\documentclass[11pt]{article}
\usepackage{amsmath,amsthm,amsfonts,amsthm,amssymb, mathtools,longtable,multirow,subfig}
\usepackage{mathrsfs,array,graphicx,color,latexsym,tikz,calc,pifont,natbib}
\usetikzlibrary{shadows,patterns,arrows,decorations.pathreplacing}
\usetikzlibrary{calc,arrows.meta,arrows,decorations.markings,chains,positioning}

\setcitestyle{numbers,square}
\DeclareMathOperator{\dv}{div}
\DeclareMathOperator{\diag}{diag}
\DeclareMathOperator{\curl}{curl}
\DeclareMathOperator{\grad}{grad}
\DeclareMathOperator{\Sp}{Sp}
\DeclareMathOperator{\Line}{Line}
\DeclareMathOperator{\sgn}{sgn}

\theoremstyle{plain}
\newtheorem{thm}{\bf Theorem}[subsection]
\newtheorem{cor}[thm]{\bf Corollary}

\theoremstyle{definition}
\newtheorem{re}[thm]{\bf Remark}
\newtheorem{lem}[thm]{\bf Lemma}
\newtheorem{prop}[thm]{\bf Proposition}
\newtheorem{problem}[thm]{\bf Problem}
\newtheorem{examplex}[thm]{\bf Example}

\newcommand{\field}[1]{\mathbb{#1}}
\newcommand{\Z}{\field{Z}}

\renewenvironment{abstract}{\par\noindent\textbf{\abstractname.}\ \ignorespaces}{\par\medskip}
\providecommand{\keywords}[1]{\small \textbf{\textit{Keywords.}} #1}
\providecommand{\ams}[1]{\small \textbf{\textit{AMS subject classifications.}} #1}

\title{\bf \Large Helmholzian Spectra of Graphs:  Novel Properties\vspace{-0.5em}}

\author{
{\small   Lu Lu$^a$, \ \ Yongtang Shi$^b$, \ \ Zoran Stani\'c$^{c}$, \ \ Jianfeng Wang$^{d,}$\footnote{Corresponding author.\newline{Email addresses:} lulugdmath@163.com (L. Lu), shi@nankai.edu.cn (Y.T. Shi), zstanic@matf.bg.ac.rs (Z. Stani\' c), jfwang@aliyun.com (J.F. Wang), wangy@ahu.edu.cn (Y. Wang).}\;, \ \ Yi Wang$^e$ }\\[2mm]
\footnotesize $^a$School of Mathematics and Statistics, HNP-LAMA, Central South University,
Changsha 410083, China\\
\footnotesize $^b$Center for Combinatorics and LPMC, Nankai University, Tianjin 300071, China\\
\footnotesize $^c$Faculty of Mathematics, University of Belgrade, Studentski trg 16, Belgrade, Serbia\\
\footnotesize $^d$School of Mathematics and Statistics, Shandong University of Technology, Zibo 255049, China\\
\footnotesize $^e$School of Mathematics, Anhui University, Hefei 230039, China\\}

\date{}

\begin{document}
\maketitle

\begin{abstract}
Let $\grad$, $\curl$, and $\dv$ be the graph-theoretic analogues of the gradient, curl, and divergence operators from multivariate calculus. The graph Laplacian $-\dv \grad$ gives rise to the celebrated Laplacian matrix, while the matrix representation of the graph Helmholtzian $\grad \grad^* + \curl^* \curl$ is called the Helmholtzian matrix. In this paper, we present a new graph-theoretic proof that the Helmholtzian matrix indeed represents the graph Helmholtzian. We then investigate the spectral properties of this matrix. Our main results are as follows: (i) a classification of graphs having exactly two distinct Helmholtzian eigenvalues; (ii) the nullity of the Helmholtzian matrix; and (iii) a combinatorial interpretation of the coefficients of the Helmholtzian polynomial. Furthermore, we determine the Helmholtzian spectrum for certain graph products and characterize Helmholtzian integral graphs, as well as derive bounds for the smallest Helmholtzian eigenvalue. Meanwhile, we pose some open problems for future research.\\[2mm]
\end{abstract}

{\noindent}\keywords Helmholtzian matrix; Graph spectra; Signed graph; Hodge Laplacians; Small world network\\

{\noindent}\ams 05C50, 39A12, 05C22, 05C20, 05C82.

%\tableofcontents

\section{Introduction}

In the majority of the paper the considered graphs are undirected and simple. For such a graph $G = (V(G),E(G))$,  two vertices $u$ and $v$ are {\it adjacent} if the pair $u, v$ features as an edge (that is, $uv\in E(G)$), and these vertices are recognized as {\it neighbours}. The {\it degree} of a vertex $v$, denoted by $d_G(v)$, is the number of its neighbours; in this and similar notations, we suppress the subscript $G$ whenever it is clear from the context. Let $M(G)$ be a prescribed matrix associated with $G$. The {\it $M$-eigenvalues} of $G$ are the eigenvalues of $M$, and the {\it $M$-spectrum} of $G$ is the multiset of its $M$-eigenvalues. At this moment, we restrict ourselves to the adjacency matrix and the Laplacian matrix. For definitions of other graph matrices that  sporadically appear in the forthcoming text, we refer the reader to \cite{cve-row-sim-book,cve-doob-sachs-book,ifge}. The \textit{adjacency matrix}, denoted by $A(G) = (a_{ij})$, has rows and columns indexed by the vertices, along with $a_{ij} =1$ if $v_iv_j \in E(G)$ and $a_{ij} =0$ otherwise. The \textit{Laplacian matrix} is defined as $L(G) = D(G)-A(G)$ where $D(G) = \diag(d(v_1),d(v_2),\ldots,d(v_n))$; the diagonal matrix of vertex degrees.

As is well-known, the spectral graph theory plays an important  role in many areas including computer science, chemistry, physics, biology, electrical engineering, along with many branches of mathematics \cite{cve-row-sim-book,cve-doob-sachs-book,cve-sim,dam-koo,ran-nov-pla}. It is also recognized in contemporary disciplines including neural networks~\cite{wu-IEEE},  complex networks \cite{mie}, social media~\cite{Zas}, and others. For examples, the original algorithm `PageRank' of Google's search engine was designed on the basis of spectra of the so-called Google matrices that are actually matrices associated with directed graphs. Recently, the Sensitivity Conjecture  in theoretical computer science was completely resolved in terms of spectra of the adjacency matrix of the so-called signed graphs~\cite{huang}. The problem on equiangular lines in prescribed Euclidean spaces was settled with a key strategy based on the spectrum of the adjacency matrix of associated graphs having a sublinear  multiplicity of the second largest eigenvalue~\cite{jiang-zhao}. To solve Chowla conjecture in the Number theory, Helfgott and Radziwi{\l}{\l} \cite{hel-rad} have modified specified graphs constructed in \cite{mrtao,tao} and link them to graphs with a high expanding property relative to the adjacency matrix. In short, different graph spectra play different roles when studying different problems. For background on Hodge theory and networks, see Section 4.

In this paper, we consider a certain generalization of the Laplacian matrix.  For our goals, we firstly introduce three operators on functions on a graph $G$.  A {\it $k$-clique} of $G$ is the vertex subset
$$
\mathcal{K}_k(G)=\{v_{i_1},v_{i_2},\ldots,v_{i_k}\} \quad \mbox{such that} \quad v_{i_p}v_{i_q} \in E(G),\quad \mbox{for}\quad 1 \leq p < q  \leq k.
$$
For convince, set $\mathcal{K}_0= V(G) \coloneqq V$, $\mathcal{K}_1 = E(G) \coloneqq E$ and $\mathcal{K}_2 = T(G) \coloneqq T$, the set of 3-cliques or triangles. A real function $f$ on $V$ is $f: V \rightarrow \mathbb{R}$. An alternation function $X: V\times V \rightarrow \mathbb{R}$ is a real function on $E$ such that
$$
X(i,j)=
\begin{cases}
-X(j,i), & \mbox{if $\{v_i,v_j\}\in E$}\\
0, & \mbox{otherwise}.
\end{cases}
$$
An alternating function $\Phi: V \times V \times V \rightarrow \mathbb{R}$ is a real function on $T$ such that
$$
\Phi(i,j,k)=\Phi(j,k,i)=\Phi(k,i,j)=
\begin{cases}
-\Phi(j,i,k)=-\Phi(i,k,j)=-\Phi(k,j,i), & \mbox{if $\{v_i,v_j,v_k\}\in T$}\\
0, & \mbox{otherwise}.
\end{cases}
$$
Equipping  the inner product on functions $f$, $X$ and $\Phi$:
\[\langle f,g\rangle_V=\sum_{i\in V}f(i)g(i),\;\langle X,Y\rangle_E=\sum_{i\le j}X(i,j)Y(i,j) \;\; {\mbox{and}} \;\;
\langle \Phi,\Psi\rangle_T=\sum_{i<j<k}\Phi(i,j,k)\Psi(i,j,k),\]
we obtain the corresponding Hilbert spaces $L^2(V)$, $L^2_{\wedge}(E)$ and
$L^2_{\wedge}(T)$, respectively. Then  the {\it gradient} operator  $\grad\colon L^2_{\wedge}(V)\longrightarrow
L^2_{\wedge}(E)$ is defined as
$$(\grad~f)(i,j)=
\begin{cases}
f(j)-f(i) & \mbox{if $\{v_i,v_j\}\in E$},\\
 0, & \mbox{otherwise}.
\end{cases}
$$
The {\it curl} operator $\curl\colon L^2_{\wedge}(E)\longrightarrow L^2_{\wedge}(T)$ is defined as
$$
(\curl~X)(i,j,k)=
\begin{cases}
X(i,j)+X(j,k)+X(k,i) & \mbox{if $\{v_i,v_j,v_k\}\in T$},\\
0, & \mbox{otherwise}.
\end{cases}
$$
The {\it divergence} operators $\dv\colon L^2_{\wedge}(E) \rightarrow  L^2(V)$ is defined as $$(\dv~X)(i) =\sum^n_{j=1}\frac{w_{ij}}{w_i}X(i,j)$$ for all $i \in  V$. Note that the above three operators are respectively the graph-theoretic analogues of $\grad,\curl$ and $\dv$ in multivariate calculus; while the functions $f$, $X$ and $\Phi$ are called 0-, 1-, 2-cochains in topological parlance, which are discrete analogues of differential forms on manifolds \cite{warner}.

As known, the graph Laplacian $\Delta_0 =  -\dv\,\grad$ is a graph-theoretic analogue of Laplace operator (or scalar Laplacian) operating a scalar function, which gives us the celebrated Laplacian matrix $L(G)$ (see \cite[eg.]{Hodge-lim}). Emphatically, our concern is the graph Helmholtzian defined as
\begin{equation*}\label{hodge-H1}
\mathcal{H}\coloneqq\Delta_1= \grad\; \grad^* + \curl^*\; \curl.
\end{equation*}
which is a graph-theoretic analogue of Helmholtz operator (or vector Laplacian) operating  a vector field. Later, the matrix representation $\mathcal{H}(G)$ of graph Helmholtzian was expressed as the following sum of two matrices
\begin{equation}\label{H=B+C}
\mathcal{H}(G)={\mathcal B}(G){\mathcal B}(G)^{\intercal} + {\mathcal C}(G)^{\intercal}{\mathcal C}(G),
\end{equation}
where ${\mathcal B}$ and ${\mathcal C}$ are defined in \cite[Section 5.3]{Hodge-lim} (or see Section \ref{H-matrix1}). Recently, the combinatorial expression of this matrix $\mathcal{H}(G)$, termed the {\it Helmholtzian matrix}, was derived from an algebraic topology perspective. Unlike the usual vertex-indexed graph matrices, this is an edge-indexed matrix. On this basis, the fundamental properties of Helmholtzian matrix were studied \cite{li-lu-wang}.

This paper is organized as follows. From a graph-theoretic perspective, we present a new proof in Section~\ref{H-matrix1} that the Helmholtzian matrix represents the graph Helmholtzian. In Section 3, we explore the spectral characteristics of Helmholtzian matrix through the six subsections, the novel results of which are summarized as follows:
\begin{itemize}
  \item[(i)]
A classification of graphs with exactly two distinct Helmholtzian eigenvalues;
  \item[(ii)]
The nullity of Helmholtzian matrix;
  \item[(iii)]
A combinatorial interpretation of coefficients of the Helmholtzian polynomial.
\end{itemize}
In addition, we establish the Helmholtzian spectrum of  prescribed graph products, some integral graphs (i.e., graphs whose Helmholtzian spectrum consists entirely of integers) and bounds for the least Helmholtzian eigenvalue. Finally, some problems for further study are posed in the related sections. Finally, more motivations and potential applications are introduced in Section 4.

\section{A Graph-theoretic Proof for Helmholtzian Matrix}\label{H-matrix1}

In order to introduce the  Helmholtzian matrix of a graph, we first need some necessary terminology and notation. For any orientation on the edges and triangles of an ordinary graph $G$, the {\it tail} and the {\it head} of an oriented edge $e$ are designated by $e^-$ and $e^+$, respectively. We set $\mathcal{V}(e)=\{e^-,e^+\}$. If there is an oriented edge from $u$ to $v$, then we write
$u\rightarrow v$. If  $u$ and $v$ are adjacent, then we write $u\sim v$, and $u
\nsim v$ otherwise.  Therefore, $u\sim v$
implies either $u\rightarrow v$ or $v\rightarrow u$. If a vertex $v$ satisfies $v\in
\mathcal{V}(e)$, then we write $v \in e$. Moreover, we set $v\rightarrow e$ if $v=e^-$, and
$e\rightarrow v$ if $v=e^+$. For two edges $e_1$ and $e_2$, we write $e_1\sim e_2$ if
$\mathcal{V}(e_1)\cap
\mathcal{V}(e_2)\ne\emptyset$. Also, we write
$e_1\rightarrow e_2$ if $e_1^+=e_2^-$, $e_1\overset{+}{\sim}e_2$ if $e_1^+=e_2^+$, and
$e_1\overset{-}{\sim}e_2$   if $e_1^-=e_2^-$. Further, $e_1\leftrightarrow e_2$ means that
either $e_1\rightarrow e_2$ or $e_2 \rightarrow e_1$,   $e_1\overset{\pm}{\sim}e_2$ means that
either $e_1\overset{+}{\sim}e_2$ or $e_1\overset{-}{\sim}e_2$, and  $e_1\vartriangle e_2$ means that $e_1$
and $e_2$ are in the same triangle. For an edge $e$ and a triangle
$\vartriangle$, we write $e\in\vartriangle$ if $e$ is an edge of $\vartriangle$. Moreover, if
the orientation of $e$ agrees with that of $\vartriangle$, then we write
$e\in\vartriangle^+$, and $e\in\vartriangle^-$ otherwise. To make the symbols more clear, we
collect them in Table~\ref{tab-1}; they will be frequently used in the entire paper.

\begin{table}[h]
		\centering
		\begin{minipage}{0.48\textwidth}
			\centering
			\begin{tabular}{|c|c|c|c|}
				\hline
				\multicolumn{3}{|c|}{Symbol} & Diagram \\
				\hline
				\multirow{2}*{$u\sim v$} & \multicolumn{2}{c|}{$u\rightarrow v$} &
				\includegraphics[scale=0.6]{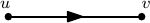}  \\[1.6mm]
				\cline{2-4}
				~ & \multicolumn{2}{c|}{$v\rightarrow u$} & \includegraphics[scale=0.6]{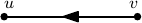}  \\[1.6mm]
				\hline
				\multirow{2}*{$u\in e$} & \multicolumn{2}{c|}{$u\rightarrow e$~($u=e^-$)} &
				\includegraphics[scale=0.6]{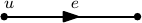}  \\[1.6mm]
				\cline{2-4}
				~ & \multicolumn{2}{c|}{$e\rightarrow u$~($u=e^+$)} & \includegraphics[scale=0.6]{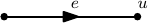}  \\[1.6mm]
				\hline
				\multirow{4}*{$e_1\sim e_2$} & \multirow{2}*{$e_1\overset{\pm}{\sim}e_2$} &
				$e_1\overset{+}{\sim}e_2$ & \includegraphics[scale=0.6]{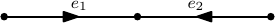}  \\[1.6mm]
				\cline{3-4}
				~ & ~ & $e_1\overset{-}{\sim}e_2$ & \includegraphics[scale=0.6]{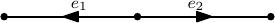} \\[1.6mm]
				\cline{2-4}
				~& \multirow{2}*{$e_1\leftrightarrow e_2$} & $e_1\rightarrow e_2$ &
				\includegraphics[scale=0.6]{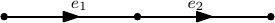}\\[1.6mm]
				\cline{3-4}
				~ & ~ & $e_2\rightarrow e_1$ &  \includegraphics[scale=0.6]{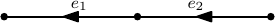}\\[1.6mm]
				\hline
			\end{tabular}
		\end{minipage}
		\hfill
		\begin{minipage}{0.48\textwidth}
			\centering
			\begin{tabular}{|c|c|c|c|}
				\hline
				\multicolumn{3}{|c|}{Symbol} & Diagram \\
				\hline
				\multicolumn{3}{|c|}{$e_1\vartriangle e_2$} &
				\raisebox{-.5\height}{\includegraphics[scale=0.65]{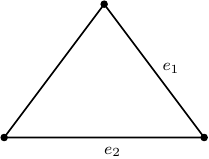}}\\[1.6mm]
				\hline
				\multirow{2}*{$e\in\vartriangle$} & \multicolumn{2}{c|}{$e\in\vartriangle^+$} &
				\raisebox{-.5\height}{\includegraphics[scale=0.75]{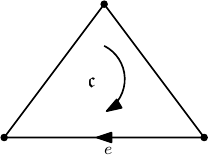}}\\[1.6mm]
				\cline{2-4}
				~ & \multicolumn{2}{c|}{$e\in\vartriangle^-$} &
				\raisebox{-.5\height}{\includegraphics[scale=0.75]{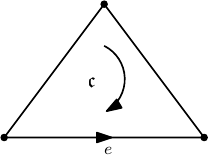}} \\[1.6mm]
				\hline
			\end{tabular}
		\end{minipage}
\caption{Relations between vertices, edges and triangles.}\label{tab-1}
	\end{table}

For each vertex $u \in V(G)$, the {\it in-neighbourhood}, the {\it out-neighbourhood} and
the {\it neighbourhood}  are  defined as
$$N_G^+(u)=\{v\in V\mid v\rightarrow u\},\, N_G^-(u)=\{v\in V\mid u\rightarrow v\}~
\text{and}~  N_G(u)=N_G^+(u)\cup N_G^-(u),$$ respectively. The cardinalities $d_G^+(u)=|N_G^+(u)|$,
$d_G^-(u)=|N_G^-(u)|$ and $d_G(u)=|N_G(u)|$ are called the {\it in-degree}, the {\it
out-degree} and the {\it degree} of $u$. For an edge $e \in E(G)$, the {\it triangle degree of $e$}, denoted by $\triangle_G(e),$ is the number of triangles containing $e$, that is,
$$\triangle_G(e)=|\{\vartriangle\in T(G)\mid e\in \vartriangle\}|,$$ which is also denoted by $\triangle(e)$ when $G$ is unambiguous. As usual, we
write $O_{x\times y}$, $J_{x\times y}$, $I_{x}$, $\mathbf{0}_x$ $\mathbf{1}_x$ and  for the all-zero
matrix, the all-one matrix, the identity matrix, the all-0 column vector and the all-1 column vector of the corresponding size, respectively.
The subscript may be omitted. The {\it edge-vertex incidence matrix}
$\mathcal{B}=\mathcal{B}(G)=(b_{ev})_{m\times n}$ and the {\it triangle-edge incidence matrix}
$\mathcal{C}=\mathcal{C}(G)=(c_{\vartriangle e})_{t\times m}$ are defined as
\begin{equation*}\label{B-C}
b_{ev}=
\begin{cases}
-1,& v\rightarrow e\\
1,&e\rightarrow v\\
0,&\text{otherwise}
\end{cases} \;\;\text{and}\;\; c_{\vartriangle e}=
\begin{cases}-1,&
e\in\vartriangle^-\\
1,&e\in\vartriangle^+\\
0,&\text{otherwise}.
\end{cases}
\end{equation*}

\begin{lem}\label{lem-1}
The operator $\grad\grad^*$ is represented by the matrix ${\mathcal B}{\mathcal B}^{\intercal}$, and the operator $\curl^*\curl$ is represented by the matrix ${\mathcal C}^{\intercal}{\mathcal C}$.
\end{lem}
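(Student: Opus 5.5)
We fix the orientation used to define $\mathcal{B}$ and $\mathcal{C}$ and use it to choose coordinates for the three Hilbert spaces. An alternating function $X\in L^2_\wedge(E)$ is determined by its values $X(e^-,e^+)$, one per oriented edge $e$. Likewise $\Phi\in L^2_\wedge(T)$ is determined by one value per oriented triangle, namely its value on a cyclic ordering agreeing with the chosen orientation. With the inner products as defined, the indicator cochains of oriented edges, oriented triangles and vertices form orthonormal bases of $L^2_\wedge(E)$, $L^2_\wedge(T)$ and $L^2(V)$. The plan is to compute the matrices of $\grad$ and $\curl$ in these bases, then take adjoints: in an orthonormal basis the adjoint of an operator is represented by the transpose of its matrix. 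Composing then gives $\mathcal{B}\mathcal{B}^{\intercal}$ and $\mathcal{C}^{\intercal}\mathcal{C}$.

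\textbf{Step 1 (gradient).} For an oriented edge $e$ we have $(\grad f)(e^-,e^+)=f(e^+)-f(e^-)$. By the definition of $b_{ev}$ this equals $\sum_v b_{ev} f(v)$, since $b_{ev}=1$ when $e\rightarrow v$ and $b_{ev}=-1$ when $v\rightarrow e$. So $\grad$ is represented by the $m\times n$ matrix $\mathcal{B}$, and $\grad^*$ by $\mathcal{B}^{\intercal}$. As a cross-check, $\langle \grad f,X\rangle_E=\sum_e (f(e^+)-f(e^-))X(e)$ can be regrouped by vertices, giving $(\grad^*X)(v)=\sum_{e\rightarrow v}X(e)-\sum_{v\rightarrow e}X(e)$, which is $(\mathcal{B}^{\intercal}X)_v$. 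Hence $\grad\grad^*$ is represented by $\mathcal{B}\mathcal{B}^{\intercal}$. This matches the Laplacian relation $\mathcal{B}^{\intercal}\mathcal{B}=L(G)$, consistent with $\Delta_0=\grad^*\grad$ up to the normalization of $\dv$.

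\textbf{Step 2 (curl).} Take an oriented triangle $\vartriangle=(i,j,k)$. Then $(\curl X)(i,j,k)=X(i,j)+X(j,k)+X(k,i)$. Each term is $X(e)$ for an edge $e$ of the triangle, with sign $+1$ if the orientation of $e$ agrees with the cyclic order ($e\in\vartriangle^+$) and $-1$ otherwise, by antisymmetry of $X$. This is exactly $\sum_e c_{\vartriangle e}X(e)$, so $\curl$ is represented by $\mathcal{C}$, $\curl^*$ by $\mathcal{C}^{\intercal}$, and $\curl^*\curl$ by $\mathcal{C}^{\intercal}\mathcal{C}$.

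\textbf{Main obstacle.} The delicate point is the bookkeeping of the inner products. The sum $\sum_{i\le j}$ in $\langle\cdot,\cdot\rangle_E$ counts each edge once, but in the order $i<j$ rather than the chosen orientation. Similarly $\sum_{i<j<k}$ in $\langle\cdot,\cdot\rangle_T$ counts each triangle once, in increasing order. I must check that these agree with the orientation-based coordinates. For edges, $X(i,j)Y(i,j)=X(j,i)Y(j,i)$, since reversing an edge flips the sign of both factors. For triangles, every permutation of $(i,j,k)$ multiplies both $\Phi$ and $\Psi$ by the same sign. So the product is independent of the representative ordering, and the indicator bases are genuinely orthonormal. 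Once this is verified, the transpose-equals-adjoint argument completes the proof. Finally, different orientations change $\mathcal{B}$ and $\mathcal{C}$ only by conjugation with a diagonal $\pm1$ matrix, so the representation is well defined up to this signature equivalence.
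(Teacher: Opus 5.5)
Your proposal is correct and follows essentially the same route as the paper: fix orientations, use the indicator cochains $\delta_v,\delta_e,\delta_{\vartriangle}$ as orthonormal bases, show that $\grad$ and $\curl$ have matrices $\mathcal{B}$ and $\mathcal{C}$ (the paper computes the entries as $\langle \grad\delta_v,\delta_e\rangle_E$ and $\langle \curl\delta_e,\delta_{\vartriangle}\rangle_T$, whereas you read off coordinates directly), and then use that adjoints are represented by transposes. Your check that the ordered sums $\sum_{i\le j}$ and $\sum_{i<j<k}$ agree with the orientation-based coordinates supplies a detail the paper only asserts as ``clearly''.
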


\begin{proof}
Assign $E(G)$ and $T(G)$ arbitrary orientations. For any $v\in V(G)$, let $\delta_v\in
L^2_{\wedge}(V)$ be the function, such that $\delta_v(x)=1$ if $x=v$ and $0$ otherwise. For any edge
$e\in E(G)$, let $\delta_{e}\in L^2_{\wedge}(E)$ denote the function such that
$$\delta_e(i,j)=-\delta_e(j,i)=1$$ if $e=\{i,j\}$ and $0$ otherwise. For any
$\vartriangle\in T(G)$, set $\delta_{\vartriangle}\in L^2_{\wedge}(T)$ to be the function such that
$$\delta_{\vartriangle}(i,j,k)=\delta_{\vartriangle}(j,k,i)=\delta_{\vartriangle}(k,i,j)
=-\delta_{\vartriangle}(j,i,k)=-\delta_{\vartriangle}(i,k,j)=-\delta_{\vartriangle}(k,j,i)=1$$
if $\vartriangle= \{i,j,k\}$ and $0$ otherwise. Clearly, $\{\delta_v\mid v\in V(G)\}$,
$\{\delta_e\mid e\in E(G)\}$ and $\{\delta_{\vartriangle}\mid\vartriangle\in T(G)\}$ are
orthonormal basis of $L^2_{\wedge}(V)$, $L^2_{\wedge}(E)$ and $L^2_{\wedge}(T)$, respectively. Assume that ${\rm grad}$
gives the matrix $\mathcal{X}$ and ${\rm curl}$ gives the matrix ${\mathcal{Y}}$ under these basis. It suffices to
show that $\mathcal{X}=\mathcal{B}$ and $\mathcal{Y}={\mathcal C}$, since $f^{\ast}$ gives the matrix $F^*$ if $f$ gives the matrix~$F$.

Note that the $(e,v)$-entry of $\mathcal{X}$ is $$\mathcal{X}_{e,v}=\langle
\operatorname{grad}\delta_v,\delta_e\rangle_E=\sum_{i\le
j}(\operatorname{grad}\delta_v)(i,j)\delta_e(i,j)=\delta_v(e^+)-\delta_v(e^-),$$ which yields that
$\mathcal{X}_{e,v}=1$ if $v=e^+$, $-1$ if $v=e^-$ and $0$ otherwise, and therefore $\mathcal{X}=\mathcal{B}$.

Similarly, the $(\vartriangle,e)$-entry of $\mathcal{Y}$ is $$\mathcal{Y}_{\vartriangle,e}=\langle
{\rm curl}~\delta_e,\delta_{\vartriangle}\rangle_T=\sum_{x<y<z}({\rm curl}~\delta_e)(x,y,z)
\delta_{\vartriangle}(x,y,z) =\delta_e(i,j)+\delta_e(j,k)+\delta_e(k,i),$$ where
$\vartriangle=(i,j,k)$. Hence, if $e\in \vartriangle^+$ then $e\in\{(i,j),(j,k),(k,i)\}$,
and thus $\mathcal{Y}_{\vartriangle,e}=1$; if $e\in\vartriangle^-$ then $e\in\{(j,i),(k,j),(i,k)\}$, and
thus $\mathcal{Y}_{\vartriangle,e}=-1$; if $e\not\in\vartriangle$ then $\mathcal{Y}_{\vartriangle,e}=0$. Thereby,
$\mathcal{Y}=\mathcal{C}$.
\end{proof}

Note, Li and two authors of this paper \cite{li-lu-wang} obtained the following matrix representation of graph Helmholtzian by the {\it combinatorial Laplacian matrix} defined on simplicial complex, whose properties are studied from the perspective of algebraic topology by Goldberg \cite{god-roy}. For the self-containedness of the paper, we  provide a graph-theoretic proof.

\begin{thm}\label{H-def}
Let $G$ be a graph with orientations on its edge set $E$ and triangle set $T$. Then the Helmholtzian matrix $\mathcal{H}(G)=(h_{ee'})$ of $G$ is defined as follows:
$$
h_{ee'}=
\begin{cases}
\triangle(e)+2, & \mbox{if $e'=e$}\\
-1, & \mbox{if $e\leftrightarrow e'$ and $e\not\vartriangle e'$}\\
1, & \mbox{if $e'\overset{\pm}{\sim} e$ and $e\not\vartriangle e'$} \\
0, & \mbox{otherwise}.
\end{cases}
$$
\end{thm}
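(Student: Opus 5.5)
By Lemma~\ref{lem-1} and \eqref{H=B+C}, $\mathcal{H}(G)=\mathcal{B}\mathcal{B}^{\intercal}+\mathcal{C}^{\intercal}\mathcal{C}$. The plan is to compute the $(e,e')$-entry of each summand combinatorially, using the orientation relations of Table~\ref{tab-1}, and then add them case by case.

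\textbf{First summand.} Here $(\mathcal{B}\mathcal{B}^{\intercal})_{ee'}=\sum_{v} b_{ev}b_{e'v}$, and only common endpoints of $e$ and $e'$ contribute.
\begin{itemize}
\item For $e=e'$ the sum is $(-1)^2+1^2=2$.
\item For $e\ne e'$ in a simple graph there is at most one common vertex $v$.
\begin{itemize}
\item If $e\overset{+}{\sim}e'$ then $v$ is the head of both, so the entry is $1\cdot1=1$. If $e\overset{-}{\sim}e'$ it is $(-1)(-1)=1$.
\item If $e\leftrightarrow e'$, then $v$ is the head of one edge and the tail of the other, so the entry is $-1$.
\item If $e\nsim e'$ the entry is $0$.
\end{itemize}
\end{itemize}

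\textbf{Second summand.} Here $(\mathcal{C}^{\intercal}\mathcal{C})_{ee'}=\sum_{\vartriangle} c_{\vartriangle e}c_{\vartriangle e'}$, summing over triangles containing both edges.
\begin{itemize}
\item For $e=e'$ each triangle containing $e$ contributes $(\pm1)^2=1$, giving $\triangle(e)$.
\item For $e\ne e'$, two distinct edges lie in at most one common triangle, namely the one on $\mathcal{V}(e)\cup\mathcal{V}(e')$, and only if that triangle exists, i.e.\ $e\vartriangle e'$. If $e\not\vartriangle e'$ the entry is $0$.
\item If $e\vartriangle e'$, let $\vartriangle=(i,j,k)$ with cyclic orientation. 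Write the shared vertex as $v$, so $e=\{u,v\}$, $e'=\{v,w\}$. Traversing the triangle, one of $e,e'$ is entered at $v$ and the other is left at $v$.
\begin{itemize}
\item If $e\leftrightarrow e'$ (consistent head-to-tail at $v$), both edges agree with the same cyclic orientation or both disagree. Then $c_{\vartriangle e}=c_{\vartriangle e'}$ and the product is $+1$.
\item If $e\overset{\pm}{\sim}e'$, exactly one edge agrees with the triangle's orientation, and the product is $-1$.
\end{itemize}
I would verify this sign claim by fixing $\vartriangle=(u,v,w)$ and checking the four orientation combinations of $e,e'$ directly. This is the only step requiring care, and it does not depend on the chosen triangle orientation, since reversing it flips both signs.
\end{itemize}

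\textbf{Summing.} Adding the two summands gives:
\begin{itemize}
\item the diagonal entry $\triangle(e)+2$;
\item for $e\leftrightarrow e'$: $-1$ if $e\not\vartriangle e'$, and $-1+1=0$ if $e\vartriangle e'$;
\item for $e\overset{\pm}{\sim}e'$: $1$ if $e\not\vartriangle e'$, and $1-1=0$ if $e\vartriangle e'$;
\item for non-adjacent edges: $0$.
\end{itemize}
This is exactly the case description in Theorem~\ref{H-def}.

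The main obstacle is the sign bookkeeping in the triangle case. In particular, one must ensure the cancellation occurs precisely when $e\vartriangle e'$, which requires that adjacent edges share at most one triangle. That holds because the third vertex of such a triangle is forced.
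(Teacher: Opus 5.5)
Your proposal is correct and follows the paper's own route. Like the paper, you expand $\mathcal{H}=\mathcal{B}\mathcal{B}^{\intercal}+\mathcal{C}^{\intercal}\mathcal{C}$ entrywise via Lemma~\ref{lem-1}, note that only common endpoints and common triangles contribute, and split into the cases $e=e'$, $e\nsim e'$, and $e\sim e'$ (the last by $\leftrightarrow$ versus $\overset{\pm}{\sim}$ and $\vartriangle$ versus $\not\vartriangle$). Your four-case check of the sign of $c_{\vartriangle e}c_{\vartriangle e'}$, and your remark that two adjacent edges share at most one triangle, fill in details the paper only asserts.
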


\begin{proof}
By employing Lemma \ref{lem-1} and \eqref{H=B+C}, a direct computation shows that the $(e,e')$-entry of $\mathcal{H}(G)$ is given as
\[\begin{array}{lll}
\mathcal{H}_{e,e'}&=&(\mathcal{B}\mathcal{B}^{\intercal})_{e,e'}+(\mathcal{C}^{\intercal}\mathcal{C})_{e,e'}\\[2mm]
&=&\sum_{v\in V(G)}\mathcal{B}_{e,v}\mathcal{B}^{\intercal}_{v,e'}+\sum_{\vartriangle\in T(G)}(\mathcal{C}^{\intercal})_{e,\vartriangle}\mathcal{C}_{\vartriangle,e'}\\[2mm]
&=&\sum_{v\in V(G)}\mathcal{B}_{e,v}\mathcal{B}_{e',v}+\sum_{\vartriangle\in T(G)}\mathcal{C}_{\vartriangle,e}\mathcal{C}_{\vartriangle,e'}\\[2mm]
&=&\sum_{v\in \mathcal{V}(e)\cap \mathcal{V}(e')}\mathcal{B}_{e,v}\mathcal{B}_{e',v}+\sum_{e,e' \in \vartriangle}\mathcal{C}_{\vartriangle,e}\mathcal{C}_{\vartriangle,e'}.
\end{array}\]
If $e=e'$, then $$\sum_{v\in \mathcal{V}(e)\cap
\mathcal{V}(e')}\mathcal{B}_{e,v}\mathcal{B}_{e',v}=\sum_{v\in \mathcal{V}(e)}\mathcal{B}_{e,v}^2=2 \;\; \mbox{and}\;\sum_{e,e' \in \vartriangle
}\mathcal{C}_{\vartriangle,e}\mathcal{C}_{\vartriangle,e'}=\sum_{e \in \vartriangle}\mathcal{C}_{\vartriangle,e}^2=\triangle(e),$$
which results in $\mathcal{H}_{e,e'}=\triangle(e)+2$. If $e\not\sim e'$, then
$$\sum_{v\in \mathcal{V}(e)\cap \mathcal{V}(e')}\mathcal{B}_{e,v}\mathcal{B}_{e',v}=0 \;\; \mbox{and}\;\sum_{e,e' \in \vartriangle}\mathcal{C}_{\vartriangle,e}\mathcal{C}_{\vartriangle,e'}=0,$$
which shows $\mathcal{H}_{e,e'}=0$. It remains to consider the case $e\sim e'$. If $e\vartriangle e'$
and $e\leftrightarrow e'$, say $e,e'\in\vartriangle_0$ and $e\rightarrow e'$, then
$$\sum_{v\in \mathcal{V}(e)\cap \mathcal{V}(e')}\mathcal{B}_{e,v}\mathcal{B}_{e',v}=\mathcal{B}_{e,e^+}\mathcal{B}_{e',{e'}^-}=-1\;\;
\mbox{and}\;\sum_{e,e' \in \vartriangle
}\mathcal{C}_{\vartriangle,e}\mathcal{C}_{\vartriangle,e'}=\mathcal{C}_{\vartriangle_0,e}
\mathcal{C}_{\vartriangle_0,e'}=1$$
 indicating  $\mathcal{H}_{e,e'}=0$. If $e \vartriangle e'$ and $e\overset{\pm}{\sim} e'$, say
$e,e'\in\vartriangle_1$ and $e\overset{+}{\sim}e'$, then
$$\sum_{v\in \mathcal{V}(e)\cap \mathcal{V}(e')}\mathcal{B}_{e,v}\mathcal{B}_{e',v}=\mathcal{B}_{e,e^+}\mathcal{B}_{e',{e'}^+}=1\;\;
\mbox{and}\sum_{e,e' \in \vartriangle
}\mathcal{C}_{\vartriangle,e}\mathcal{C}_{\vartriangle,e'}=\mathcal{C}_{\vartriangle_1,e}
\mathcal{C}_{\vartriangle_1,e'}=-1,$$ which yields
$\mathcal{H}_{e,e'}=0$. If $e\not\vartriangle e'$ and $e \leftrightarrow e'$, say $e\rightarrow e'$,
then $$\sum_{v\in \mathcal{V}(e)\cap \mathcal{V}(e')}\mathcal{B}_{e,v}\mathcal{B}_{e',v}=\mathcal{B}_{e,e^+}\mathcal{B}_{e',{e'}^-}=-1\;\; \mbox{and}\sum_{e,e' \in \vartriangle}\mathcal{C}_{\vartriangle,e}\mathcal{C}_{\vartriangle,e'}=0,$$ which arrives at
$\mathcal{H}_{e,e'}=-1$. If $e\not\vartriangle e'$ and $e\overset{\pm}{\sim} e'$, say
$e\overset{+}{\sim}e'$, then
$$\sum_{v\in \mathcal{V}(e)\cap \mathcal{V}(e')}\mathcal{B}_{e,v}\mathcal{B}_{e',v}=\mathcal{B}_{e,e^+}\mathcal{B}_{e',{e'}^+}=1\;\; \mbox{and}\;
\sum_{e,e' \in \vartriangle}\mathcal{C}_{\vartriangle,e}\mathcal{C}_{\vartriangle,e'}=0$$ implying
$\mathcal{H}_{e,e'}=1$.

The proof is completed.
\end{proof}

As we said, the matrix $\mathcal{H}(G)$ is called the  Helmholtzian matrix of  $G$. In contrast to striking ways from all other known graph matrices, this matrix is indexed by graph edges. By a convention, an edgeless graph is represented by an empty Helmholtzian (i.e., the $0\times 0$ matrix). Throughout the entire paper, unless told otherwise, we assume that a graph under consideration has at least one edge!

At present, some basic properties of Helmholtzian matrix have been obtained in \cite{li-lu-wang}, including the irreducibility and positive semi-definiteness of this matrix, eigenvalue interlacing  under edge additions and the relationships with the spectrum of the adjacency matrix and the Laplacian matrix of a related oriented graph, weighted graph or signed graph. Particularly, although the matrix $\mathcal{H}(G)$ leans on the orientation of edges, the eigenvalues of Helmholtzian matrix do not depend on the choice of orientation
\cite[Theorem 2.1]{lu-shi-sta-ww}, which makes the following concepts well-defined. Since $\mathcal{H}(G)$ is real, symmetric and positive semi-definite, we deduce that its eigenvalues, called {\it Helmholtzian eigenvalues} (or {\it $\mathcal{H}$-eigenvalues}) of a graph $G$, are real and non-negative. Therefore, they can be listed as
\begin{equation*}\label{H-eigen1}
\lambda_1(G)\ge\lambda_2(G)\ge\cdots\ge\lambda_m(G) \ge 0,
\end{equation*}
where $m$ is the number of edges, of course. Then the {\it Helmholtzian spectrum} (or the \textit{$\mathcal{H}$-spectrum})  $\Sp_{\mathcal{H}}(G)$ of  $G$ consists of its $\mathcal{H}$-eigenvalues.
If $\lambda_1 > \lambda_2 > \cdots > \lambda_k$ are distinct $\mathcal{H}$-eigenvalues of $G$, then the  {\it $\mathcal{H}$-spectrum} of $G$ is written as
$$
{\Sp}_{\mathcal{H}}(G) = \left\{\!\!\begin{array}{cccc}
                                 \lambda_1(G) & \lambda_2(G) & \cdots & \lambda_k(G) \\
                                  m_1  &  m_2     & \cdots &  m_k
                                \end{array}
                         \!\!\right\},
$$
where $m_i$ is multiplicity of $\lambda_i$ ($1 \leq i \leq k$).

\section{Spectral Properties of Helmholtzian Matrix}\label{sec:main}

In this section, we sequentially investigate the spectral properties of Helmholtzian matrix. Precisely, in Subsection~\ref{H48} we consider the graphs with few $\mathcal{H}$-eigenvalues. Certain upper bounds for the least $\mathcal{H}$-eigenvalue are obtained in Section \ref{H46}. In Subsection~\ref{H47} we determine the nullity of a graph,  the multiplicity of zero in the $\mathcal{H}$-spectrum. The characteristic polynomial of $\mathcal{H}(G)$ is considered in Subsection \ref{H45}. Graph products and the resulting $\mathcal{H}$-spectrum are dealt with in Subsection~\ref{H49}. In particular, we establish the Helmholtzian matrix of the join of two graphs, and compute the corresponding spectrum in regular case. This result is used in Subsection \ref{H41} where we deal with graphs whose $\mathcal{H}$-spectrum consists entirely of integers.

\subsection{Graphs with Few $\mathcal{H}$-eigenvalues}\label{H48}

In this subsection, `a graph has $s$ eigenvalues', means that $G$ has exactly $s$ distinct eigenvalues. For the adjacency matrix, the problem of graphs with few eigenvalues originated in the study of  Doob \cite{doob}. In his view {\it few} is at most five, and he has determined a class of regular graphs with five eigenvalues related to Steiner triple systems. At present, this topic has been paid much attention, and there are many results employing association schemes, designs, codes and similar structures. Afterwards, van Dam  has made important contributions in his papers and Ph.D.~thesis \cite{dam-thesis}.  In the subsection, we consider  this topic for the Helmholtzian matrix.

Let $G$ be a graph of order $n$ and size $m$. Since $\mathcal{H}(G) = \mathcal{B}\mathcal{B}^{\intercal} + \mathcal{C}^{\intercal}\mathcal{C}$ is a
real and symmetric, there exist orthonormal eigenvectors $\mathbf {x_1},\mathbf{x_2},\ldots,\mathbf{x_m}$ forming a basis of $\mathbb{R}^m$. Therefore, if
$\lambda_1\ge\lambda_2\ge\cdots\ge\lambda_m$ are the eigenvalues, then
\[\mathcal{H}(G)=\lambda_1 \mathbf{x}_1\mathbf{x}_1^{\intercal}+\lambda_2\mathbf{x}_2\mathbf{x}_2^{\intercal}+\cdots+\lambda_m\mathbf{x}_m\mathbf{x}_m^{\intercal}.\]
Moreover, if $\lambda_1>\lambda_2>\cdots>\lambda_s$ are all distinct eigenvalues, then the {\it spectral decomposition} of $\mathcal{H}(G)$ is
\[\mathcal{H}(G)=\lambda_1\mathcal{P}_1+\lambda_2\mathcal{P}_2+\cdots+\lambda_s\mathcal{P}_s,\]
where $\mathcal{P}_i$ is the orthonormal projection of $\mathbb{R}^m$ onto the eigenspace
$\mathcal{E}_{\lambda_i}$ for $1\le i\le s$. It is well known that $\mathcal{P}_i$ is represented in the following form, but for the sake of completeness we include a short proof.

\begin{lem}\label{lem-4-26}
If $\lambda_1>\lambda_2> \cdots >\lambda_s$ are all distinct eigenvalues of $\mathcal{H}(G)$, then $\mathcal{P}_i=f_i(\mathcal{H}(G))$ where $f_i(x)=\frac{\prod_{j\ne i}(x-\lambda_j)}{\prod_{j\ne i}(\lambda_i-\lambda_j)}$.
\end{lem}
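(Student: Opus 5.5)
The plan is to use the spectral decomposition written just before the statement. Since $\mathcal{H}(G)$ is real symmetric, we have $\mathcal{H}(G)=\sum_{k=1}^s\lambda_k\mathcal{P}_k$. Here the $\mathcal{P}_k$ are the orthogonal projections onto the pairwise orthogonal eigenspaces $\mathcal{E}_{\lambda_k}$, and these eigenspaces span $\mathbb{R}^m$. From this we record three identities: $\mathcal{P}_k\mathcal{P}_l=\delta_{kl}\mathcal{P}_k$, $\sum_k\mathcal{P}_k=I_m$, and, by induction on the exponent, $\mathcal{H}(G)^r=\sum_k\lambda_k^r\mathcal{P}_k$ for every $r\ge 0$.

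By linearity, the last identity gives $p(\mathcal{H}(G))=\sum_{k=1}^s p(\lambda_k)\mathcal{P}_k$ for any real polynomial $p$. We apply this with $p=f_i$. Every $f_i$ is well defined because the $\lambda_j$ are distinct, so the denominator $\prod_{j\ne i}(\lambda_i-\lambda_j)$ is nonzero. The Lagrange interpolation property then gives $f_i(\lambda_k)=0$ for $k\ne i$, since the numerator contains the factor $(x-\lambda_k)$, and $f_i(\lambda_i)=1$. Hence $f_i(\mathcal{H}(G))=\mathcal{P}_i$.

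Alternatively, we can argue on vectors. Write any $\mathbf{x}\in\mathbb{R}^m$ as $\mathbf{x}=\sum_k\mathbf{y}_k$ with $\mathbf{y}_k\in\mathcal{E}_{\lambda_k}$. Then $f_i(\mathcal{H}(G))\mathbf{y}_k=f_i(\lambda_k)\mathbf{y}_k$, which equals $\mathbf{y}_i$ when $k=i$ and $\mathbf{0}$ otherwise, and this is exactly $\mathcal{P}_i\mathbf{x}$.

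There is no real obstacle. The only point needing care is justifying $p(\mathcal{H}(G))=\sum_k p(\lambda_k)\mathcal{P}_k$, and that rests entirely on the orthogonality and idempotence of the projections. The degenerate case $s=1$ is consistent with this: $f_1\equiv 1$ as an empty product, and $\mathcal{P}_1=I$.
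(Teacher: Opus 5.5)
Your proof is correct and is essentially the paper's argument: the paper writes $\mathbf{x}=\sum_j\mathbf{x}_j$ with $\mathbf{x}_j\in\mathcal{E}_{\lambda_j}$ and uses $f_i(\lambda_j)=\delta_{ij}$ to get $f_i(\mathcal{H}(G))\mathbf{x}=\mathbf{x}_i$, which is exactly your vector-level alternative. Your matrix-level version via $p(\mathcal{H}(G))=\sum_k p(\lambda_k)\mathcal{P}_k$ is the same computation in different form. It identifies $f_i(\mathcal{H}(G))$ with the orthogonal projection a little more directly than the paper's stated criterion, which is range contained in $\mathcal{E}_{\lambda_i}$ together with idempotence.
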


\begin{proof}
For convenience, denote  $\mathcal{H}=\mathcal{H}(G)$ and $F_i=f_i(\mathcal{H}(G))$. It suffices to show that $F_i\mathbf{x}\in\mathcal{E}_{\lambda_i}$ for every $\mathbf{x}\in\mathbb{R}^m$ and $F_i^2=F_i$, where $\mathcal{E}_{\lambda_i}$ is the eigenspace of $\lambda_i$. It is obvious that, for every $\mathbf{y}\in\mathcal{E}_{\lambda_j}$, $F_i\mathbf{y}=\mathbf{y}$ if $j=i$, and $F_i\mathbf{y}=\mathbf{0}$ otherwise. For every $\mathbf{x}\in\mathbb{R}^m$, let $\mathbf{x}=\sum_{j=1}^s\mathbf{x}_j$ where $\mathbf{x}_j\in\mathcal{E}_{\lambda_j}$. Therefore, we have
$F_i\mathbf{x}=F_i\mathbf{x}_i=\mathbf{x}_i\in \mathcal{E}_{\lambda_i}$. Moreover, $F_i^2\mathbf{x}=F_i\mathbf{x}_i=\mathbf{x}_i=F_i\mathbf{x}$, which yields $F_i^2=F_i$.
\end{proof}

From Lemma \ref{lem-4-26}, we deduce a characterization of graphs having  $s$  $\mathcal{H}$-eigenvalues.

\begin{prop}\label{cor-4-33}
For a graph $G$ of size $m$ with the Helmholtzian matrix $\mathcal{H}$,  the following statements are equivalent:
\begin{itemize}
    \item[(i)] $\mathcal{H}$ has $s$  $\mathcal{H}$-eigenvalues;
    \item[(ii)] The space $\{f(\mathcal{H})\mid f(x)\in \mathbb{R}[x]\}$ has dimension $s$;
    \item[(iii)] There exist real numbers $\lambda_1>\lambda_2>\cdots>\lambda_s$ such that $\mathcal{H}-\lambda_iI$ is singular, $f_i(\mathcal{H})^2=f_i(\mathcal{H})$ and $\mathcal{H}f_i(\mathcal{H})\mathbf{x}=f_i(\mathcal{H})\mathbf{x}$ for every $\mathbf{x}\in\mathbb{R}^m$ and $1\le i\le s$, where $f_i(x)=\frac{\prod_{j\ne i}(x-\lambda_j)}{\prod_{j\ne i}(\lambda_i-\lambda_j)}$.
\end{itemize}
\end{prop}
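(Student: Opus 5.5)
Because $\mathcal{H}$ is real symmetric, it is diagonalizable, and its minimal polynomial is $\mu(x)=\prod_{j=1}^{s}(x-\lambda_j)$, where $\lambda_1>\cdots>\lambda_s$ are its distinct eigenvalues. The plan is to prove (i)$\Leftrightarrow$(ii) through the minimal polynomial, and (i)$\Leftrightarrow$(iii) through Lemma~\ref{lem-4-26} together with the spectral decomposition. Statement (iii) should be read with $\lambda_i$ playing the role of the eigenvalue: the last condition is $\mathcal{H}f_i(\mathcal{H})\mathbf{x}=\lambda_i f_i(\mathcal{H})\mathbf{x}$. I would make this reading explicit in the proof.

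For (i)$\Leftrightarrow$(ii), the map $\mathbb{R}[x]\to\{f(\mathcal{H})\}$, $f\mapsto f(\mathcal{H})$, is surjective. Its kernel is the ideal generated by the minimal polynomial, so the image has dimension $\deg\mu$. Diagonalizability gives $\deg\mu$ equal to the number of distinct eigenvalues, since $\prod_j(\mathcal{H}-\lambda_jI)$ kills every eigenvector and no proper subproduct does. Hence the dimension is $s$ exactly when $\mathcal{H}$ has $s$ distinct eigenvalues. Alternatively, one can note that $f_1(\mathcal{H}),\dots,f_s(\mathcal{H})$, i.e.\ $\mathcal{P}_1,\dots,\mathcal{P}_s$, form a basis: they are nonzero with $\mathcal{P}_i\mathcal{P}_j=\delta_{ij}\mathcal{P}_i$, hence independent, and every $f(\mathcal{H})$ equals $\sum_i f(\lambda_i)\mathcal{P}_i$.

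For (i)$\Rightarrow$(iii), take the $\lambda_i$ to be the distinct eigenvalues, so each $\mathcal{H}-\lambda_iI$ is singular. Lemma~\ref{lem-4-26} says $f_i(\mathcal{H})=\mathcal{P}_i$ is the orthogonal projection onto $\mathcal{E}_{\lambda_i}$. Therefore $f_i(\mathcal{H})^2=f_i(\mathcal{H})$, and $f_i(\mathcal{H})\mathbf{x}\in\mathcal{E}_{\lambda_i}$, which gives $\mathcal{H}f_i(\mathcal{H})\mathbf{x}=\lambda_i f_i(\mathcal{H})\mathbf{x}$.

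For (iii)$\Rightarrow$(i), singularity of $\mathcal{H}-\lambda_iI$ makes each $\lambda_i$ an eigenvalue, so there are at least $s$ distinct eigenvalues. For the converse bound, note that the eigen-condition gives $(\mathcal{H}-\lambda_iI)f_i(\mathcal{H})=0$. Lagrange interpolation gives $\sum_i f_i(x)=1$, hence $\sum_i f_i(\mathcal{H})=I$. So every $\mathbf{x}=\sum_i f_i(\mathcal{H})\mathbf{x}$ is a sum of vectors in $\ker(\mathcal{H}-\lambda_iI)$. Any eigenvector for an eigenvalue $\mu\notin\{\lambda_i\}$ would therefore lie in a sum of eigenspaces for different eigenvalues. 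By orthogonality of eigenspaces of a symmetric matrix, it would then be zero. This shows the spectrum is exactly $\{\lambda_1,\dots,\lambda_s\}$.

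The main obstacle is not mathematical but interpretive: the literal condition $\mathcal{H}f_i(\mathcal{H})\mathbf{x}=f_i(\mathcal{H})\mathbf{x}$ must be read with the factor $\lambda_i$, since otherwise it forces every eigenvalue to equal $1$. The idempotence condition is then not even needed for the reverse direction, but it holds and is harmless.
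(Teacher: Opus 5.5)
Your proof is correct and follows the route the paper intends. The paper gives no separate argument and simply deduces the proposition from Lemma~\ref{lem-4-26}, i.e.\ $f_i(\mathcal{H})=\mathcal{P}_i$ together with diagonalizability of $\mathcal{H}$, which is exactly what you use for (i)$\Leftrightarrow$(ii) and (i)$\Leftrightarrow$(iii).

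Your reading of the last condition in (iii) as $\mathcal{H}f_i(\mathcal{H})\mathbf{x}=\lambda_i f_i(\mathcal{H})\mathbf{x}$ is the right correction of a typo. It matches the condition $F_i\mathbf{x}\in\mathcal{E}_{\lambda_i}$ in the proof of Lemma~\ref{lem-4-26}, whereas taken literally (i)$\Rightarrow$(iii) already fails for $K_n$, where $f_1\equiv 1$ and $\mathcal{H}(K_n)=nI$.
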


Recall that a connected graph $G$ has diameter at most $s-1$ if  $G$ has $s$ eigenvalues of the adjacency matrix. This fact is slightly different for the $\mathcal{H}$-eigenvalues.

\begin{thm}\label{lem-f-1}
Let $G$ be a connected graph with  $s$   $\mathcal{H}$-eigenvalues. Then the diameter of $G$ is at most $s$.
\end{thm}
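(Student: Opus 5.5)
We imitate the classical adjacency-matrix argument, but work in the line graph (edges as indices). By Lemma~\ref{lem-4-26}, with $\lambda_1>\cdots>\lambda_s$ the distinct $\mathcal{H}$-eigenvalues, the polynomial $p(x)=\prod_{i=1}^s(x-\lambda_i)$ annihilates $\mathcal{H}$. Since $p$ is monic of degree $s$, $\mathcal{H}^s$ is a linear combination of $I,\mathcal{H},\ldots,\mathcal{H}^{s-1}$. The plan is to show that if the diameter of $G$ is at least $s+1$, then some entry of $\mathcal{H}^s$ is nonzero while the same entry vanishes in every lower power, which is a contradiction.

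\emph{Step 1: sparsity of powers.} Define the distance between edges $e,e'$ as their distance in the line graph $L(G)$. By Theorem~\ref{H-def}, $h_{ee'}\neq 0$ only if $e=e'$ or $e\sim e'$. Hence $(\mathcal{H}^k)_{e,e'}=0$ whenever $d_{L(G)}(e,e')>k$, because every term in the expansion of the product is a walk of length $k$ in $L(G)$, possibly with loops.

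\emph{Step 2: choosing the edges.} Let $u,w$ be vertices with $d_G(u,w)=D\ge s+1$, and take a shortest path $u=v_0v_1\cdots v_D=w$. Set $e=v_0v_1$ and $e'=v_{D-1}v_D$. Then $d_{L(G)}(e,e')=D-1\ge s$. If $D-1>s$, shorten the path, choosing the edge $v_{s}v_{s+1}$ instead of $e'$, so that we may assume $d_{L(G)}(e,e')=s$ exactly. By Step 1, $(\mathcal{H}^k)_{e,e'}=0$ for all $k<s$. Thus it suffices to prove $(\mathcal{H}^s)_{e,e'}\ne0$.

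\emph{Step 3, the main obstacle: non-cancellation.} Unlike the adjacency case, the off-diagonal entries of $\mathcal{H}$ carry signs $\pm1$, and some adjacent edges give entry $0$ when they lie in a common triangle. So we must show that the walks of length $s$ from $e$ to $e'$ do not cancel. Any such walk must use $s$ genuine steps in $L(G)$, so it has no loops and is a shortest path in $L(G)$. Since the path $v_0\cdots v_{s+1}$ is shortest in $G$, every shortest $L(G)$-path from $e$ to $e'$ has the form $f_0=e,f_1,\ldots,f_s=e'$, where consecutive edges share a vertex and $f_j$ joins a vertex at distance $j$ from $v_0$ (or from $\{v_0,v_1\}$) to one at distance $j+1$.

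First, consecutive edges $f_{j},f_{j+1}$ cannot lie in a common triangle. If they did, the third edge of that triangle would allow a shortcut, contradicting the distance count, so each step contributes $\pm1$.

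To control signs, we use the fact that eigenvalues are orientation-independent (\cite[Theorem 2.1]{lu-shi-sta-ww}). Orient every edge from the endpoint closer to $v_0$ toward the endpoint farther away; this is well defined because the endpoints of each $f_j$ are at consecutive distances. Then consecutive edges $f_j,f_{j+1}$ on a shortest path satisfy $f_j\rightarrow f_{j+1}$, so each factor equals $-1$. Every surviving walk therefore contributes $(-1)^s$, and $(\mathcal{H}^s)_{e,e'}=(-1)^sN\ne0$, where $N\ge1$ is the number of shortest paths.

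The delicate point is verifying that consecutive edges share the ``middle'' vertex in head-to-tail fashion. A shortest path in $L(G)$ could instead pivot through a shared tail (a $\overset{\pm}{\sim}$ step), and such a path would have length $>s$ between $e$ and $e'$. I would check this using the layered distance structure, possibly after selecting $e,e'$ carefully, for instance by measuring distances from $v_0$ throughout. With non-cancellation established, the entry $(e,e')$ of the identity $p(\mathcal{H})=0$ reduces to $(\mathcal{H}^s)_{e,e'}=0$, which is a contradiction. Hence the diameter of $G$ is at most $s$.
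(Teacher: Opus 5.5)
Your proposal is correct and follows essentially the paper's proof. Both arguments layer the vertices by distance from $v_0$, orient edges from $V_i$ to $V_{i+1}$, kill the relevant entry of all lower powers via line-graph distance, and show that every surviving walk is a head-to-tail, triangle-free chain, so all contributions carry the same sign. Your choice of $e'=v_sv_{s+1}$ at line-graph distance exactly $s$, rather than the diametral pair with the power $d-1$, is only cosmetic.

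The ``delicate point'' you leave open already follows from your layered description. The shared vertices $w_1,\dots,w_s$ of consecutive $f_j$ form, together with $v_0$ and $v_{s+1}$, a walk of length $s+1$ between vertices at distance $s+1$, hence a geodesic. So $f_j\cap f_{j+1}$ is the vertex at distance $j+1$ from $v_0$, i.e.\ the head of $f_j$ and the tail of $f_{j+1}$.
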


\begin{proof}
Let $\mathcal{D}(G)$ be the diagonal matrix indexed by the edge set of $G$ with the $e$th diagonal entry $\Delta(e)+2$. Due to Theorem \ref{H-def}, we have $\mathcal{H}(G)= \mathcal{D}(G)-\mathcal{A}(G)$, where $\mathcal{D}(G)=\diag(\triangle(e)+2\mid e\in E(G))$ and $\mathcal{A}=(a_{ee'})$ is the square matrix indexed by the edge set of $G$ with
\begin{equation}\label{D+A}
a_{ee'}=\left\{\begin{array}{ll}1,&e\leftrightarrow e'\text{ and }e\not\vartriangle e'\\-1,&e'\overset{\pm}{\sim} e\text{ and }e\not\vartriangle e'\\0,& \text{otherwise}.\end{array}\right.
\end{equation}

Let $v_0$ and $v_d$ be a pair of vertices with $d(v_0,v_d)=d$, the diameter of $G$. Assume that $v_0v_1\ldots v_{d}$ is a path from $v_0$ to $v_d$, and let $e_i=v_{i-1}v_{i}$ for $1\le i\le d$. One may see $G$ as a rooted graph with root $v_0$, and denote  $V_i=\{v\in V(G)\mid d(v_0,v)=i\}$ for $0\le i\le d$. Since the $\mathcal{H}$-eigenvalues do not depend on the orientation of $G$, we may assume that $u\rightarrow v$ for any $uv\in E(G)$, with $u\in V_i$ and $v\in V_{i+1}$ for $0\le i\le d-1$.

We prove two claims.

{\it Claim 1}: $(\mathcal{H}^k(G))_{e_1e_{d}}=0$, for $k\le d-2$. In fact, if $k\le d-2$, for any edges $x_2,x_3,\ldots,x_{k}$, we have $a_{e_1x_2}a_{x_2x_3}\cdots a_{x_{k}e_{d}}=0$,
since otherwise $e_1x_2x_3\ldots x_{k}e_{d}$ forms a path from $v_0$ to $v_d$. This yields $d(v_0,v_d)\le k+1\le d-1$, which is a contradiction. Therefore,
$(A^k)_{e_1e_{d}}=\sum_{x_2,x_3,\ldots,x_{k}}a_{e_1x_2}a_{x_2x_3}\cdots a_{x_{k}e_{d}}=0$, which implies $(H^k)_{e_1e_{d}}=0$.

{\it Claim 2}: $(\mathcal{H}^{d-1}(G))_{e_1e_{d}}>0$. Actually, a direct computation shows that
\[({\mathcal{A}}(G)^{d-1})_{e_1e_{d}}=\sum_{x_2,\ldots,x_{d-1}}a_{e_1x_2}a_{x_2x_3}\cdots a_{x_{d-1}e_{d}}.\]
If  $a_{e_1x_2}a_{x_2x_3}\cdots a_{x_{d-1}e_{d}}\ne 0$, then $e_1x_2x_3\ldots
x_{d-1}e_{d}$ forms a path from $v_0$ to $v_d$. Assume that $x_i=u_{i-1}u_{i}$ for $2\le i\le
d-1$, where $u_1=v_1$ and $u_{d-1}=v_{d-1}$. From $d(v_0,v_{d})=d$  we have
$d(v_{0},u_i)=i$, and then $u_i\in V_i$ for $1\le i\le d-1$, which leads to
$a_{e_1x_2}a_{x_2x_3}\cdots a_{x_{d-1}e_{d}}=1$. Hence,
$({\mathcal{A}}^{d-1}(G))_{e_1e_{d}}\ge 0$. Moreover, since $a_{e_1e_2}a_{e_2e_3}\cdots
a_{e_{d-1}e_{d}}=1$, it holds $(A^{d-1})_{e_1e_{d}}>0$. Thereby, $(H^{d-1})_{e_1e_{d}}>0$.

We now  return to the main course. Suppose  that $d\ge s+1$. Since $\mathcal{H}(G)$ is symmetric, the degree of its minimal polynomial is $s$. From $s \le d-1$, we obtain
\[\mathcal{H}^{d-1}(G)=c_{m-1}\mathcal{H}^{m-1}(G)+c_{m-2}\mathcal{H}^{m-2}(G)+\cdots+
c_1\mathcal{H}(G)+c_0I,\]
where $c_0,c_1,\ldots,c_{m-1}\in \mathbb{R}$. However, Claims 1 and 2 give $(\mathcal{H}^{d-1}(G))_{e_1e_{d}}>0$; whereas  $(\mathcal{H}^k(G))_{e_1e_{d}}=0$ for $0\le k\le m-1\le d-2$, which is a contradiction and the proof is completed.
\end{proof}

Easily to verify that the $\mathcal{H}$-eigenvalues of the complete graph $K_n$ are $n$'s with multiplicity ${n \choose 2}$. Conversely, Theorem \ref{lem-f-1} indicates that the diameter of $G$ is at most 1 if $G$ has one $\mathcal{H}$-eigenvalue, and thus $G$ is complete. Hence, the following result follows.

\begin{prop}\label{cor-f-1}
A connected graph $G$ has one distinct $\mathcal{H}$-eigenvalue if and only if $G \cong K_n$ ($n \geq 2$).
\end{prop}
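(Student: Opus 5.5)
Both directions of the equivalence will be proved separately, using Theorem~\ref{H-def} for the forward computation and Theorem~\ref{lem-f-1} for the converse.

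\emph{Sufficiency.} Let $G\cong K_n$ with $n\ge 2$. Every edge $e=uv$ lies in exactly $n-2$ triangles, one for each further vertex, so $\triangle(e)+2=n$. Any two distinct edges $e\sim e'$ share a vertex, and their other endpoints are adjacent in $K_n$, so $e\vartriangle e'$. By Theorem~\ref{H-def} every off-diagonal entry of $\mathcal{H}(K_n)$ is therefore $0$. Hence $\mathcal{H}(K_n)=nI_{\binom n2}$, and its single $\mathcal{H}$-eigenvalue is $n$.

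\emph{Necessity.} Let $G$ be connected with exactly one $\mathcal{H}$-eigenvalue. By Theorem~\ref{lem-f-1} with $s=1$, the diameter of $G$ is at most $1$. We assume $G$ has at least one edge, and $G$ is connected, so any two vertices are adjacent. Thus $G\cong K_n$ with $n\ge 2$.

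Care is needed with the bound from Theorem~\ref{lem-f-1}, since it only gives diameter at most $s$, not $s-1$; with $s=1$ this gives diameter at most $1$, which is exactly what we need. If one prefers not to rely on that theorem, there is a direct argument. A symmetric matrix with a single eigenvalue $\lambda$ equals $\lambda I$, so $\mathcal{H}(G)$ is diagonal. Now suppose $G$ is not complete. Connectedness then gives a path $u-v-w$ with $u\nsim w$. The edges $e=uv$ and $e'=vw$ satisfy $e\sim e'$ and $e\not\vartriangle e'$, so $h_{ee'}=\pm1\ne0$ by Theorem~\ref{H-def}. This contradicts $\mathcal{H}(G)$ being diagonal. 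The direct argument is the cleanest route and makes the proof independent of the diameter bound.
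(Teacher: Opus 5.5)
Your proof is correct and follows the paper's route. Sufficiency is the direct check that $\mathcal{H}(K_n)=nI$: every edge lies in $n-2$ triangles and any two adjacent edges lie in a common triangle. Necessity applies Theorem~\ref{lem-f-1} with $s=1$ to get diameter at most $1$.

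Your direct alternative is also valid. A real symmetric matrix with one eigenvalue is $\lambda I$, whereas an induced path $u\sim v\sim w$ forces $h_{uv,vw}=\pm 1$. This is just the $d=2$ case of the argument behind Theorem~\ref{lem-f-1}, and it has the advantage of being self-contained.
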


Note that, in case of either the adjacency matrix or the  Laplacian matrix,  a graph has the previous spectral property if and only if  $G \cong K_1$, and $G$ has two  eigenvalue if and only if $G \cong K_n$ ($n \geq 2$).

Naturally, the next step is to characterize  graphs with two  $\mathcal{H}$-eigenvalues. We consider the $\mathcal{H}$-spectrum of the so-called {\it generalized windmill graph},  defined as $K_{n_0} \vee (K_{n_1}\cup K_{n_2}\cup\cdots\cup K_{n_k})$. The following lemma is needed.

\begin{lem}[{\cite{gant}}]\label{holed-matrix}
	Let $A$ and $D$ be invertible square matrices and
	$M=\begin{pmatrix}
		A & B \\
		C & D
	\end{pmatrix}
	$.
	Then $$\det(M)=\det(A)\det(D-CA^{-1}B)=\det(D)\det(A-BD^{-1}C).$$
\end{lem}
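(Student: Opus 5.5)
This is the standard Schur complement formula, so the plan is a block factorisation followed by taking determinants. Since $A$ is invertible, block Gaussian elimination gives
$$
\begin{pmatrix} A & B \\ C & D \end{pmatrix}
=\begin{pmatrix} I & O \\ CA^{-1} & I \end{pmatrix}
\begin{pmatrix} A & O \\ O & D-CA^{-1}B \end{pmatrix}
\begin{pmatrix} I & A^{-1}B \\ O & I \end{pmatrix}.
$$
The identity is verified by multiplying out the right-hand side. The product of the first two factors is $\begin{pmatrix} A & O \\ C & D-CA^{-1}B\end{pmatrix}$. Multiplying this by the third factor gives top row $(A,\ B)$ and bottom row $(C,\ CA^{-1}B + D - CA^{-1}B)=(C,\ D)$, which is $M$.

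Next we take determinants. The outer factors are block unitriangular, so each has determinant $1$; this follows, for example, by expanding along rows or columns of the identity blocks, or because they are triangular with unit diagonal. The middle factor is block diagonal, so its determinant is $\det(A)\det(D-CA^{-1}B)$. This is the standard fact that $\det\begin{pmatrix} X & O\\ O & Y\end{pmatrix}=\det X\det Y$, which follows from the Leibniz expansion because every permutation contributing a nonzero term must preserve the two index blocks. Multiplicativity of $\det$ then gives the first equality.

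The second equality is obtained symmetrically using the invertibility of $D$:
$$
M=\begin{pmatrix} I & BD^{-1} \\ O & I \end{pmatrix}
\begin{pmatrix} A-BD^{-1}C & O \\ O & D \end{pmatrix}
\begin{pmatrix} I & O \\ D^{-1}C & I \end{pmatrix},
$$
which is checked in the same way and yields $\det(M)=\det(D)\det(A-BD^{-1}C)$. Note that only the invertibility of $A$ is needed for the first equality and only that of $D$ for the second.

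There is no real obstacle here. The only points that need care are the block-multiplication check and the determinant of a block-triangular or block-diagonal matrix.
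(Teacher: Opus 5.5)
Your proof is correct: both block factorizations multiply out to $M$, the outer factors are unitriangular, and the middle factors are block diagonal, so multiplicativity of the determinant gives both formulas (and you rightly note that each equality needs only the invertibility of $A$, respectively $D$). The paper gives no proof of this lemma and simply cites Gantmacher, where these Schur determinant formulas are derived by essentially this same block-elimination argument.
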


\begin{lem}\label{sub-prop}
	If $n_1,n_2,\ldots,n_k$ are positive integers with $\sum_{i=1}^kn_i=n$ and
	$$
	{
		N=\begin{pmatrix}
			(n_1+1)I_{n_1} & J& \cdots & J \\
			J & (n_2+1)I_{n_2}& \cdots & J \\
			\vdots & \vdots & \ddots & \vdots \\
			J & J & \cdots & (n_s+1)I_{n_k}
	\end{pmatrix}},
	$$
	then
	\begin{equation}\label{spN}
		{\Sp}(N) =
		\left\{\!\!\begin{array}{ccccc}
			n+1  & n_1+1  & \cdots & n_k+1 & 1  \\
			1    & n_1-1 & \cdots & n_k-1 & k-1
		\end{array}
		\!\!\right\}.
	\end{equation}
\end{lem}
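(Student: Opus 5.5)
The plan is to write $N = D + J_n$, where $J_n$ is the $n\times n$ all-ones matrix and $D$ is block diagonal with blocks $(n_i+1)I_{n_i} - J_{n_i} = n_iI_{n_i} + (I_{n_i}-J_{n_i})$. Concretely, the $i$th diagonal block of $N$ is $(n_i+1)I_{n_i}$ and every off-diagonal block is $J$. So $N - J_n$ has diagonal blocks $(n_i+1)I_{n_i} - J_{n_i}$ and zero off-diagonal blocks. Then I will build an explicit eigenbasis in three families.

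First, take vectors supported on a single block $i$ and orthogonal to $\mathbf{1}_{n_i}$. Such vectors are annihilated by $J_{n_i}$ and by every off-diagonal $J$. Hence they are eigenvectors with eigenvalue $n_i+1$, and they span a space of dimension $n_i-1$ for each $i$.

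Second, take block-constant vectors $\mathbf{x}=(c_1\mathbf{1}_{n_1},\ldots,c_k\mathbf{1}_{n_k})^{\intercal}$. On this $k$-dimensional invariant subspace, $N$ acts by
\[
(N\mathbf{x})_{\text{block } i} = \Bigl((n_i+1)c_i + \sum_{j\ne i} n_j c_j\Bigr)\mathbf{1}_{n_i} = \Bigl(c_i + \sum_{j} n_j c_j\Bigr)\mathbf{1}_{n_i}.
\]
So the quotient matrix is $I_k + \mathbf{1}_k\mathbf{n}^{\intercal}$, where $\mathbf{n}=(n_1,\ldots,n_k)^{\intercal}$. This is the identity plus a rank-one matrix. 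Its eigenvalues are therefore $1$ with multiplicity $k-1$, coming from vectors $\mathbf{c}$ with $\sum n_jc_j=0$, and $1+\mathbf{n}^{\intercal}\mathbf{1}_k = n+1$, with eigenvector $\mathbf{c}=\mathbf{1}_k$, which gives the all-ones vector.

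Finally, I will count dimensions: $\sum_i(n_i-1) + (k-1) + 1 = n$. The constructed eigenvectors are linearly independent, because the first family is orthogonal to all block-constant vectors. So they exhaust the spectrum and give exactly \eqref{spN}.

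The only care needed is the bookkeeping when some values coincide, for example $n_i+1 = n_j+1$, or $n_i=1$, which gives an empty first family for that block. In \eqref{spN} these cases are handled by merging multiplicities, which the multiset statement allows. Alternatively, I could compute $\det(xI-N)$ via Lemma~\ref{holed-matrix}, but the eigenvector construction avoids any invertibility hypotheses. I expect no real obstacle beyond the rank-one quotient computation.
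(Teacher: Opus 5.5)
Your proof is correct, but it takes a different route from the paper's. Both arguments rest on the same rank-one splitting $N=D+J_n$; your $D$ is exactly $\lambda I-N_1$ in the paper's notation. The paper's blocks $N_{1i}$ should read $(\lambda-n_i-1)I_{n_i}+J$: the printed $-J$ is a sign slip, as the identity $N_{1i}\mathbf{1}=(\lambda-1)\mathbf{1}$ used right afterwards shows.

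The paper never constructs eigenvectors. It writes $\lambda I-N=N_1-\mathbf{1}_n\mathbf{1}_n^{\intercal}$ and borders this matrix by $\mathbf{1}_n$. It then applies Lemma~\ref{holed-matrix} in both directions, which amounts to the matrix determinant lemma, and reads off $\phi(N,\lambda)=(\lambda-1)^{k-1}(\lambda-1-n)\prod_i(\lambda-n_i-1)^{n_i-1}$. You instead diagonalize directly. The blockwise vectors orthogonal to $\mathbf{1}_{n_i}$ and the equitable-partition quotient $I_k+\mathbf{1}_k\mathbf{n}^{\intercal}$ give an explicit eigenbasis. Orthogonality together with the count $\sum_i(n_i-1)+k=n$ then shows the spectrum is exhausted.

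Your version gives the eigenspaces explicitly and, as you noted, needs no invertibility hypothesis. The paper's use of Lemma~\ref{holed-matrix} needs $N_1$ invertible, so it is justified only for $\lambda$ outside $\{1,n_1+1,\dots,n_k+1\}$ and is tacitly extended by the polynomial identity. The paper's version in turn produces the characteristic polynomial in closed form. Coincident values $n_i=n_j$ and the degenerate case $n_i=1$ are then absorbed automatically, with none of the multiplicity bookkeeping you flag at the end.
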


\begin{proof}
	The characteristic polynomial of $N$ is $\phi(N,\lambda) = \det(\lambda I-N) = \det(N_1-N_2)$, where
	\begin{equation*}\label{m-s2}
		N_1=\diag(N_{11},N_{12},\ldots,N_{1k})
		\;\; {\mbox{and}} \;\;
		N_2=J_{n\times n}=
		\mathbf{1}_n\mathbf{1}_n^{\intercal},
	\end{equation*}
	with $N_{1i}=(\lambda-n_i-1)I_{n_i}-J$ for $1\le i\le k$.
	Define
	$
	Z=
	\begin{pmatrix}
		N_1 & \mathbf{1}_{n} \\
		\mathbf{1}_n^{\intercal} & 1
	\end{pmatrix}
	$.
	Then, by Lemma \ref{holed-matrix}, we deduce that $$\det(Z) = \det(N_1)(1 - \mathbf{1}^{\intercal}N_1^{-1}\mathbf{1}) = \det(N_1-\mathbf{1}_n\mathbf{1}^{\intercal}_n) = \det(N_1-N_2) = \phi(N,\lambda).$$
	Since $\det(N_{1i})=(\lambda-1)(\lambda-n_i-1)^{(n_i-1)}$, we have $\det(N_1)=(\lambda-1)^k\prod\limits_{i=1}^k(\lambda-n_i-1)^{n_i-1}$. Note that $N_{1i}\mathbf{1}=(\lambda-1)\mathbf{1}$. It follows $N_{1i}^{-1}\mathbf{1}=\frac{1}{\lambda-1}\mathbf{1}$, and thus $\mathbf{1}^{\intercal}N_1^{-1}\mathbf{1}=\sum_{i=1}^k\mathbf{1}^{\intercal}N_{1i}^{-1}\mathbf{1}=\sum_{i=1}^k\frac{n_i}{\lambda-1}=\frac{n}{\lambda-1}$.
	Hence, $$\phi(N,\lambda)=\Big((\lambda-1)^k\prod\limits_{i=1}^k(\lambda-n_i-1)^{n_i-1}\Big)\Big(1-\frac{n}{\lambda-1}\Big)=(\lambda-1)^{k-1}(\lambda-1-n)\prod\limits_{i=1}^k(\lambda-n_i-1)^{n_i-1},$$ which leads to \eqref{spN}.
\end{proof}

We apply the previous result to compute the $\mathcal{H}$-spectrum of a generalized windmill.

\begin{prop}\label{H-gwg1}
	Let $G\cong K_{n_0} \vee(K_{n_1}\cup K_{n_2}\cup\cdots\cup K_{n_k})$ be the generalized windmill graph of order $n=\sum_{i=n_0}^{n_k}$. Then
	\begin{equation}\label{H-gwg}
		{\Sp}_{\mathcal{H}}(G)
		=\left\{\!\!\begin{array}{ccccc}
			n                 & n_0       & n_0 + n_1                   & \cdots & n_0+n_k \\
			\frac{n_0(n_0 +1)}{2}  & n_0(k-1)  &  \frac{(2n_0+n_1)(n_1-1)}{2} & \cdots & \frac{(2n_0+n_k)(n_k-1)}{2}
		\end{array}
		\!\!\right\}.
	\end{equation}
\end{prop}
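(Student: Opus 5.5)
The plan is to use the decomposition $\mathcal{H}(G)=\mathcal{B}\mathcal{B}^{\intercal}+\mathcal{C}^{\intercal}\mathcal{C}$ from \eqref{H=B+C} and Lemma~\ref{lem-1}. Because $\mathcal{C}\mathcal{B}=O$ (curl of a gradient vanishes), $\mathbb{R}^m$ splits orthogonally into $\operatorname{im}\mathcal{B}$ and the cycle space $Z=\ker\mathcal{B}^{\intercal}$, and both are $\mathcal{H}$-invariant.

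\emph{The gradient part.} On $\operatorname{im}\mathcal{B}$ the matrix $\mathcal{H}$ acts as $\mathcal{B}\mathcal{B}^{\intercal}$. Its nonzero spectrum equals that of $\mathcal{B}^{\intercal}\mathcal{B}=L(G)$. For the join $K_{n_0}\vee(K_{n_1}\cup\cdots\cup K_{n_k})$ the Laplacian spectrum follows from the standard join formula:
\begin{itemize}
\item $n$ with multiplicity $n_0$;
\item $n_0+n_i$ with multiplicity $n_i-1$, for each $i$;
\item $n_0$ with multiplicity $k-1$;
\item $0$ once.
\end{itemize}
This already accounts for the whole eigenvalue $n_0$ with multiplicity $n_0(k-1)$, and for part of each of $n$ and $n_0+n_i$.

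\emph{The cycle part.} Put $Q_i=K_{n_0}\vee K_{n_i}$, a complete graph on $n_0+n_i$ vertices. Every triangle of $G$ lies in some $Q_i$, and the triangles lying in several $Q_i$ are exactly those inside $K_{n_0}$. The key observation uses the fact that $\mathcal{H}(K_r)=rI$ (noted before Proposition~\ref{cor-f-1}). Hence $\mathcal{C}_i^{\intercal}\mathcal{C}_i=(n_0+n_i)I-\mathcal{B}_i\mathcal{B}_i^{\intercal}$ on the edges of $Q_i$, and similarly $\mathcal{C}_0^{\intercal}\mathcal{C}_0=n_0I-\mathcal{B}_0\mathcal{B}_0^{\intercal}$ on $K_{n_0}$. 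Since triangles inside $K_{n_0}$ are counted $k$ times in the sum over $i$, we get
\[
\mathcal{C}^{\intercal}\mathcal{C}=\sum_{i=1}^k \mathcal{C}_i^{\intercal}\mathcal{C}_i-(k-1)\,\mathcal{C}_0^{\intercal}\mathcal{C}_0
\]
(with zero padding). Applying this to a cycle $x$ supported on $K_{n_0}$ gives $\mathcal{H}x=\big(\sum_i(n_0+n_i)-(k-1)n_0\big)x=nx$. This yields eigenvalue $n$ on $Z(K_{n_0})$, of dimension $\binom{n_0-1}{2}$. For the eigenvalue $n_0+n_i$ I would take cycles of $Q_i$ that have zero coordinates on the edges of $K_{n_0}$, or more precisely the orthogonal complement of $Z(K_{n_0})$ inside $Z(Q_i)$, suitably corrected. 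On such $x$, every term $\mathcal{C}_j^{\intercal}\mathcal{C}_j x$ with $j\ne i$, and the $\mathcal{C}_0$ term, should vanish or cancel, leaving $(n_0+n_i)x$.

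\emph{Multiplicities and the main obstacle.} The remaining steps are as follows.
\begin{itemize}
\item Check that $Z$ is spanned by the cycle spaces $Z(Q_i)$. The clique complex is a cone over $K_{n_0}$ when $n_0\ge1$, so it is contractible and there are no harmonic edge-functions, i.e.\ the eigenvalue $0$ does not occur.
\item Count dimensions. $Z(K_{n_0})$ together with the complements of $Z(K_{n_0})$ in each $Z(Q_i)$ has dimension $\binom{n_0-1}{2}+\sum_i\big[\binom{n_0+n_i-1}{2}-\binom{n_0-1}{2}\big]$.
\item Add the gradient contributions. One then recovers the multiplicities $\binom{n_0-1}{2}+n_0=\frac{n_0(n_0+1)}{2}$ for $n$ (this is the contribution $\binom{n_0}{2}$ of $\mathcal{C}^{\intercal}\mathcal{C}$, i.e.\ $\dim Z(K_{n_0})+n_0-1$, which I would match carefully) and $\frac{(2n_0+n_i)(n_i-1)}{2}$ for $n_0+n_i$.
\item Cross-check that the multiplicities in \eqref{H-gwg} sum to $m=\binom{n_0}{2}+\sum_i\big(\binom{n_i}{2}+n_0n_i\big)$.
\end{itemize}

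The main obstacle is the cycle part for $n_0+n_i$. A cycle of $Q_i$ generally uses edges of $K_{n_0}$, and then the terms $\mathcal{C}_j^{\intercal}\mathcal{C}_j x$ for $j\ne i$ do not vanish. I would first choose the invariant complement explicitly: for instance, triangle-boundary vectors $\partial(a,b,c)$ in $Q_i$ with at least one vertex in $K_{n_i}$, projected orthogonally to $Z(K_{n_0})$. For this step, Lemma~\ref{sub-prop} (or a direct block computation in the spirit of its proof with Lemma~\ref{holed-matrix}) should diagonalize the coupling between $K_{n_0}$ and $K_{n_i}$. As a fallback, write $\mathcal{H}$ in the edge blocks $E(K_{n_0})$, $E(K_{n_0},K_{n_i})$, $E(K_{n_i})$ and compute the characteristic polynomial via Schur complements.
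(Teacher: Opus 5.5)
\textbf{There is a genuine gap: the spectrum you aim to prove on the cycle space $Z$ is wrong whenever $n_0\ge 2$ and $k\ge 2$, so the argument cannot reach \eqref{H-gwg}.} Several parts are correct: the splitting $\mathbb{R}^m=\operatorname{im}\mathcal{B}\oplus\ker\mathcal{B}^{\intercal}$, the Laplacian list for the gradient part, the identity $\mathcal{C}^{\intercal}\mathcal{C}=\sum_i\mathcal{C}_i^{\intercal}\mathcal{C}_i-(k-1)\mathcal{C}_0^{\intercal}\mathcal{C}_0$, and the eigenvalue $n$ on $Z(K_{n_0})$.

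First, the gradient part contains $n_0$ only $k-1$ times, not $n_0(k-1)$ times. The other $(n_0-1)(k-1)$ copies lie in $Z$, on cycles that belong to no $Z(Q_i)$. To see them, orient every edge between $K_{n_0}$ and the outer cliques towards $K_{n_0}$. Choose reals $a_1,\dots,a_k$ with $\sum_j n_ja_j=0$ and two apex vertices $v\neq v'$. Put $a_j$ on each edge from $K_{n_j}$ to $v$, and $-a_j$ on each edge from $K_{n_j}$ to $v'$. This vector lies in $\ker\mathcal{B}^{\intercal}$ and satisfies $\mathcal{H}x=n_0x$.

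Second, $Z$ meets the $n$-eigenspace in dimension $\binom{n_0}{2}$, not $\binom{n_0-1}{2}$. Your equality $\binom{n_0-1}{2}+n_0=\frac{n_0(n_0+1)}{2}$ is false for $n_0\ge 2$. Correspondingly, $n_0+n_i$ occurs in $Z$ only $\binom{n_0+n_i-1}{2}-\binom{n_0-1}{2}-(n_0-1)$ times. That is $n_0-1$ fewer than the dimension you assign, so no ``correction'' of the complement of $Z(K_{n_0})$ in $Z(Q_i)$ can be an $(n_0+n_i)$-eigenspace of that size.

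The diamond $K_2\vee 2K_1$ already shows the failure. There $Z$ is spanned by the boundaries of the two triangles, which share the edge of $K_{n_0}$. In that basis $\mathcal{H}$ acts on $Z$ as $\begin{pmatrix}3&\pm1\\ \pm1&3\end{pmatrix}$, with eigenvalues $4,2$ rather than $3,3$. Your scheme is correct only when $n_0=1$ (the $Q_i$ then share no edges, so their cycle spaces decouple) or $k=1$.

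\textbf{The missing idea is to group the edges by apex vertex rather than by outer clique.} Partition $E(G)$ into three kinds of classes:
\begin{itemize}
\item the edges inside the outer cliques;
\item the edges of $K_{n_0}$;
\item for each $v\in V(K_{n_0})$, the star of edges joining $v$ to $K_{n_1}\cup\cdots\cup K_{n_k}$.
\end{itemize}
Any two adjacent edges from different classes, or both from one of the first two classes, lie in a common triangle. Hence by Theorem~\ref{H-def}, $\mathcal{H}(G)$ is block diagonal, with blocks:
\begin{itemize}
\item $(n_0+n_i)I$ on $E(K_{n_i})$;
\item $nI$ on $E(K_{n_0})$;
\item $N+(n_0-1)I$ on each of the $n_0$ stars, with $N$ as in Lemma~\ref{sub-prop}.
\end{itemize}
By that lemma, each star block has spectrum $n$ once, $n_0+n_i$ with multiplicity $n_i-1$, and $n_0$ with multiplicity $k-1$. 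Adding up the blocks gives \eqref{H-gwg} at once; this is the paper's proof.

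Your Schur-complement fallback could also get there. However, with blocks $E(K_{n_0},K_{n_i})$, different $i$ stay coupled through edges sharing an apex vertex. The apex-vertex grouping removes all coupling and leaves only blocks whose spectra are already known.
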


\begin{proof}
	For $G$, we assign an orientation in such a way  that $u\rightarrow v$ holds for every $u \in \bigcup_{i=1}^kV(K_{n_i})$ and $v \in V(K_{n_0})$. Denote  $E_1=\bigcup_{i=1}^k E(K_{n_i})$, $E_3=E(K_{n_0})$ and $E_{2i}=\{\{u,v_i\}\mid v_i \in V(K_{n_0}), u \in \bigcup_{j=1}^kV(K_{n_j})\}$, for $1\le i\le n_0$. Then	
		\begin{equation*}
			\mathcal{H}(G) =
			\begin{pmatrix}
				H_{1} &  &  &    &  \\
				& H_{21} &  &    &  \\
				&  & \ddots &    &  \\
				&  &  & H_{2n_0}   &  \\
				&  &  &    & H_{3}
			\end{pmatrix},
		\end{equation*}
	where, for $1\le i\le n_0$, $H_3=nI_{n_0\choose 2}$,
	{
		$$H_{1}=\begin{pmatrix}
			(n_1+n_0)I_{n_1\choose 2} &  & \\
			& \ddots &  \\
			&  & (n_k+n_0)I_{n_k\choose 2}
		\end{pmatrix}
		$$
		and
		$$
		H_{2i}=
		\begin{pmatrix}
			(n_1+n_0)I_{n_1} & J & \cdots & J \\
			J & (n_2+n_0)I_{n_2} & \cdots & J \\
			\vdots & \vdots & \ddots & \vdots \\
			J & J & \cdots & (n_k+n_0)I_{n_k}
		\end{pmatrix}.
		$$}
By employing Lemma~\ref{sub-prop}, we arrive at \eqref{H-gwg}.
\end{proof}

Especially, the graph $K_t\vee(sK_1)$ is called a \textit{complete split graph}, the  $\mathcal{H}$-spectrum of which is obtained on the basis of Proposition~\ref{H-gwg1}.

\begin{cor}\label{CSG}
For $s \geq 2$,
$$
{\Sp}_{\mathcal{H}}(K_t\vee(sK_1)) = \left\{\!\!\begin{array}{cc}
                                 s+t & t  \\
                       t+1\choose 2  &  (s-1)t
                                \end{array}
                         \!\!\right\}.
$$
\end{cor}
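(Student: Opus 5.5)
The plan is to obtain the statement directly as the special case of Proposition~\ref{H-gwg1} in which $n_0=t$, $k=s$ and $n_1=n_2=\cdots=n_s=1$. With these values the generalized windmill $K_{n_0}\vee(K_{n_1}\cup\cdots\cup K_{n_k})$ becomes $K_t\vee(sK_1)$, and its order is $n=t+s$. I will assume $t\ge 1$ throughout: if $t=0$ the graph has no edges, while the right-hand side of the statement still has $0$ at multiplicity $s-1$, so the formula is only meant for $t\ge1$.

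Next I substitute these values into \eqref{H-gwg} column by column.
\begin{itemize}
\item The eigenvalue $n=s+t$ appears with multiplicity $\frac{n_0(n_0+1)}{2}=\binom{t+1}{2}$.
\item The eigenvalue $n_0=t$ appears with multiplicity $n_0(k-1)=t(s-1)$.
\item Each remaining column has eigenvalue $n_0+n_i=t+1$ with multiplicity $\frac{(2n_0+n_i)(n_i-1)}{2}=\frac{(2t+1)\cdot 0}{2}=0$. So these columns drop out of the spectrum.
\end{itemize}
Structurally this is expected: the blocks $H_1$ come from the edges inside the $K_{n_i}$, and here every $K_{n_i}=K_1$ has no edges, so $H_1$ is empty.

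As a consistency check, I will confirm that the surviving multiplicities add up to the number of edges $\binom{t}{2}+ts$:
\[
\binom{t+1}{2}+t(s-1)=\frac{t^2+t}{2}+ts-t=\frac{t^2-t}{2}+ts=\binom{t}{2}+ts .
\]

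Finally I need the two listed eigenvalues to be genuinely distinct and both present. They are distinct because $s+t\neq t$. The hypothesis $s\ge2$ makes the multiplicity $(s-1)t$ positive. When $s=1$ the graph is $K_{t+1}$, which has only one $\mathcal{H}$-eigenvalue by Proposition~\ref{cor-f-1}; this explains why $s\ge2$ is required.

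No step here is a real obstacle, since the statement is a direct specialization. The only point needing care is that Lemma~\ref{sub-prop} and the block decomposition in Proposition~\ref{H-gwg1} still make sense when every $n_i=1$: the blocks $(n_i+n_0)I_{n_i}$ become $1\times1$, and the multiplicities $n_i-1$ in \eqref{spN} vanish. The substitution is therefore legitimate.
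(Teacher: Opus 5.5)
Your proposal is correct and follows the paper's route exactly: the corollary is read off from Proposition~\ref{H-gwg1} with $n_0=t$, $k=s$ and $n_1=\cdots=n_s=1$, and the columns with eigenvalue $n_0+n_i=t+1$ receive multiplicity $0$. One small slip is in your aside: at $t=0$ the right-hand side does not contain $0$ with multiplicity $s-1$, because both multiplicities $\binom{t+1}{2}$ and $(s-1)t$ vanish, so the formula there correctly gives the empty spectrum of the edgeless graph $sK_1$.
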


Actually, the complete split graphs belong to the class of the  {\it cographs}, i.e., $P_4$-free graphs. Cographs have appeared during  1960s due to their wide applications in areas like parallel computing \cite{nak-cograph} or even biology \cite{gag-cograph}. So far, various characterizations of cographs have been presented \cite{bra-cogtraph}, and one of them states that a connected cograph it is the join of two cographs. We will use this in the forthcoming  proofs.

\begin{thm}
A connected graph $G$ has  two  $\mathcal{H}$-eigenvalues if and only if $G\cong K_t\vee(sK_1)$, with $s \geq 2$.
\end{thm}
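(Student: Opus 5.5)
The sufficiency direction is immediate from Corollary~\ref{CSG}: for $s\ge2$ the graph $K_t\vee(sK_1)$ has exactly the two $\mathcal{H}$-eigenvalues $s+t$ and $t$. (Here $t\ge1$ is needed for connectivity, and $t\ne s+t$ because $s\ge 2$.) The real work is necessity. I plan to show, in order, that $G$ is a cograph, that it is a join of a clique with something, and finally that the remaining part is edgeless.

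\emph{Step 1: $G$ is $P_4$-free and has diameter $2$.} By Theorem~\ref{lem-f-1} the diameter is at most $2$. By Proposition~\ref{cor-f-1} it is not $1$, since $G$ has two eigenvalues rather than one.

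To rule out an induced $P_4$ I will use interlacing. The paper recalls from \cite{li-lu-wang} that eigenvalues interlace under edge addition, and a principal submatrix of $\mathcal{H}(G)$ gives Cauchy interlacing directly. With only two eigenvalues $\mu>\nu$, every principal submatrix has all its eigenvalues in $[\nu,\mu]$. Moreover, any principal submatrix of order $r$ has eigenvalue $\mu$ with multiplicity at least $r-(\text{mult of }\nu)$, and symmetrically for $\nu$.

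The key observation is that $\mathcal{H}^2=(\mu+\nu)\mathcal{H}-\mu\nu I$. Comparing off-diagonal entries gives local combinatorial identities. For two adjacent edges $e\sim e'$ that do not lie in a common triangle, so that $h_{ee'}=\pm1$,
\[
(\mathcal{H}^2)_{ee'}=h_{ee'}(h_{ee}+h_{e'e'})+\sum_{x\ne e,e'}h_{ex}h_{xe'}=(\mu+\nu)h_{ee'}.
\]
For two disjoint edges, $(\mathcal{H}^2)_{ee'}=0$. Similarly, the diagonal entries give
\[
(\triangle(e)+2)^2+\#\{x: h_{ex}\ne0\}=(\mu+\nu)(\triangle(e)+2)-\mu\nu .
\]

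Now suppose $abcd$ is an induced $P_4$, and take $e=ab$, $e'=cd$. These edges are disjoint, so $(\mathcal{H}^2)_{ee'}=\sum_x h_{ex}h_{xe'}$. The only edge $x$ meeting both $e$ and $e'$ among the path's edges is $bc$; further candidates are $x=ad$ (a non-edge here), $ac$, or $bd$, and the latter two are non-edges. So the sum is a single nonzero term $h_{ab,bc}h_{bc,cd}=\pm1$, which is a contradiction. Hence $G$ is $P_4$-free. Since $G$ is a connected cograph, $G=G_1\vee G_2$.

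\emph{Step 2: determining the join.} I next apply the same $\mathcal{H}^2$ identity to other configurations.

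For two disjoint edges $e=uv$ and $e'=wz$, the sum $\sum_x h_{ex}h_{xe'}$ runs over the connecting edges $uw,uz,vw,vz$ that exist. Each such $x$ contributes $\pm1$, or $0$ if $x$ is in a triangle with $e$ or with $e'$; this happens precisely when the corresponding vertex of the other edge is adjacent to both ends. Requiring these sums to vanish for all disjoint pairs, together with the first identity for adjacent non-triangular pairs, should force a rigid structure: the vertices that are universal form a clique $K_t$, and the non-universal vertices must be pairwise nonadjacent.

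The argument I have in mind goes as follows. Let $U$ be the set of universal vertices, so $G=K_t\vee H$ where $H$ has no universal vertex. Suppose $H$ has an edge $uv$. Then $H$, being a cograph with no universal vertex, is disconnected or has co-connected structure. Picking a vertex $w$ of $H$ non-adjacent to $u$ and comparing the diagonal identity at edges $uv$ and at an edge $wy$ with $y\in U$ should produce two incompatible values of $\mu+\nu$, $\mu\nu$. Alternatively, I would use the eigenvalue count directly. If $H$ contains an edge, choose an induced subgraph $K_t\vee(K_2\cup K_1)$ or $K_t\vee \overline{P_3}$-type configuration. Proposition~\ref{H-gwg1} shows the generalized windmill $K_{n_0}\vee(K_{n_1}\cup\cdots)$ with some $n_i\ge2$ already has three distinct eigenvalues $n$, $n_0$, $n_0+n_i$. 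I would then lift this to $G$ via the identity $\mathcal{H}^2=(\mu+\nu)\mathcal{H}-\mu\nu I$ evaluated on the relevant edges.

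Once $H$ is edgeless, $G=K_t\vee sK_1$, and $s\ge2$ follows because $s\le1$ gives $K_n$.

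\emph{Main obstacle.} The hardest part is Step~2: showing that the non-universal part is edgeless purely from the local $\mathcal{H}^2$ identities. This requires careful bookkeeping of signs under a convenient orientation (for instance all edges directed toward $U$, as in Proposition~\ref{H-gwg1}) and of which products vanish because of shared triangles. I expect to finish by comparing the diagonal identity for an edge inside $U$, where $\triangle(e)=n-2$, with that for an edge from $U$ to $H$, and with that for an edge inside $H$. These give three equations in the two unknowns $\mu+\nu$ and $\mu\nu$, and I expect them to be inconsistent whenever $H$ has an edge.
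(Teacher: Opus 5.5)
Your sufficiency argument and Step~1 are correct. The direct $P_4$ computation (the only connecting edge of $ab$ and $cd$ is $bc$, so $(\mathcal{H}^2)_{ab,cd}=\pm1\neq 0$) is cleaner than the paper's detour through $C_4$ and $C_5$.

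The genuine gap is Step~2. It is only sketched, and the route you propose to finish with cannot work. First, a connected cograph need not have a universal vertex: $C_4=2K_1\vee 2K_1$ has $t=0$. Second, the diagonal identities $d_e^2+N_e=(\mu+\nu)d_e-\mu\nu$ (with $d_e=\triangle(e)+2$ and $N_e$ the number of nonzero off-diagonal entries in row $e$) cannot detect induced $C_4$'s:
\begin{itemize}
\item in $C_4$ every edge has $d_e=N_e=2$, so there is a single equation;
\item in the wheel $K_1\vee C_4$ we have $t=1$, no edge inside $U$, and $H=C_4$ has edges. The spokes give $17=4(\mu+\nu)-\mu\nu$ and the rim edges give $11=3(\mu+\nu)-\mu\nu$, which are consistent with $\mu+\nu=6$, $\mu\nu=7$.
\end{itemize}
So your expectation that the diagonal equations become inconsistent whenever $H$ has an edge is false.

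What is missing is actually applying your disjoint-edge identity to an induced $C_4$. For opposite edges $e,e'$ one has $h_{ee'}=0$ but $(\mathcal{H}^2)_{ee'}=\pm2$, because the two connecting edges contribute with the same sign (both $+1$ under the cyclic orientation). This is the paper's Claim~1, and it is what yields $t\ge 1$.

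After that, $G=K_t\vee H$ is $\{P_4,C_4\}$-free with $t\ge1$ and $H$ has no universal vertex, and you still must show $H$ is edgeless. The paper does this with a second off-diagonal computation, on an induced paw $K_1\vee(K_2\cup K_1)$ with pendant vertex $v_1$ and degree-$3$ vertex $v_2$. Take $e=v_2v_3$, $e'=v_2v_4$; then $h_{ee'}=0$. Every nonzero term of $(\mathcal{H}^2)_{ee'}$ comes from an edge $v_2u$ with $u$ adjacent to neither $v_3$ nor $v_4$. All such terms equal $-1$ when $v_2$ is the tail of $e$ and the head of $e'$, and $u=v_1$ supplies at least one. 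Paw- and $C_4$-freeness then force one side of the join to be complete and the other edgeless.

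Your diagonal idea can be rescued at this stage, but only with a targeted pair of edges. Take $v$ a universal vertex of a nontrivial component of $H$, $w$ a neighbour of $v$ (so $N_H[w]\subseteq N_H[v]$), and $u\in U$. Then $uw$ and $vw$ have the same diagonal entry $t+d_H(w)+1$, but $N_{uw}=|V(H)|-1-d_H(w)$ while $N_{vw}=d_H(v)-d_H(w)$. These differ because $v$ is not universal in $H$.

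Finally, your alternative of lifting the three windmill eigenvalues is not valid. For an induced subgraph $G'$, the principal submatrix of $\mathcal{H}(G)$ on $E(G')$ is $\mathcal{H}(G')$ plus a nonnegative diagonal coming from extra triangles, not $\mathcal{H}(G')$ itself. In any case, interlacing allows a principal submatrix of a matrix with two eigenvalues to have many distinct eigenvalues.
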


\begin{proof}
The sufficiency follows from Corollary \ref{CSG}. We next show the necessity. Since $G$ has two $\mathcal{H}$-eigenvalues, by Proposition \ref{cor-4-33}(ii), there exist $a,b\in\mathbb{R}$ such that
\begin{equation}\label{h2hI}
{\mathcal{H}}^2(G)=a\mathcal{H}(G)+bI.
\end{equation}

{\it Claim 1}: $G$ is $C_4$-free. Suppose for a contradiction that $G$ contains an induced cycle $C_4=v_1v_2v_3v_4v_1$. Without loss of generality, assume that $v_1\rightarrow v_2\rightarrow v_3\rightarrow v_4\rightarrow v_1$. Set $e=v_1v_2$ and $e'=v_3v_4$. A direct  calculation shows that $(\mathcal{H}^2(G))_{ee'}=2$. However, $\mathcal{H}(G)_{ee'}=0$, which leads to the impossible scenario $2=0$.

With a similar method,  $G$ is also $C_5$-free, and then $G$ is  $P_4$-free, i.e., $G$ is a cograph.

{\it Claim 2}: $G$ is $K_1\vee (K_2\cup K_1)$-free. Again, suppose  that $\{v_1,v_2,v_3,v_4\}$ induces  $K_1\vee(K_2\cup K_1)$ with $v_1$ being the pendant vertex and $v_2$ being the vertex of degree $3$ in this subgraph, respectively. Assume that $v_1\rightarrow v_2\rightarrow v_3\rightarrow v_4\rightarrow v_2$, and set $e=v_2v_3$ and $e'=v_2v_4$. Clearly, $\mathcal{H}(G)_{ee'}=0$. On the other hand, if $\mathcal{H}(G)_{ex}\mathcal{H}(G)_{xe'}\ne 0$, then $x=v_2u$ for some $u\not\in \{v_3,v_4\}$. If $u\rightarrow v_2$, then $\mathcal{H}(G)_{ex}\mathcal{H}(G)_{xe'}=(-1)\cdot 1=-1$. If $v_2\rightarrow u$, then $\mathcal{H}(G)_{ex}\mathcal{H}(G)_{xe'}=1\cdot(-1)=-1$. Therefore, we have
$(\mathcal{H}^2(G))_{ee'}=\sum_{x}\mathcal{H}(G)_{ex}\mathcal{H}(G)_{xe'}<0$, which contradicts  \eqref{h2hI}.

By the previous claims, $G$ is $\{P_4,C_4,K_1\vee (K_2\cup K_1)\}$-free. Since $G$ is $P_4$-free, it is a join $G\cong G_1 \vee G_2$. Without loss of generality, we may assume that no vertex in $V(G_1)$ is adjacent all other vertices in $G_1$, as for otherwise this vertex may be moved to $G_2$. By Proposition \ref{cor-f-1}, $G$ is not a  complete graph. Note that $G$ contains  $2K_1$ as an induced subgraph, and without loss of generality we may suppose that $G_1$ contains  $2K_1$. Since $G$ is $C_4$-free, $G_2$ is $2K_1$-free, and hence $G_2$ is a complete graph.

We next show that $G_1$ is edgeless. Assume to the contrary that $G_1$ has an edge. Since $G_1$ contains $2K_1$, it contains either  $K_2\cup K_1$ or $P_3$ as an induced subgraph. In the former case, $G$ contains  $K_1\vee(K_2\cup K_1)$, which is impossible. If the latter happens, we set $P_3=uvw$ and claim that  any other vertex in $G_1$ is adjacent to $v$. Otherwise, there exists $v'\in V(G')$ with $v'\not\sim v$. If $v'\not\sim u$ or $v'\not\sim w$, then $G_1$ contains  $K_2\cup K_1$, and thus $G$ contains  $K_1\vee (K_2\cup K_1)$, which is impossible. If $v'\sim u$ and $ v' \sim w$, then $\{u,v,w,v'\}$ forms  $C_4$, which is the final contradiction.
\end{proof}

We conclude this subsection by the next natural problem.

\begin{problem}
Characterize  graphs with three  $\mathcal{H}$-eigenvalues.
\end{problem}

\subsection{The Least $\mathcal{H}$-eigenvalue}\label{H46}

For a non-zero vector $\mathbf{x}\in \mathbb{R}^m$,  $\frac{\mathbf{x}^{\intercal}\mathcal{H}(G)\mathbf{x}}{\mathbf{x}^{\intercal}\mathbf{x}}$ is called the {\it Rayleigh quotient} for $\mathcal{H}(G)$ and $\mathbf{x}$. It is well known that
\[\lambda_1=\max\frac{\mathbf{x}^{\intercal}\mathcal{H}(G)\mathbf{x}}{\mathbf{x}^{\intercal}\mathbf{x}}\,\text{ and }\,\lambda_m=\min\frac{\mathbf{x}^{\intercal}\mathcal{H}(G)\mathbf{x}}{\mathbf{x}^{\intercal}\mathbf{x}}.\]

For a vertex $v$ and an edge $e$ with $v\in \mathcal{V}(e)$, let ${\sgn}(v,e)=-1$ if $v=e^-$, and ${\sgn}(v,e)=1$
otherwise. For an edge $e$ and a triangle $\vartriangle$ with $e\in\vartriangle$, let ${\sgn}(e,\vartriangle)=1$ if  orientations of $e$ and $\vartriangle$ agree, and
${\sgn}(v,e)=-1$ otherwise.
Since $(\mathcal{C}\mathbf{x})_{\vartriangle}=\sum_{e\in\vartriangle}{\sgn}(e,\vartriangle)x_e$, we
have
\begin{equation}\label{eq-1}
\frac{1}{|T|}\big(\sum_{\vartriangle}\sum_{e\in\vartriangle}{\sgn}(e,\vartriangle)x_e\big)^2\le
\mathbf{x}^{\intercal}\mathcal{C}^{\intercal}\mathcal{C}\mathbf{x}=\sum_{\vartriangle}\big(\sum_{e\in\vartriangle}{\sgn}(e,\vartriangle)x_e\big)^2\le\sum_{\vartriangle}3\sum_{e\in\vartriangle}x_e^2=\sum_{e\in
E(G)}3\Delta(e)x_e^2.
\end{equation}
Similarly, from $(\mathcal{B}^{\intercal}\mathbf{x})_v=\sum_{e\sim v}{\sgn}(v,e)\mathbf{x}_e$, we deduce

\begin{align}\label{eq-2}
\frac{1}{|V|}\big(\sum_{v}\sum_{e\sim v}{\sgn}(v,e)x_e\big)^2 \nonumber
&\le \mathbf{x}^{\intercal}\mathcal{B}\mathcal{B}^{\intercal}\mathbf{x}=\sum_{v}\big(\sum_{e\sim v}{\sgn}(v,e)x_e\big)^2\\
&\le\sum_vd(v)\sum_{e\sim v}x_e^2=\sum_{e\in E(G)}\big(d(e^+)+d(e^-)\big)x_e^2.
\end{align}
On the other hand, we have
\begin{equation}\label{eq-3}
\begin{array}{lll}
\mathbf{x}^{\intercal}\mathcal{H}(G)\mathbf{x}&=&\sum_{e}(\Delta(e)+2)x_e^2-2\sum_{e\leftrightarrow
e',e\not\vartriangle e'}x_ex_e'+2\sum_{e\overset{\pm}{\sim} e',e\not\vartriangle
e'}x_ex_e'\\[2mm]
&=&\sum_{e}\big(3\Delta(e)+4-d(e^+)-d(e^-)\big)x_e^2+\sum_{e\leftrightarrow e',e\not\vartriangle
e'}(x_e-x_e')^2\\
&\phantom{+}&+\sum_{e\overset{\pm}{\sim} e',e\not\vartriangle e'}(x_e+x_e')^2.
\end{array}
\end{equation}

From our experience, the inequalities \eqref{eq-1} and \eqref{eq-2} as well as the equality
\eqref{eq-3} have a high potential  in determining  upper and lower bounds for
$\lambda_1$ and the related topics \cite{3w-2b}. Here we provide two upper bounds for the least $\mathcal{H}$-eigenvalue $\lambda_{m}$.

\begin{thm}
Let $G$ be a connected graph of order $n$ whose least $\mathcal{H}$-eigenvalue is $\lambda_m$. Then
\begin{itemize}
\item[{(i)}]
$\lambda_{m} \le \min\{\Delta(e)+2 \mid e \in E(G)\}$ with equality if and only if $G=K_n$;\\[-5mm]
\item[{(ii)}]
If $G \ncong K_n$, then
$\lambda_m\le\min\big\{\frac{\Delta(e_i)+\Delta(e_j)}{2}+1\big\}$ where $e_i$ and $e_j$ form an induced $P_3$.
\end{itemize}
\end{thm}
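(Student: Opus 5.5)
The plan is to use the Rayleigh quotient characterization $\lambda_m=\min_{\mathbf{x}\ne 0}\mathbf{x}^{\intercal}\mathcal{H}(G)\mathbf{x}/\mathbf{x}^{\intercal}\mathbf{x}$ with very sparse test vectors.

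For (i), I will take $\mathbf{x}=\mathbf{e}_e$, the unit vector supported on a single edge $e$. Then $\mathbf{x}^{\intercal}\mathcal{H}\mathbf{x}=h_{ee}=\triangle(e)+2$ by Theorem \ref{H-def}. Minimizing over $e$ gives the bound.

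For the equality case, the converse direction is easy. If $G=K_n$, every edge lies in $n-2$ triangles, so the bound is $n$, and $\Sp_{\mathcal{H}}(K_n)$ is $n$ with full multiplicity. For the forward direction, equality $\lambda_m=\triangle(e_0)+2$ at a minimizing edge $e_0$ makes $\mathbf{e}_{e_0}$ a minimizer of the Rayleigh quotient. Hence $\mathbf{e}_{e_0}$ is an eigenvector, so every off-diagonal entry $h_{e_0e'}$ must vanish. By Theorem \ref{H-def}, every edge $e'$ adjacent to $e_0$ then lies in a common triangle with $e_0$.

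I then want to propagate this to conclude $G$ is complete. If $G\ne K_n$, connectivity gives an induced path $P_3$, say $uvw$. I will take an edge minimizing $\triangle(e)$ among edges of an induced $P_3$ and, if needed, use the sharper bound from (ii). Part (ii) gives $\lambda_m\le \frac{\triangle(e_i)+\triangle(e_j)}{2}+1<\min(\triangle(e_i),\triangle(e_j))+2$ unless $\triangle(e_i)$ and $\triangle(e_j)$ differ by at least $2$. This is the delicate point, so the argument should be organized as follows.

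Assume equality. Then $\mathbf{e}_{e_0}$ is an eigenvector, and from that the minimality should spread. If $e_0=uv$ is not in every triangle with its neighbours, then $h_{e_0e'}\neq0$ for some $e'$, contradicting the eigenvector property. So every edge at $u$ or $v$ lies in a triangle with $e_0$, that is, $N(u)\setminus\{v\}=N(v)\setminus\{u\}$. Then any $e'=uw$ has $\triangle(e')\ge\triangle(e_0)$. I expect the main obstacle to be iterating this to all edges. I would try to show that $\triangle(uw)=\triangle(uv)$ fails unless closure holds, and otherwise use connectivity to find a vertex $x\sim w$ with $x\nsim u$. That vertex yields the induced $P_3$ given by $xwu$, to which the (ii)-type test vector applies, giving a strict inequality.

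For (ii), let $e_i=uv$ and $e_j=vw$ form an induced $P_3$, so $u\nsim w$. These edges are adjacent but not in a common triangle, so $|h_{e_ie_j}|=1$, with sign determined by the orientation. I choose $\mathbf{x}=\mathbf{e}_{e_i}+\varepsilon\mathbf{e}_{e_j}$ with $\varepsilon=\pm1$ chosen so that $2\varepsilon h_{e_ie_j}=-2$. Then
\[
\frac{\mathbf{x}^{\intercal}\mathcal{H}\mathbf{x}}{\mathbf{x}^{\intercal}\mathbf{x}}=\frac{\triangle(e_i)+2+\triangle(e_j)+2-2}{2}=\frac{\triangle(e_i)+\triangle(e_j)}{2}+1.
\]
Since $G\ne K_n$ is connected, an induced $P_3$ exists, so the minimum is over a nonempty set. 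Minimizing over all such pairs gives (ii).
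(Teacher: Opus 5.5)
\textbf{Verdict: gap in the equality case of (i).} Your bound in (i), the converse direction of the equality case, and your proof of (ii) are correct and match the paper: single-edge and two-edge test vectors in the Rayleigh quotient, with the sign chosen so the off-diagonal contribution is $-2$. The gap is in the forward direction of the equality case of (i), at the propagation step you yourself flag as the main obstacle. After deriving $N(u)\setminus\{v\}=N(v)\setminus\{u\}$ for $e_0=uv$, you record only $\triangle(uw)\ge\triangle(e_0)$. That is just the minimality of $e_0$ and gives you no leverage.

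Your fallback does not work either. You propose taking $x\sim w$, $x\nsim u$ and applying the (ii)-type vector to the induced path $xwu$. This gives $\lambda_m\le\frac{\triangle(xw)+\triangle(wu)}{2}+1$. That bound is below $\triangle(e_0)+2$ only if $\triangle(xw)+\triangle(wu)\le 2\triangle(e_0)+1$. Nothing in your argument bounds $\triangle(xw)$ from above, and it can be much larger than $\triangle(e_0)$. Without further input you do not even know $\triangle(wu)=\triangle(e_0)$. So no strict inequality, and hence no contradiction, follows.

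The missing idea is the reverse inequality $\triangle(uw)\le\triangle(uv)$, which comes from an injection.
\begin{itemize}
\item Take $w\in N(u)\setminus\{v\}$. Then $w\in N(v)$, so $v\in N(u)\cap N(w)$.
\item Any other $x\in N(u)\cap N(w)$ with $x\neq v$ lies in $N(u)\setminus\{v\}=N(v)\setminus\{u\}$, so $x\in N(u)\cap N(v)$. Also $x\neq w$.
\item Hence $v\mapsto w$, $x\mapsto x$ injects $N(u)\cap N(w)$ into $N(u)\cap N(v)$, which gives $\triangle(uw)\le\triangle(uv)$.
\end{itemize}
Combined with minimality, $\triangle(uw)=\triangle(e_0)$. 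So $\mathbf{e}_{uw}$ also has Rayleigh quotient $\triangle(uw)+2=\lambda_m$ and is again an eigenvector. Your own argument then gives $N(u)\setminus\{w\}=N(w)\setminus\{u\}$. Thus every edge adjacent to a minimizing edge is itself minimizing and has the same property.

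Since the line graph of a connected graph is connected, every edge $ab$ satisfies $N(a)\cup\{a\}=N(b)\cup\{b\}$. All closed neighbourhoods therefore coincide, and $G\cong K_n$. This is exactly the paper's argument, and it needs no appeal to (ii).
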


\begin{proof}
(i): For an edge $e$, it is clear that $\lambda_{\min} \le \frac{\alpha_e^{\intercal}H(G)\alpha_e}{\alpha_e^{\intercal}\alpha_e}=\Delta_e+2$, where $\alpha_{e}$ represents the eigenvector with the $e$th component being 1 and the rest being 0, and $\lambda_{\min}=\min \Delta(e)+2$ when $G\cong K_n$.

Conversely, assume that $\Delta(e_0)=\min \Delta(e)$ and $\lambda_{\min}=\Delta(e_0)+2$.  For an edge $e_1$ with $e_1\sim e_0$, we have $e_1\triangle e_0$. Otherwise, we face a contradiction by considering the $e_1$th entry of both sides of $H(G)\delta_{e_0}=\lambda\delta_{e_0}$. For a triangle $\{e_1,e_2,e_3\}$ containing $e_1$, exactly one  $i\in\{2,3\}$ satisfies $e_i\sim e_0$, and thus $e_i\triangle e_0$. This indicates that $\Delta(e_1)\le \Delta(e_0)$, and thus $\Delta(e_1)=\Delta(e_0)=\min\Delta(e)$. By regarding $e_1$ as $e_0$ and repeating the procedure, we conclude that $G\cong K_n$ since $G$ is connected. Thus, (i) follows.

(ii): For every pair of edges  $e_i$ and $e_j$ forming an induced $P_3$, we have either $e_i\leftrightarrow e_j$ or $e_i\overset{\pm}{\sim} e_j$. Let $\mathbf{x}$ be the vector indexed by the edges with $x_{e_i}=x_{e_j}=1$ and $x_e=0$ for $e\not\in\{e_i,e_j\}$ when $e_i\leftrightarrow e_j$, and  $x_{e_i}=-x_{e_j}=1$, $x_{e}=0$ for $e\not\in\{e_i,e_j\}$ when $e_i\overset{\pm}{\sim} e_j$. By a direct computation, we arrive at $$\lambda_m\le \frac{x^{\intercal}H(G)x}{x^{\intercal}x}=\frac{\Delta(e_i)+\Delta(e_j)}{2}+1,$$ and  (ii) follows.
\end{proof}

The second least Laplacian eigenvalue, also known as the algebraic connectivity, plays a significant role as it reveals many structural properties of a graph~\cite{fie-CMJ}. In the case of signed graphs, this role is attributed to the least Laplacian eigenvalue~\cite{Bel}. Accordingly, we conclude this part with the following problem.

\begin{problem}
Find relationships between the least or the second least $\mathcal{H}$-eigenvalue of a graph $G$ and its structural properties.
\end{problem}

\subsection{$\mathcal{H}$-nullity}\label{H47}

The multiplicity of the zero eigenvalue in the spectrum of a graph is the so-called \textit{nullity}. Denoted by  $\eta_M(G)$ the {\it nullity} of a graph $G$ with respect to a graph matrix $M(G)$. For the adjacency matrix $A(G)$, Collatz and Sinogowitz \cite{col-sin} have posed the problem of characterizing all graphs with $\eta_A(G) >0$. This question has strong chemical background, because $\eta_A(G) = 0$ is a necessary condition for a so-called conjugated molecule to be chemically stable, where $G$ is the graph representing the carbon-atom skeleton of this molecule. For the Laplacian matrix $L(G)$, it is well-known that the nullity of a graph is exactly the number of its connected components.

We proceed with $\mathcal{H}$-nullity and estimate it in terms of the order, the size and the number of triangles of $G$.

\begin{lem}\label{prop:zero}
Let $G$ be a graph with $m$ edges. Zero appears in the $\mathcal{H}$-spectrum of $G$ if and only if there exists a non-zero eigenvector $\mathbf{x}$ such that $\mathcal{B}^\intercal\mathbf{x}=\mathbf{0}$ and $\mathcal{C}\mathbf{x}=\mathbf{0}$.
\end{lem}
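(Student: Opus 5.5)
The plan is to use the decomposition \eqref{H=B+C}, $\mathcal{H}(G)=\mathcal{B}\mathcal{B}^{\intercal}+\mathcal{C}^{\intercal}\mathcal{C}$, which by Lemma~\ref{lem-1} is the matrix form of $\grad\grad^*+\curl^*\curl$. The key observation is that the quadratic form of $\mathcal{H}(G)$ splits as a sum of two squared norms:
\[
\mathbf{x}^{\intercal}\mathcal{H}(G)\mathbf{x}=\mathbf{x}^{\intercal}\mathcal{B}\mathcal{B}^{\intercal}\mathbf{x}+\mathbf{x}^{\intercal}\mathcal{C}^{\intercal}\mathcal{C}\mathbf{x}=\|\mathcal{B}^{\intercal}\mathbf{x}\|^2+\|\mathcal{C}\mathbf{x}\|^2 ,
\]
valid for every $\mathbf{x}\in\mathbb{R}^m$. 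This identity already underlies the positive semi-definiteness recalled from \cite{li-lu-wang}. Both directions of the lemma then follow from it.

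For sufficiency, suppose $\mathbf{x}\neq\mathbf{0}$ satisfies $\mathcal{B}^{\intercal}\mathbf{x}=\mathbf{0}$ and $\mathcal{C}\mathbf{x}=\mathbf{0}$. Then we compute directly
\[
\mathcal{H}(G)\mathbf{x}=\mathcal{B}(\mathcal{B}^{\intercal}\mathbf{x})+\mathcal{C}^{\intercal}(\mathcal{C}\mathbf{x})=\mathbf{0},
\]
so $\mathbf{x}$ is an eigenvector for the eigenvalue $0$. Hence zero lies in the $\mathcal{H}$-spectrum.

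For necessity, suppose $0$ is an $\mathcal{H}$-eigenvalue with eigenvector $\mathbf{x}\neq\mathbf{0}$, so $\mathcal{H}(G)\mathbf{x}=\mathbf{0}$. Multiplying on the left by $\mathbf{x}^{\intercal}$ and applying the identity above gives
\[
0=\mathbf{x}^{\intercal}\mathcal{H}(G)\mathbf{x}=\|\mathcal{B}^{\intercal}\mathbf{x}\|^2+\|\mathcal{C}\mathbf{x}\|^2 .
\]
Both terms are nonnegative, so each must vanish. Therefore $\mathcal{B}^{\intercal}\mathbf{x}=\mathbf{0}$ and $\mathcal{C}\mathbf{x}=\mathbf{0}$, as required.

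I expect no genuine obstacle here. The only point needing care is to pass through the quadratic form in the necessity direction. One cannot argue directly from $\mathcal{B}\mathcal{B}^{\intercal}\mathbf{x}+\mathcal{C}^{\intercal}\mathcal{C}\mathbf{x}=\mathbf{0}$ that each summand is zero. It is the positive semi-definiteness of both summands, made explicit by the squared-norm identity, that forces each one to vanish separately.

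As a byproduct, the same argument shows that $\ker\mathcal{H}(G)=\ker\mathcal{B}^{\intercal}\cap\ker\mathcal{C}$. This is the form I would record for the subsequent nullity computation.
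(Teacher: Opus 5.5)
Your proof is correct and follows essentially the same route as the paper. In both, necessity comes from writing $0=\mathbf{x}^{\intercal}\mathcal{H}(G)\mathbf{x}=\|\mathcal{B}^{\intercal}\mathbf{x}\|^2+\|\mathcal{C}\mathbf{x}\|^2$ and forcing each nonnegative term to vanish, sufficiency comes from direct substitution, and your byproduct $\ker\mathcal{H}(G)=\ker\mathcal{B}^{\intercal}\cap\ker\mathcal{C}$ is exactly what the paper uses next in Lemma~\ref{lem-a-1}.
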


\begin{proof}Suppose that 0  is the $\mathcal{H}$-spectrum of $G$, and let $\mathbf{x}$ be a corresponding eigenvector. Then $0=\mathbf{x}^\intercal (\mathcal{B}\mathcal{B}^\intercal+\mathcal{C}^\intercal\mathcal{C})\mathbf{x}=\mathbf{x}^\intercal (\mathcal{B}\mathcal{B}^\intercal)\mathbf{x}+\mathbf{x}^\intercal(\mathcal{C}^\intercal\mathcal{C})\mathbf{x}$. Since both  $\mathcal{B}\mathcal{B}^\intercal$ and $\mathcal{C}^\intercal\mathcal{C}$ are positive semi-definite,
	both summands on the right hand side are $\geq 0$. Since they are summing up to 0, they are equal to 0, which means that $\mathbf{x}$ is an eigenvector to 0 for both $\mathcal{B}\mathcal{B}^\intercal$ and $\mathcal{C}^\intercal\mathcal{C}$. For the former matrix, this yields $(\mathcal{B}^\intercal \mathbf{x})^\intercal \mathcal{B}^\intercal \mathbf{x}=0$, giving   $\mathcal{B}^\intercal \mathbf{x}=\mathbf{0}$. The identity $\mathcal{C} \mathbf{x}$ is obtained in a similar way.
	
	The opposite implication follows directly as $\mathcal{B}^\intercal\mathbf{x}=\mathcal{C}\mathbf{x}=\mathbf{0}$ yields $(\mathcal{B}\mathcal{B}^\intercal+\mathcal{C}^\intercal\mathcal{C})\mathbf{x}=\mathbf{0}$.
\end{proof}

\begin{lem}\label{lem-a-1}
Let $G$ be a graph with size $m(G)$. Then
\[\eta_{\mathcal{H}}(G)=m(G)-{\rm rank}\left(\begin{array}{c}\mathcal{B}(G)^{\intercal}\\\mathcal{C}(G)\end{array}\right),\]
where $\mathcal{B}(G)$ and $\mathcal{C}(G)$ are defined in \eqref{B-C}.
\end{lem}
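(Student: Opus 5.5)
The plan is to identify the null space of $\mathcal{H}(G)$ with the null space of the stacked matrix $M=\begin{pmatrix}\mathcal{B}(G)^{\intercal}\\ \mathcal{C}(G)\end{pmatrix}$, which is an $(n+t)\times m$ matrix. The statement then follows from rank--nullity. Since $\mathcal{H}(G)$ is real symmetric, it is diagonalizable. Hence the multiplicity $\eta_{\mathcal{H}}(G)$ of the eigenvalue $0$ equals $\dim\ker\mathcal{H}(G)$; there is no gap between algebraic and geometric multiplicity.

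First I will show $\ker\mathcal{H}(G)=\ker M$, and this is essentially Lemma~\ref{prop:zero} phrased as an equality of subspaces. If $\mathcal{H}(G)\mathbf{x}=\mathbf{0}$, then $\mathbf{x}^{\intercal}\mathcal{H}(G)\mathbf{x}=\|\mathcal{B}^{\intercal}\mathbf{x}\|^2+\|\mathcal{C}\mathbf{x}\|^2=0$. This forces $\mathcal{B}^{\intercal}\mathbf{x}=\mathbf{0}$ and $\mathcal{C}\mathbf{x}=\mathbf{0}$, that is, $M\mathbf{x}=\mathbf{0}$. Conversely, if $M\mathbf{x}=\mathbf{0}$, then $\mathcal{H}(G)\mathbf{x}=\mathcal{B}(\mathcal{B}^{\intercal}\mathbf{x})+\mathcal{C}^{\intercal}(\mathcal{C}\mathbf{x})=\mathbf{0}$. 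The argument in the proof of Lemma~\ref{prop:zero} already gives both inclusions vector by vector; I will just note that it applies to every vector of the kernel, not only to one eigenvector.

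Alternatively, and more compactly, I can observe that $\mathcal{H}(G)=\mathcal{B}\mathcal{B}^{\intercal}+\mathcal{C}^{\intercal}\mathcal{C}=M^{\intercal}M$. This identity holds because $M^{\intercal}=(\mathcal{B}\;\;\mathcal{C}^{\intercal})$. Since $\operatorname{rank}(M^{\intercal}M)=\operatorname{rank}(M)$ for real matrices, which is the same kernel argument, we get $\operatorname{rank}\mathcal{H}(G)=\operatorname{rank}M$.

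Finally, rank--nullity for the $m\times m$ matrix $\mathcal{H}(G)$ gives
\[\eta_{\mathcal{H}}(G)=m(G)-\operatorname{rank}\mathcal{H}(G)=m(G)-\operatorname{rank}M.\]
There is no real obstacle here. The only point needing care is that the nullity is defined as an eigenvalue multiplicity, and it is symmetry that lets us equate it with $\dim\ker\mathcal{H}(G)$. I would state that explicitly, and also remark that the result is independent of the chosen orientations, since changing orientations only multiplies rows and columns of $M$ by $\pm1$.
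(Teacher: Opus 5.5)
Your proposal is correct and follows the same route as the paper: you use the positive-semidefiniteness argument of Lemma~\ref{prop:zero} to show that $\ker\mathcal{H}(G)$ equals the kernel of the stacked matrix $M$, and then apply rank--nullity. Two of your remarks make explicit what the paper leaves implicit: that the argument of Lemma~\ref{prop:zero} applies to every kernel vector, not just one eigenvector, and that symmetry identifies the multiplicity of $0$ with $\dim\ker\mathcal{H}(G)$.
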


\begin{proof}
Given an arbitrary orientation to $E(G)$ and $T(G)$, by Lemma \ref{prop:zero}  we get  $$\mathcal{H}(G){\rm \bf x}=(\mathcal{B}(G)\mathcal{B}(G)^{\intercal}+\mathcal{C}(G)^{\intercal}\mathcal{C}(G)){\rm \bf x}=\mathbf{0}$$ holds if and only if $\mathcal{C}(G){\rm \bf x}=\mathbf{0}$ and $\mathcal{B}(G)^{\intercal}{\rm \bf x}=\mathbf{0}$, which is equivalent to $$\left(\begin{array}{c}B(G)^{\intercal}\\\mathcal{C}(G)\end{array}\right){\rm \bf x}=\mathbf{0}.$$ Thereby, $\eta_{\mathcal{H}}(G)=m(G)-{\rm rank}\left(\begin{array}{c}\mathcal{B}(G)^{\intercal}\\\mathcal{C}(G)\end{array}\right)$.
\end{proof}

Note that ${\rm rank}(\mathcal{B}^{\intercal}(G))={\rm rank}(\mathcal{B}(G))$ and ${\rm rank}(\mathcal{C}(G))=t_G(\triangle)$, the number of triangles in $G$. Thus, we immediately get the following result.

\begin{cor}\label{cor-a-1}
Let $G$ be a graph with arbitrary orientations on $E(G)$ and $T(G)$. Then
\[\eta_{\mathcal{H}}(G)=m(G)-t_G(\triangle)-{\rm rank}(\mathcal{B}(G)).\]
\end{cor}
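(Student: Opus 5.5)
The plan is to start from Lemma~\ref{lem-a-1}, which already gives
\[
\eta_{\mathcal{H}}(G)=m(G)-{\rm rank}\begin{pmatrix}\mathcal{B}(G)^{\intercal}\\ \mathcal{C}(G)\end{pmatrix},
\]
and to split the rank of the stacked matrix into the two ranks ${\rm rank}(\mathcal{B}^{\intercal})$ and ${\rm rank}(\mathcal{C})$. After that, the two facts recorded just before the statement, namely ${\rm rank}(\mathcal{B}^{\intercal})={\rm rank}(\mathcal{B})$ and ${\rm rank}(\mathcal{C})=t_G(\triangle)$, turn the formula into the claimed one.

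\textbf{Step 1 (the splitting).} The rank of the stacked matrix is the dimension of the sum of the row space of $\mathcal{B}^{\intercal}$ (that is, the column space of $\mathcal{B}$) and the row space of $\mathcal{C}$, both viewed as subspaces of $\mathbb{R}^m$. It suffices to show that these two subspaces are orthogonal, which amounts to the identity $\mathcal{C}\mathcal{B}=O$.

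This identity is the discrete statement $\curl\,\grad=0$, which follows from Lemma~\ref{lem-1}. It can also be checked entrywise. For a triangle $\vartriangle$ and a vertex $v$, the entry $(\mathcal{C}\mathcal{B})_{\vartriangle v}$ equals $\sum_{e\in\vartriangle,\,v\in e}c_{\vartriangle e}b_{ev}$. This entry is zero unless $v\in\vartriangle$, in which case exactly two edges of $\vartriangle$ contain $v$. Traversing $\vartriangle$ along its orientation, one of these edges enters $v$ and the other leaves $v$. The products $c_{\vartriangle e}b_{ev}$ for these two edges are therefore $+1$ and $-1$, whatever the chosen orientations of the edges are, so they cancel. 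Since the two row spaces are orthogonal, their sum is direct, and
\[
{\rm rank}\begin{pmatrix}\mathcal{B}^{\intercal}\\ \mathcal{C}\end{pmatrix}={\rm rank}(\mathcal{B}^{\intercal})+{\rm rank}(\mathcal{C}).
\]

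\textbf{Step 2 (substitution).} Substitute ${\rm rank}(\mathcal{B}^{\intercal})={\rm rank}(\mathcal{B})$, which holds for any matrix, and ${\rm rank}(\mathcal{C})=t_G(\triangle)$, as noted before the statement. This yields $\eta_{\mathcal{H}}(G)=m(G)-t_G(\triangle)-{\rm rank}(\mathcal{B}(G))$. For a connected graph one may also add the standard value ${\rm rank}(\mathcal{B})=n-c(G)$, where $c(G)$ is the number of components, for concreteness.

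\textbf{Main obstacle.} The crux is the equality ${\rm rank}(\mathcal{C})=t_G(\triangle)$, i.e.\ the linear independence of the triangle rows of $\mathcal{C}$. To justify it I would first try a leading-entry argument: order the triangles and, for each one, find an edge that does not lie in any earlier triangle. This argument must be checked carefully, because dependencies arise when some triangles bound a tetrahedron, as happens in $K_4$. So this is the step where I expect the real difficulty to lie.
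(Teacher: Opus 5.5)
Your Step~1 is correct, and it fills in something the paper's one-line derivation leaves implicit. The identity $\mathcal{C}\mathcal{B}=O$ (the matrix form of $\curl\,\grad=0$) is exactly what makes the rank of the stacked matrix in Lemma~\ref{lem-a-1} split as ${\rm rank}(\mathcal{B})+{\rm rank}(\mathcal{C})$.

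The genuine gap is the step you flagged, and it cannot be closed: ${\rm rank}(\mathcal{C}(G))=t_G(\triangle)$ is false in general, and so is the corollary. In $K_4$, orient each edge $i\to j$ for $i<j$ and each triangle as $i\to j\to k\to i$ for $i<j<k$, and write $r_{ijk}$ for the row of $\mathcal{C}$ indexed by $\{i,j,k\}$. Then $r_{234}-r_{134}+r_{124}-r_{123}=\mathbf{0}$ (this is $\partial_2\partial_3=0$ for the tetrahedron), so ${\rm rank}(\mathcal{C}(K_4))=3<4$. Your leading-entry ordering breaks down for exactly this reason: any three triangles of $K_4$ already cover all six edges.

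The formula itself also fails for $K_4$. Every edge lies in two triangles and any two adjacent edges lie in a common triangle, so $\mathcal{H}(K_4)=4I$ and $\eta_{\mathcal{H}}(K_4)=0$. The claimed formula instead gives $m-t-{\rm rank}(\mathcal{B})=6-4-3=-1$, and for $K_n$ the right-hand side equals $-\binom{n-1}{3}$. Forbidding $K_4$ does not rescue it. The octahedron $K_{2,2,2}$, whose eight triangles form a $2$-sphere, has $\eta_{\mathcal{H}}=0$ by Proposition~\ref{H-Kn1nk}, while the formula gives $12-8-5=-1$.

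What your argument actually establishes is
\[
\eta_{\mathcal{H}}(G)=m(G)-{\rm rank}(\mathcal{B}(G))-{\rm rank}(\mathcal{C}(G))=m(G)-{\rm rank}(\mathcal{B}(G))-t_G(\triangle)+\dim\ker\mathcal{C}(G)^{\intercal},
\]
where $\ker\mathcal{C}(G)^{\intercal}$ is the space of $2$-cycles of the complex formed by the vertices, edges and triangles of $G$. Equivalently, $\eta_{\mathcal{H}}(G)$ is the first Betti number of that complex. The stated formula is therefore correct precisely when $\mathcal{C}(G)$ has full row rank, for example when your ordering of the triangles exists, and this hypothesis must be added.

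The paper's own proof rests on the same unproved assertion ${\rm rank}(\mathcal{C}(G))=t_G(\triangle)$. So the defect lies in the statement itself, not only in your write-up, and it carries over to Lemma~\ref{lem-4} and Theorem~\ref{thm-1}.
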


We proceed with a lemma.

\begin{lem}\label{lem-3}
If $G$ has a pendent vertex $v$ and $G'=G-v$, then $\eta_{\mathcal{H}}(G)=\eta_{\mathcal{H}}(G')$.
\end{lem}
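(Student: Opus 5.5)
Let $u$ be the unique neighbour of the pendent vertex $v$ and $e_0=vu$. The plan is to use Corollary~\ref{cor-a-1}, which gives
\[\eta_{\mathcal{H}}(G)=m(G)-t_G(\triangle)-{\rm rank}(\mathcal{B}(G)),\]
and to compare each of the three quantities for $G$ and $G'=G-v$. Two of them are immediate. First, $m(G)=m(G')+1$, since deleting $v$ removes exactly the edge $e_0$. Second, $t_G(\triangle)=t_{G'}(\triangle)$, because $v$ has degree one and so lies in no triangle; hence $T(G)=T(G')$ and $\triangle(e_0)=0$.

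It remains to prove ${\rm rank}(\mathcal{B}(G))={\rm rank}(\mathcal{B}(G'))+1$. Orient $e_0$ arbitrarily. Then $\mathcal{B}(G)$ is obtained from $\mathcal{B}(G')$ by adding a new column indexed by $v$ and a new row indexed by $e_0$. The column of $v$ has a single nonzero entry, $\pm1$, located in row $e_0$; the row $e_0$ additionally carries $\mp1$ in column $u$. Ordering rows and columns so that $e_0$ and $v$ come last gives
\[\mathcal{B}(G)=\begin{pmatrix}\mathcal{B}(G') & \mathbf{0}\\ \mathbf{r}^{\intercal} & \pm1\end{pmatrix}.\]
Adding a suitable multiple of the last column to the column of $u$ clears $\mathbf{r}$, and this column operation preserves rank. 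The resulting matrix is block triangular with blocks $\mathcal{B}(G')$ and the nonzero scalar $\pm1$, so the rank increases by exactly one. One can also argue this way: ${\rm rank}(\mathcal{B})$ equals $n$ minus the number of components, and attaching a pendent vertex raises $n$ by one while leaving the number of components unchanged.

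Substituting the three facts into Corollary~\ref{cor-a-1}, the $+1$ in $m$ cancels the $+1$ in ${\rm rank}(\mathcal{B})$, which yields $\eta_{\mathcal{H}}(G)=\eta_{\mathcal{H}}(G')$. No step is a real obstacle. The only delicate point is the degenerate case where $G'$ is edgeless, for example $G=K_2$. There the convention that $\mathcal{H}(G')$ is the empty matrix gives $\eta_{\mathcal{H}}(G')=0$. Directly, $\mathcal{H}(K_2)=(2)$, so $\eta_{\mathcal{H}}(G)=0$ as well, and the statement still holds.
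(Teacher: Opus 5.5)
Your argument is the paper's own argument. The paper likewise feeds $m(G')=m(G)-1$, $t_{G'}(\triangle)=t_G(\triangle)$ and ${\rm rank}\,\mathcal{B}(G')={\rm rank}\,\mathcal{B}(G)-1$ into Corollary~\ref{cor-a-1}, and your block-triangular computation just makes the rank step explicit.

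The problem is that the formula you substitute into is false as stated. Corollary~\ref{cor-a-1} is obtained from Lemma~\ref{lem-a-1} via the claim ${\rm rank}\,\mathcal{C}(G)=t_G(\triangle)$, and that claim fails whenever the triangles carry a nontrivial 2-cycle. Take $G=K_4$. A suitable $\pm1$ combination of the four rows of $\mathcal{C}(K_4)$ vanishes, because the four faces form the boundary of a tetrahedron, so ${\rm rank}\,\mathcal{C}(K_4)=3<4$. The corollary would then give $\eta_{\mathcal{H}}(K_4)=6-4-3=-1$, whereas $\mathcal{H}(K_4)=4I$ has nullity $0$. So the last step of your proof invokes an incorrect identity, even though the lemma itself is true.

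The repair keeps your structure.
\begin{itemize}
\item Since $\mathcal{C}\mathcal{B}=O$ (the curl of a gradient vanishes), the row spaces of $\mathcal{B}^{\intercal}$ and $\mathcal{C}$ are orthogonal subspaces of $\mathbb{R}^m$. Lemma~\ref{lem-a-1} therefore yields the correct formula $\eta_{\mathcal{H}}(G)=m(G)-{\rm rank}\,\mathcal{B}(G)-{\rm rank}\,\mathcal{C}(G)$.
\item Because $e_0$ lies in no triangle, $\mathcal{C}(G)$ is just $\mathcal{C}(G')$ with a zero column appended for $e_0$. Hence ${\rm rank}\,\mathcal{C}(G)={\rm rank}\,\mathcal{C}(G')$.
\item Combining this with $m(G)=m(G')+1$ and your computation ${\rm rank}\,\mathcal{B}(G)={\rm rank}\,\mathcal{B}(G')+1$ gives $\eta_{\mathcal{H}}(G)=\eta_{\mathcal{H}}(G')$.
\end{itemize}
Alternatively, apply your block argument directly to the stacked matrix of Lemma~\ref{lem-a-1}. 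Its row indexed by $v$ is zero except for $\pm1$ in column $e_0$, and column $e_0$ is nonzero only in rows $u$ and $v$. So passing from $G'$ to $G$ raises its rank by exactly one, while $m$ also rises by one.
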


\begin{proof}
Given arbitrary orientations to $E(G)$ and $T(G)$, we deduce $m(G')=m(G)-1$, ${\rm rank}(\mathcal{B}(G'))={\rm rank}(\mathcal{B}(G))-1$ and $t_{G'}(\triangle)=t_G(\triangle)$. Hence, the result follows from Corollary \ref{cor-a-1}.
\end{proof}

For a vertex $v$ of a graph $G$, let $N_G(v)$ denote the set of the neighbours of $v$. If $e=uv$ such that $N_G(u)\cap N_G(v) = \emptyset$, the {\it contraction} of $e$ is the replacement of $u$ and $v$ with a single vertex whose incident edges are the edges other than $e$ that are incident to $u$ or $v$, and the resulting graph is denoted by $G/e$.

\begin{lem}\label{lem-4}
Let $G$ be a graph with edge $e=uv$ such that $N(u)\cap N(v)=\emptyset$. If $G'=G/e$, then $$\eta_{\mathcal{H}}(G)=\eta_{\mathcal{H}}(G')+\triangle_{G'}(e).$$
\end{lem}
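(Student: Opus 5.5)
The plan is to reduce everything to the counting formula of Corollary~\ref{cor-a-1},
\[\eta_{\mathcal{H}}(G)=m(G)-t_G(\triangle)-{\rm rank}(\mathcal{B}(G)),\]
and compare each of the three terms for $G$ and $G'=G/e$, in the same spirit as the proof of Lemma~\ref{lem-3}. Orientations on $E(G)$, $T(G)$, $E(G')$ and $T(G')$ may be chosen arbitrarily, because the corollary does not depend on them. Throughout, $w$ denotes the vertex of $G'$ obtained by identifying $u$ and $v$.

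First, the edge and rank terms. Since $N(u)\cap N(v)=\emptyset$, no two edges of $G$ other than $e$ are merged into a parallel pair, and no loop arises. Hence $G'$ is simple and $m(G')=m(G)-1$. For the rank term, recall that ${\rm rank}(\mathcal{B}(G))=n(G)-c(G)$, where $c$ is the number of connected components; this is the standard rank of the oriented incidence matrix, so $\mathcal{B}^{\intercal}$ has kernel spanned by the component indicator vectors. Contracting an edge lowers the order by one and does not change the number of components. Therefore ${\rm rank}(\mathcal{B}(G'))={\rm rank}(\mathcal{B}(G))-1$.

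Second, the triangle term, which is the main point. Because $u$ and $v$ have no common neighbour, $e$ lies in no triangle of $G$. Every triangle of $G$ avoids $e$, and it maps injectively to a triangle of $G'$: if it contains $u$ or $v$, that vertex is replaced by $w$, and adjacency to $w$ is preserved. The triangles of $G'$ not arising this way are exactly those of the form $\{w,x,y\}$ with $x\in N(u)$, $y\in N(v)$ and $xy\in E(G)$, that is, the triangles created by collapsing a $4$-cycle $u\,x\,y\,v$ through $e$. These are the triangles of $G'$ counted by $\triangle_{G'}(e)$, which I read as the number of new triangles at the contracted vertex. So
\[t_{G'}(\triangle)=t_G(\triangle)+\triangle_{G'}(e).\]
Care is needed to check the bijection: triangles of $G$ through $u$ (or $v$) go to distinct triangles through $w$, and a triangle $\{w,x,y\}$ comes from $G$ exactly when $x,y$ are both in $N(u)$ or both in $N(v)$.

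Finally, substitute the three relations into Corollary~\ref{cor-a-1}:
\[\eta_{\mathcal{H}}(G)=(m(G')+1)-\big(t_{G'}(\triangle)-\triangle_{G'}(e)\big)-\big({\rm rank}(\mathcal{B}(G'))+1\big)=\eta_{\mathcal{H}}(G')+\triangle_{G'}(e).\]
The step I expect to be the main obstacle is the triangle bookkeeping, in particular making precise what $\triangle_{G'}(e)$ means for the contracted vertex. The rest is a direct application of Corollary~\ref{cor-a-1} together with the rank identity for $\mathcal{B}$.
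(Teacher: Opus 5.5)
Your argument follows the paper's proof: reduce to Corollary~\ref{cor-a-1} and compare $m$, $t$ and ${\rm rank}\,\mathcal{B}$ for $G$ and $G/e$. The paper gets ${\rm rank}\,\mathcal{B}(G)={\rm rank}\,\mathcal{B}(G')+1$ by a block row-reduction of $\mathcal{B}^{\intercal}$ instead of ${\rm rank}\,\mathcal{B}=n-w$. It also writes $t_{G'}(\triangle)=t_G(\triangle)-\triangle_G(e)$, which is a slip. Your relation $t_{G'}(\triangle)=t_G(\triangle)+\triangle_{G'}(e)$, with $\triangle_{G'}(e)$ read as the number of new triangles $\{w,x,y\}$, is the one Theorem~\ref{thm-1} actually uses. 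Your bookkeeping of all three quantities is correct. The problem, which the paper's proof shares, is the tool. Corollary~\ref{cor-a-1} rests on ${\rm rank}\,\mathcal{C}(G)=t_G(\triangle)$, and this fails whenever the triangles carry a $2$-cycle. In $K_4$, the four face rows of $\mathcal{C}$, oriented as the boundary of a tetrahedron, sum to zero, so ${\rm rank}\,\mathcal{C}(K_4)=3$. The corollary would then give $\eta_{\mathcal{H}}(K_4)=6-4-3=-1$, whereas $\mathcal{H}(K_4)=4I$.

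This breaks the statement itself, not only the proof. Take $K_4$ on $\{u,a,b,c\}$ and subdivide $uc$ by a new vertex $v$. Then $N(u)\cap N(v)=\emptyset$ and $G/uv\cong K_4$, with two new triangles $\{w,a,c\}$ and $\{w,b,c\}$. But $G$ has a cycle space of dimension $7-5+1=3$ and two independent triangles, so $\eta_{\mathcal{H}}(G)=3-2=1\neq 0+2$. The paper's literal reading $\triangle_G(e)=0$ fails too, since $\eta_{\mathcal{H}}(G)\neq\eta_{\mathcal{H}}(G')$.

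What is true is the following. Since $\mathcal{C}\mathcal{B}=O$, the row spaces of $\mathcal{B}^{\intercal}$ and $\mathcal{C}$ are orthogonal, so Lemma~\ref{lem-a-1} gives $\eta_{\mathcal{H}}(G)=m-n+w-{\rm rank}\,\mathcal{C}(G)$. Your $m$- and $\mathcal{B}$-bookkeeping then yields $\eta_{\mathcal{H}}(G)=\eta_{\mathcal{H}}(G')+{\rm rank}\,\mathcal{C}(G')-{\rm rank}\,\mathcal{C}(G)$. Up to sign and a zero column at $e$, the rows of $\mathcal{C}(G)$ are the rows of $\mathcal{C}(G')$ indexed by the old triangles. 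So this difference equals the number of new triangles exactly when the new rows are independent modulo the old ones, i.e., when no new $2$-cycle is created; this holds, for example, if $\mathcal{C}(G')$ has full row rank. The lemma needs a hypothesis of this kind, and so does Theorem~\ref{thm-1}, which also fails for $K_4$.
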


\begin{proof}
We assign orientations to $E(G)$ and $T(G)$ in such a way that $u$  and $v$ are respectively the tail and  the head of every edge incident with them. Clearly, $m(G') = m(G)-1$ and $t_{G'}(\triangle)=t_G(\triangle)-\triangle_G(e)$. It suffices to show that ${\rm rank}(\mathcal{B}(G))={\rm rank}(\mathcal{B}(G'))+1$ by Corollary \ref{cor-a-1}. For any vertex $z$, let $E_z=\{e\mid z\in \mathcal{V}(e)\}$. Denote  $E_1=E(G)\setminus(E_u\cup E_v)$, $E_2=E_u\setminus\{e\}$, $E_3=E_v\setminus\{e\}$, and let $u'$ be the vertex in $G'$ that replaces $u$ and $v$ in $G$. Then
\[
\mathcal{B}(G)^{\intercal}=\!\!\!\!\!\!\!\begin{array}{cc}
&\begin{array}{cccc}
E_1&E_2&E_3&\{e\}
\end{array}\\
\begin{array}{c}
u\\v\\ V\setminus\{u,v\}
\end{array}&
\left(\begin{array}{cccc}
\mathbf{0}&-\mathbf{1}^{\intercal}&\mathbf{0}&-1\\
\mathbf{0}&\mathbf{0}&\mathbf{1}^{\intercal}&1\\
B_1&B_2&B_3&\mathbf{0}
\end{array}\right)
\end{array} \;\; {\mbox{and}} \;\;
\mathcal{B}(G')^{\intercal}=\!\!\!\!\!\!\begin{array}{cc}
&\begin{array}{ccc}
E_1&E_2&E_3
\end{array}\\
\begin{array}{c}
u'\\ V'\setminus\{u'\}
\end{array}&
\left(\begin{array}{ccc}
\mathbf{0}&-\mathbf{1}^{\intercal}&\mathbf{1}\\
B_1&B_2&B_3
\end{array}\right)
\end{array}.
\]

If the $u'$th row of $B(G')^{\intercal}$ can be represented as a linear combination of  the other rows, then the same holds for the sum of the $u$th row and the $v$th row of $B(G)^{\intercal}$, and thus $${\rm rank}(\mathcal{B}(G'))={\rm rank}(B_1\; B_2\; B_3) \;\; {\mbox{and}} \;\; {\rm rank}(\mathcal{B}(G))={\rm rank}(B_1\; B_2\; B_3)+1.$$ Hence, ${\rm rank}(\mathcal{B}(G))= {\rm rank}(\mathcal{B}(G'))+1$. If the $u'$th row cannot be represented by the other rows, then $${\rm rank}(\mathcal{B}(G'))= {\rm rank}(B_1\; B_2\; B_3)+1\;\; {\mbox{and}} \;\; {\rm rank}(\mathcal{B}(G))= {\rm rank}(B_1\; B_2\; B_3)+2,$$ and we still have ${\rm rank}(\mathcal{B}(G))= {\rm rank}(\mathcal{B}(G'))+1$.
\end{proof}

The following result follows from the previous lemma.

\begin{cor}\label{cor-1}
Let $G$ be a graph with a cut-edge $e$. Then $\eta_{\mathcal{H}}(G)=\eta_{\mathcal{H}}(G/e)$.
\end{cor}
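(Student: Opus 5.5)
This follows from Lemma~\ref{lem-4} applied to the cut-edge $e=uv$. The only thing to check is that the hypotheses of that lemma hold and that the correction term $\triangle_{G'}(e)$ vanishes.

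\emph{Hypothesis of Lemma~\ref{lem-4}.} We verify $N(u)\cap N(v)=\emptyset$. Suppose some $w$ were adjacent to both $u$ and $v$. Then $uvw$ would be a triangle, so $e$ would lie on the cycle $u\,v\,w\,u$. An edge on a cycle is not a cut-edge, a contradiction. Hence $N(u)\cap N(v)=\emptyset$, the contraction $G/e$ is defined as in the paper, and Lemma~\ref{lem-4} gives
\[\eta_{\mathcal{H}}(G)=\eta_{\mathcal{H}}(G/e)+\triangle(e).\]

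\emph{The correction term.} In the proof of Lemma~\ref{lem-4}, this term arises from the identity $t_{G'}(\triangle)=t_G(\triangle)-\triangle_G(e)$, i.e.\ it counts the triangles of $G$ containing $e$. By the same argument as above, $e$ lies on no cycle and therefore in no triangle, so $\triangle_G(e)=0$.

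\emph{Possible new triangles.} One should also make sure the contraction does not create triangles, so that the triangle counts really match. A new triangle in $G/e$ would have to use the merged vertex $u'$ together with some $x\in N(u)\setminus\{v\}$ and $y\in N(v)\setminus\{u\}$ with $xy\in E$. That would give the cycle $u\,x\,y\,v\,u$ in $G$ through $e$, again impossible. Hence $t_{G/e}=t_G$, consistent with the count used in Lemma~\ref{lem-4}.

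Combining these gives $\eta_{\mathcal{H}}(G)=\eta_{\mathcal{H}}(G/e)$. The only delicate point is this bookkeeping of triangles. Alternatively, one can argue directly from Corollary~\ref{cor-a-1}: contracting $e$ reduces $m$ by $1$, leaves $t$ unchanged, and reduces $\operatorname{rank}\mathcal{B}$ by $1$, since the rank of $\mathcal{B}$ is $n$ minus the number of components and contraction lowers $n$ by one without changing connectivity.
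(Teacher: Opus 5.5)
Your proof is correct and is essentially the paper's argument: the paper derives the corollary directly from Lemma~\ref{lem-4}, with the correction term vanishing for a cut-edge, and your alternative count via Corollary~\ref{cor-a-1} is just that lemma's proof specialized to a cut-edge. One remark: the hypothesis $N(u)\cap N(v)=\emptyset$ already forces $\triangle_G(e)=0$, so the correction term in Lemma~\ref{lem-4} has to be read as the number of triangles of $G/e$ created by the contraction (e.g.\ contracting an edge of $C_4$ gives $K_3$ and drops the nullity from $1$ to $0$); hence it is your ``possible new triangles'' paragraph showing $t_{G/e}=t_G$, not the observation $\triangle_G(e)=0$, that actually carries the argument.
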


The next result follows from the fact that each edge of a tree is a cut-egde and Corollary \ref{cor-1}.

\begin{cor}\label{T-ni}
Let $\mathcal{T}$ be a tree. Then, $\eta_{\mathcal{H}}(\mathcal{T})=0$, and so $\mathcal{H}(\mathcal{T})$ is an invertible matrix.
\end{cor}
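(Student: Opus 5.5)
The statement to prove is Corollary~\ref{T-ni}: every tree $\mathcal{T}$ has $\eta_{\mathcal{H}}(\mathcal{T})=0$. I will argue by induction on the number of edges $m$ of $\mathcal{T}$, contracting one edge at a time with Corollary~\ref{cor-1}.

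\textbf{Base case.} Take $m=1$, so $\mathcal{T}\cong K_2$. Here $\mathcal{H}(K_2)=(2)$, since $\triangle(e)=0$. This matrix is nonsingular, so $\eta_{\mathcal{H}}(K_2)=0$.

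\textbf{Inductive step.} Let $m\ge 2$ and pick any edge $e=uv$ of $\mathcal{T}$.
\begin{itemize}
\item Since $\mathcal{T}$ is acyclic, $e$ is a cut-edge and $N(u)\cap N(v)=\emptyset$. So the contraction $\mathcal{T}/e$ is defined as in the paper.
\item The graph $\mathcal{T}/e$ is connected, has $m-1\ge 1$ edges, and is again a tree: contracting an edge of a tree creates no cycle and keeps connectivity.
\item Corollary~\ref{cor-1} gives $\eta_{\mathcal{H}}(\mathcal{T})=\eta_{\mathcal{H}}(\mathcal{T}/e)$.
\item The induction hypothesis gives $\eta_{\mathcal{H}}(\mathcal{T}/e)=0$.
\end{itemize}

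\textbf{Alternative route.} One could instead remove a pendant vertex via Lemma~\ref{lem-3}, or argue directly from Corollary~\ref{cor-a-1}. A tree has no triangles, so $t=0$. Its incidence matrix has rank $n-1=m$, because a connected graph has incidence rank $n-1$. Then $\eta_{\mathcal{H}}=m-0-m=0$, which confirms the count.

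\textbf{Invertibility.} By Theorem~\ref{H-def}, $\mathcal{H}(\mathcal{T})$ is a real symmetric matrix. Nullity zero means $0$ is not an eigenvalue, so $\det\mathcal{H}(\mathcal{T})\neq 0$ and $\mathcal{H}(\mathcal{T})$ is invertible.

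\textbf{Main obstacle.} The only delicate point is checking the hypotheses at each contraction. First, $N(u)\cap N(v)=\emptyset$ must hold, which it does because trees are triangle-free. Second, $\mathcal{T}/e$ must remain a tree with at least one edge, since the paper's convention excludes edgeless graphs. Stopping the induction at $K_2$ handles the second point.
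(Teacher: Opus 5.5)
Your proof is correct and is essentially the paper's argument: the paper obtains the corollary directly from the fact that every edge of a tree is a cut-edge, combined with Corollary~\ref{cor-1}, and your induction down to $K_2$ (where $\mathcal{H}(K_2)=(2)$) simply makes that repeated contraction explicit. Your alternative count via Corollary~\ref{cor-a-1} is also sound for trees: with $t=0$ one has $\mathcal{H}(\mathcal{T})=\mathcal{B}\mathcal{B}^{\intercal}$, and ${\rm rank}(\mathcal{B})=n-1=m$.
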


The main result in this subsection reads as follows.

\begin{thm}\label{thm-1}
Let $G$ be a graph with $w(G)$ components. Then $$\eta_{\mathcal{H}}(G)=m(G)-n(G)-t_G(\triangle)+w(G).$$
\end{thm}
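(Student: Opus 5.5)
The plan is to reduce the statement to a rank computation for the edge--vertex incidence matrix, using Corollary~\ref{cor-a-1}. That corollary gives $\eta_{\mathcal{H}}(G)=m(G)-t_G(\triangle)-{\rm rank}(\mathcal{B}(G))$. The claimed formula is therefore equivalent to ${\rm rank}(\mathcal{B}(G))=n(G)-w(G)$. This is the classical fact that the oriented incidence matrix of a graph has rank equal to its order minus its number of components, and I would prove it in two steps.

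The first step reduces to the connected case. Order the vertices and edges component by component. Then $\mathcal{B}(G)$ is block diagonal with blocks $\mathcal{B}(G_1),\ldots,\mathcal{B}(G_{w})$, so its rank is the sum of the ranks of the blocks. It therefore suffices to show ${\rm rank}(\mathcal{B}(H))=n(H)-1$ for every connected graph $H$. An isolated vertex gives an empty block of rank $0=1-1$, so it is consistent with this count.

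The second step handles a connected $H$, where I would bound the rank from both sides.
\begin{itemize}
\item Upper bound: every row of $\mathcal{B}(H)$ has exactly one entry $1$ and one entry $-1$, so $\mathcal{B}(H)\mathbf{1}=\mathbf{0}$ and ${\rm rank}(\mathcal{B}(H))\le n(H)-1$. Equivalently, the kernel of $\mathcal{B}(H)$ is exactly the span of $\mathbf{1}$. Indeed, if $\mathcal{B}(H)\mathbf{f}=\mathbf{0}$, then $f(e^+)=f(e^-)$ on every edge, and connectivity forces $f$ to be constant.
\item Lower bound: take a spanning tree $\mathcal{T}$ of $H$. Its rows form an $(n-1)\times n$ submatrix. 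Repeatedly stripping a pendant vertex and its pendant edge shows these rows are independent; this is the same induction used in Lemma~\ref{lem-3}, where removing a pendant vertex lowers the rank by exactly one. Hence ${\rm rank}(\mathcal{B}(H))\ge n(H)-1$.
\end{itemize}
Substituting ${\rm rank}(\mathcal{B}(G))=n(G)-w(G)$ into Corollary~\ref{cor-a-1} then yields $\eta_{\mathcal{H}}(G)=m(G)-n(G)-t_G(\triangle)+w(G)$.

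The main obstacle is not this rank count but the input from Corollary~\ref{cor-a-1}. That corollary needs ${\rm rank}\begin{pmatrix}\mathcal{B}^{\intercal}\\ \mathcal{C}\end{pmatrix}={\rm rank}(\mathcal{B})+{\rm rank}(\mathcal{C})$ together with ${\rm rank}(\mathcal{C})=t_G(\triangle)$.
\begin{itemize}
\item The additivity of ranks is sound. Since $\curl\,\grad=0$, we have $\mathcal{C}\mathcal{B}=O$, so the row space of $\mathcal{C}$ is orthogonal to the row space of $\mathcal{B}^{\intercal}$.
\item The claim ${\rm rank}(\mathcal{C})=t_G(\triangle)$, i.e. that the triangle rows are independent, can fail. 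For $K_4$ the four triangle rows are dependent, and the formula would give $6-4-4+1=-1$.
\end{itemize}
So the proof goes through exactly when $\mathcal{C}(G)$ has full row rank. In general, the correct statement replaces $t_G(\triangle)$ by ${\rm rank}(\mathcal{C}(G))$. In writing the proof I would state this hypothesis explicitly, or else verify independence of the triangle rows in the intended class of graphs, for instance $K_4$-free graphs.
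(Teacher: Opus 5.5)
Your diagnosis is right, and it settles the matter: the statement is false as written. In $K_n$ any two adjacent edges lie in a common triangle, so Theorem~\ref{H-def} gives $\mathcal{H}(K_4)=4I$ and $\eta_{\mathcal{H}}(K_4)=0$. The formula instead gives $6-4-4+1=-1$.

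The paper's proof does not avoid this. It argues by induction on the number of triangles in a component:
\begin{itemize}
\item the triangle-free base case uses ${\rm rank}(\mathcal{B}(G_i))=n(G_i)-1$, which is fine;
\item the inductive step subdivides an edge $e$ lying in a triangle, producing $\tilde G_i$ with $\triangle_{G_i}(e)$ fewer triangles;
\item it then applies Lemma~\ref{lem-4} to the contraction $\tilde G_i\to G_i$ to get $\eta_{\mathcal{H}}(\tilde G_i)=\eta_{\mathcal{H}}(G_i)+\triangle_{G_i}(e)$.
\end{itemize}
Lemma~\ref{lem-4} is itself derived from Corollary~\ref{cor-a-1}. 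So the induction rests on exactly the identity ${\rm rank}(\mathcal{C})=t_G(\triangle)$ that you flagged. Concretely, for $G_i=K_4$ the subdivided graph has two triangles sharing one edge, and the formula is correct there: $7-5-2+1=1$. Lemma~\ref{lem-4} would then force $\eta_{\mathcal{H}}(K_4)=1-2=-1$.

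Compared with the paper, your route is more direct. Once Corollary~\ref{cor-a-1} is granted, the theorem reduces to ${\rm rank}(\mathcal{B}(G))=n(G)-w(G)$, which you prove for all graphs at once. This makes the induction on triangles superfluous, since its base case already uses this rank fact and that fact does not depend on triangles. Your version also exposes where the argument fails. It yields the correct general statement $\eta_{\mathcal{H}}(G)=m(G)-n(G)+w(G)-{\rm rank}(\mathcal{C}(G))$, equivalently $m-n-t+w+\dim\ker\mathcal{C}^{\intercal}$.

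One correction to your final remark: $K_4$-freeness does not make the triangle rows independent. The octahedron $K_{2,2,2}$ is $K_4$-free, yet its eight triangles, suitably oriented, have zero total boundary because they triangulate a $2$-sphere. Hence ${\rm rank}(\mathcal{C})=7$. The formula gives $12-6-8+1=-1$, whereas Proposition~\ref{H-Kn1nk} gives $\eta_{\mathcal{H}}(K_{2,2,2})=0$.

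The exact condition for the formula is $\ker\mathcal{C}^{\intercal}=\{\mathbf{0}\}$. A usable sufficient condition is that the triangles can be ordered so that each contains an edge lying in no later triangle. For instance, it suffices that every triangle has an edge of triangle degree $1$.
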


\begin{proof}
For $1 \leq i \leq w(G)$, we consider each component $G_i$ of $G$ by induction on $t_{G_i}(\triangle)$.  If $t_{G_i}(\triangle)=0$, then ${\rm rank}(\mathcal{B}(G_i))={\rm rank}(\mathcal{B}(G_i)^{\intercal})={\rm rank}(L(G_i))=n_i-1$, because $G_i$ is connected. Therefore, $\eta_{\mathcal{H}}(G_i)=m(G_i)-n(G_i)+1$. Assume that the statement is true for $t_{G_i}(\triangle) \le t$. Assume that $t_{G_i}(\triangle)=t+1$. Taking an edge $e$ belonging to some triangle, let $\tilde{G}_i$ be the graph obtained from $G_i$ by replacing $e$ with a directed path $e^-\rightarrow u\rightarrow e^+$. Clearly, $G_i=\tilde{G_i}/\{e^-,u\}$ and $N_{\tilde{G_i}}(e^-)\cap N_{\tilde{G_i}}(u)=\emptyset$. Together with Lemma \ref{lem-4}, this yields
\begin{equation}\label{eta1}
\eta_{\mathcal{H}}(\tilde{G}_i)=\eta_{\mathcal{H}}(G_i)+\triangle_{G_i}(e).
\end{equation}
By the induction hypothesis,
\begin{equation}\label{eta2}
\eta_{\mathcal{H}}(\tilde{G}_i)=m(\tilde{G}_i)-n(\tilde{G}_i)-t(\tilde{G}_i)+1.
\end{equation}
By substituting $m(\tilde{G}_i)=m(G_i)+1$, $n(\tilde{G}_i)=n(G_i)+1$ and $t_{\tilde{G}_i}(\triangle)=t_{G_i}(\triangle)-\triangle_{G_i}(e)$ into \eqref{eta2} and employing \eqref{eta1}, we arrive at
\[(m(G_i)+1)-(n(G_i)+1)-(t_{G_i}(\triangle)-\triangle_{G_i}(e))+1=\eta_{\mathcal{H}}(\tilde{G}_i)
=\eta_{\mathcal{H}}(G_i)+\Delta_{G_i}(e),\]
resulting in $\eta_{\mathcal{H}}(G_i)=m(G_i)-n(G_i)-t_{G_i}(\triangle)+1$. Consequently,
$$\eta_{\mathcal{H}}(G)=\sum_{i=1}^{w(G)}\eta_{\mathcal{H}}(G_i)
=\sum_{i=1}^{w(G)}[m(G_i)-n(G_i)-t_{G_i}(\triangle)+1]=m(G)-n(G)-t_G(\triangle)+w(G),$$
which completes the proof.
\end{proof}

We conclude this  subsection with the following observation; a consequence of the previous theorem.

\begin{cor}
Under the assumptions of Theorem \ref{thm-1}, $t_G(\triangle) =  m(G)-n(G)-\eta_{\mathcal{H}}(G)+w(G)$.
\end{cor}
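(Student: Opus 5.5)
This is a direct rearrangement of Theorem~\ref{thm-1}. That theorem gives, for any graph $G$ with $w(G)$ components,
\[
\eta_{\mathcal{H}}(G)=m(G)-n(G)-t_G(\triangle)+w(G).
\]
Since every quantity here is a finite integer, I will move $t_G(\triangle)$ to the left and $\eta_{\mathcal{H}}(G)$ to the right. This yields $t_G(\triangle)=m(G)-n(G)-\eta_{\mathcal{H}}(G)+w(G)$, which is exactly the claimed identity. No further combinatorial or spectral input is needed.

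I will also record the interpretation, since it is the only content beyond algebra. The triangle count $t_G(\triangle)$ is recovered from the order $n(G)$, the size $m(G)$, the number of components $w(G)$, and a single spectral invariant, the $\mathcal{H}$-nullity. So $t_G(\triangle)$ is determined by the $\mathcal{H}$-spectrum together with $n(G)$ and $w(G)$. Note that $m(G)$ is itself spectral, being the size of $\mathcal{H}(G)$.

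The only point to check is that the hypotheses of Theorem~\ref{thm-1} cover the graphs in question. That theorem applies to arbitrary graphs, and orientations are irrelevant because the nullity does not depend on them. I therefore expect no obstacle; the step is purely formal.
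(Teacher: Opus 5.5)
Your rearrangement is exactly how the paper obtains this corollary: it is presented as an immediate consequence of Theorem~\ref{thm-1}, and the algebra is fine. The gap is precisely the point you identified as the only thing to check. Theorem~\ref{thm-1} does not hold for arbitrary graphs, and so neither does this corollary.

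Take $G=K_4$. Here $n=4$, $m=6$, $t_G(\triangle)=4$, $w=1$. Every edge lies in two triangles and any two adjacent edges lie in a common triangle, so $\mathcal{H}(K_4)=4I_6$ and $\eta_{\mathcal{H}}(K_4)=0$. The right-hand side of the corollary is then $6-4-0+1=3\neq 4$, and Theorem~\ref{thm-1} would predict nullity $-1$. The octahedron $K_{2,2,2}$ fails in the same way: $m-n-t+w=12-6-8+1=-1$, whereas Proposition~\ref{H-Kn1nk} gives smallest $\mathcal{H}$-eigenvalue $2$, hence nullity $0$.

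The faulty input is the assertion just before Corollary~\ref{cor-a-1} that ${\rm rank}\,\mathcal{C}(G)=t_G(\triangle)$. What is true is that $\mathcal{C}\mathcal{B}=O$. Hence the row spaces of $\mathcal{B}^{\intercal}$ and $\mathcal{C}$ are orthogonal, and the stacked matrix in Lemma~\ref{lem-a-1} has rank ${\rm rank}\,\mathcal{B}+{\rm rank}\,\mathcal{C}=(n-w)+(t-\dim\ker\mathcal{C}^{\intercal})$. But $\ker\mathcal{C}^{\intercal}$, the space of real $2$-cycles carried by triangles, need not vanish. For the four faces of $K_4$ oriented as the boundary of a tetrahedron, the sum of the corresponding rows of $\mathcal{C}$ is zero. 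Lemma~\ref{lem-4}, and with it the induction proving Theorem~\ref{thm-1}, rests on the same claim.

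The correct identity is
\[
t_G(\triangle)=m(G)-n(G)-\eta_{\mathcal{H}}(G)+w(G)+\dim\ker\mathcal{C}(G)^{\intercal}.
\]
So the stated corollary holds exactly when $\mathcal{C}(G)$ has full row rank, for example when every triangle has an edge lying in no other triangle.

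Your interpretive remark does survive, but for a different reason. Since $\operatorname{tr}\mathcal{H}(G)=\sum_e(\triangle(e)+2)=2m(G)+3t_G(\triangle)$, the $\mathcal{H}$-spectrum alone determines $t_G(\triangle)$.
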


The number of triangles in a graph or a network is a significant metric invariant recognized in an extensive  range of  applications, see \cite[eg.]{has-dav,pan-etal,rez-etal,zhang-etal}. Accordingly, computing the number of triangles (in terms of other graph invariants) is a problem posed in the GraphChallenge competition~\cite{kep}, and the previous proposition can be seen as a contribution in this domain.

Regarding Corollary \ref{T-ni}, we  pose the following problem to finish this subsection.

\begin{problem}
For a tree $T$, determine the  inverse matrix $\mathcal{H}^{-1}(T)$.
\end{problem}

\subsection{$\mathcal{H}$-polynomial}\label{H45}

We denote the characteristic polynomial of $\mathcal{H}(G)$ by $$\phi_{\mathcal{H}}(G,\lambda) = \det(\lambda I_m - \mathcal{H}(G))=\lambda^m + c_1\lambda^{m-1}+\cdots+c_{m-1}\lambda+c_m,$$ and abbreviate it to the \textit{$\mathcal{H}$-polynomial} of a graph $G$. In this section our focus is on the combinatorial interpretations of the coefficients $c_i$.

Note that we used the matrix \eqref{D+A} to show Lemma \ref{lem-f-1}. In fact, it contains more profound graph-theoretic information and has been employed to investigate the irreducibility of the Helmholtzian matrix \cite{lu-shi-sta-ww}. At this event we need to introduce the signed graphs. A {\it signed graph} is an ordered pair $(G,\sigma)$, where $G$ is an ordinary graph, called the underlying graph, and $\sigma\colon E(G)\longrightarrow\{+1,-1\}$ is the signature on the edge set $E(G)$. The adjacency matrix  $A(\dot{G})$  of $\dot{G}$ is obtained from the adjacency matrix $A(G)$ by reversing the sign of all 1s which correspond to negative edges. For a graph $G$ with an
orientation on $E(G)$, let $\Lambda(G)$ denotes the following signed graph with loops:
\begin{enumerate}
\item[(a)] the vertex set is $V=E(G)$;
\item[(b)] there are $\triangle(e)+2$ loops with sign $1$ attached at every vertex $e$;
\item[(c)] the set of edges with sign $1$ is $E^+=\{\{e,e'\}\mid e\overset{\pm}{\sim} e'
    \text{ and }e\not\vartriangle e'\}$;
\item[(d)] the set of edges with sign $-1$ is $E^-=\{\{e,e'\}\mid e\leftrightarrow
    e'\text{ and }e\not\vartriangle e'\}$.
\end{enumerate}

The previous signed graph can be viewed as a combination of the Gallai graph and the standard signed line graph \cite{le}. Apparently, there are several definitions of a signed line graph, and a short discussion on them can be found in \cite{BSZ}. The adjacency matrix $A(\Lambda(G)) = (a_{ij}) $  is given by
$$
a_{ij}=
\begin{cases}
	\triangle(e)+2, & \mbox{if $i=j$}  \\
	1, & \mbox{if $ij \in E(\Lambda(G))$ with sign 1}  \\
	-1, & \mbox{if $ij \in E(\Lambda(G))$ with sign $-1$}\\
	0, & \text{otherwise}.
\end{cases}
$$
By deleting the loops from $\Lambda(G)$, we arrive at its reduction, denoted by $\Lambda_R(G)$. Here is an example in Fig. \ref{Ge9}.
\begin{figure}[]
	\begin{minipage}{.3\textwidth}
		%\flushright
		\centering
		\begin{tikzpicture}[scale = 1.1]
			\node[draw,circle,inner sep=0.5mm,fill] (v1) at (0,1) {};
			\node[draw,circle,inner sep=0.5mm,fill] (v2) at (-1,0) {};
			\node[draw,circle,inner sep=0.5mm,fill] (v3) at (1,0) {};
			\node[draw,circle,inner sep=0.5mm,fill] (v4) at (0,-1) {};
			\node[draw,circle,inner sep=0.5mm,fill] (v5) at (-1.7,-1) {};
			\node[draw,circle,inner sep=0.5mm,fill] (v6) at (1.7,-1) {};
			\node[draw,circle,inner sep=0.5mm,fill] (v7) at (-0.7,-2.4) {};
			\node[draw,circle,inner sep=0.5mm,fill] (v8) at (0.7,-2.4) {};
			
			\draw [->,thick] (v2) edge node[pos=.4, right]{$e_{1}$}(v1);
			\draw [->,thick] (v1) edge node[pos=.6, left]{$e_{2}$}(v3);
			\draw [->,thick] (v4) edge node[pos=.4, left]{$e_{3}$} (v2);
			\draw [->,thick] (v4) edge node[pos=.4, right]{$e_{4}$} (v3);
			\draw [->,thick] (v4) edge node[pos=.6, below]{$e_{5}$} (v5);
			\draw [->,thick] (v4) edge node[pos=.6, below]{$e_{6}$} (v6);
			\draw [->,thick] (v4) edge node[pos=.75, left]{$e_{7}$} (v7);
			\draw [->,thick] (v4) edge node[pos=.75, right]{$e_{9}$} (v8);
			\draw [->,thick] (v7) edge node[pos=.5, above]{$e_{8}$}(v8);
			
			\node(98) at (0,1.6) {};
			\node(99) at (0,-2.8) {$G$};
		\end{tikzpicture}
	\end{minipage}%
	\hfill
	\begin{minipage}{.4\textwidth}
		%\flushleft
		\centering
		\begin{tikzpicture}[scale=0.8]
			
			\node[draw,circle,inner sep=0.5mm,fill] (v1) at (-0.6480,-1.8794) {};
			\draw (v1) node [above=1pt]{$e_1$};
			\node[draw,circle,inner sep=0.5mm,fill] (v2) at (0.6480,-1.8794) {};
			\draw (v2) node [above=1pt]{$e_2$};
			\node[draw,circle,inner sep=0.5mm,fill] (v3) at (-1.7320,-1) {};
			\draw (v3) node [above left=1pt]{$e_3$};
			\node[draw,circle,inner sep=0.5mm,fill] (v4) at (1.7320,-1) {};
			\draw (v4) node [above right=1pt]{$e_4$};
			\node[draw,circle,inner sep=0.5mm,fill] (v5) at (-1.9696,0.3472) {};
			\draw (v5) node [below left=0.1mm, xshift=1mm]{$e_5$};
			\node[draw,circle,inner sep=0.5mm,fill] (v6) at (1.9696,0.3472) {};
			\draw (v6) node [below right=0.1mm, xshift=-1mm]{$e_6$};
			\node[draw,circle,inner sep=0.5mm,fill] (v7) at (-1.2856,1.5320) {};
			\draw (v7) node [right=0.08cm, yshift=0.03cm]{$e_7$};
			\node[draw,circle,inner sep=0.5mm,fill] (v8) at (1.2856,1.5320) {};
			\draw (v8) node [left=0.08cm, yshift=0.03cm]{$e_9$};
			\node[draw,circle,inner sep=0.5mm,fill] (v9) at (0,1.81) {};
			\draw (v9) node [below=1pt]{$e_8$};
			
			\draw [thick,dashed] (v1) edge (v2);
			\draw [thick] (v2) edge (v4);
			\draw [thick] (v4) edge (v3);
			\draw [thick,dashed] (v3) edge (v1);
			\draw [thick] (v3) edge (v5);
			\draw [thick] (v5) edge (v6);
			\draw [thick] (v6) edge (v4);
			\draw [thick] (v7) edge (v5);
			\draw [thick] (v8) edge (v6);
			\draw [thick] (v4) edge (v7);
			\draw [thick] (v8) edge (v3);
			\draw [thick] (v3) edge (v7);
			\draw [thick] (v4) edge (v8);
			\draw [thick] (v5) edge (v4);
			\draw [thick] (v3) edge (v6);
			\draw [thick] (v7) edge (v6);
			\draw [thick] (v5) edge (v8);
			
			\draw  (1.37,1.65) ellipse (0.1 and 0.1);
			\draw  (1.41,1.73) ellipse (0.2 and 0.2);
			\draw  (1.44,1.81) ellipse (0.3 and 0.3);
			
			\draw  (-1.37,1.65) ellipse (0.1 and 0.1);
			\draw  (-1.41,1.73) ellipse (0.2 and 0.2);
			\draw  (-1.44,1.81) ellipse (0.3 and 0.3);
			
			\draw  (0,1.92) ellipse (0.1 and 0.1);
			\draw  (0,2) ellipse (0.2 and 0.2);
			\draw  (0,2.1) ellipse (0.3 and 0.3);
			
			\draw  (2.18,0.46) ellipse (0.2 and 0.2);
			\draw  (2.24,0.52) ellipse (0.3 and 0.3);
			
			\draw  (-2.18,0.46) ellipse (0.2 and 0.2);
			\draw  (-2.24,0.52) ellipse (0.3 and 0.3);
			
			\draw  (1.91,-1.13) ellipse (0.2 and 0.2);
			\draw  (1.96,-1.20) ellipse (0.3 and 0.3);
			
			\draw  (-1.91,-1.13) ellipse (0.2 and 0.2);
			\draw  (-1.96,-1.20) ellipse (0.3 and 0.3);
			
			\draw  (0.72,-2.10) ellipse (0.2 and 0.2);
			\draw  (0.75,-2.18) ellipse (0.3 and 0.3);
			
			\draw  (-0.72,-2.10) ellipse (0.2 and 0.2);
			\draw  (-0.75,-2.18) ellipse (0.3 and 0.3);

			\node(99) at (0,-3.3) {$\Lambda(G)$};
		\end{tikzpicture}
	\end{minipage}%
	\hfill
	\begin{minipage}{.3\textwidth}
		%\flushleft
		\centering
		\begin{tikzpicture}[scale=0.82]
			
			\node[draw,circle,inner sep=0.5mm,fill] (v1) at (-0.6480,-1.8794) {};
			\draw (v1) node [above=1pt]{$e_1$};
			\node[draw,circle,inner sep=0.5mm,fill] (v2) at (0.6480,-1.8794) {};
			\draw (v2) node [above=1pt]{$e_2$};
			\node[draw,circle,inner sep=0.5mm,fill] (v3) at (-1.7320,-1) {};
			\draw (v3) node [below left=1pt]{$e_3$};
			\node[draw,circle,inner sep=0.5mm,fill] (v4) at (1.7320,-1) {};
			\draw (v4) node [below right=1pt]{$e_4$};
			\node[draw,circle,inner sep=0.5mm,fill] (v5) at (-1.9696,0.3472) {};
			\draw (v5) node [above left=0.1pt]{$e_5$};
			\node[draw,circle,inner sep=0.5mm,fill] (v6) at (1.9696,0.3472) {};
			\draw (v6) node [above right=0.1pt]{$e_6$};
			\node[draw,circle,inner sep=0.5mm,fill] (v7) at (-1.2856,1.5320) {};
			\draw (v7) node [above=0.1pt, yshift=0.08cm]{$e_7$};
			\node[draw,circle,inner sep=0.5mm,fill] (v8) at (1.2856,1.5320) {};
			\draw (v8) node [above=0.1pt, yshift=0.08cm]{$e_9$};
			\node[draw,circle,inner sep=0.5mm,fill] (v9) at (0,1.78) {};
			\draw (v9) node [above=1pt]{$e_8$};
			
			\draw [thick,dashed] (v1) edge (v2);
			\draw [thick] (v2) edge (v4);
			\draw [thick] (v4) edge (v3);
			\draw [thick,dashed] (v3) edge (v1);
			\draw [thick] (v3) edge (v5);
			\draw [thick] (v5) edge (v6);
			\draw [thick] (v6) edge (v4);
			\draw [thick] (v7) edge (v5);
			\draw [thick] (v8) edge (v6);
			\draw [thick] (v4) edge (v7);
			\draw [thick] (v8) edge (v3);
			\draw [thick] (v3) edge (v7);
			\draw [thick] (v4) edge (v8);
			\draw [thick] (v5) edge (v4);
			\draw [thick] (v3) edge (v6);
			\draw [thick] (v7) edge (v6);
			\draw [thick] (v5) edge (v8);
			
			\node(98) at (0,2.7) {};
			\node(99) at (0,-2.7) {$\Lambda_R(G)$};
		\end{tikzpicture}
	\end{minipage}
	\caption{The graph $G$,  $\Lambda(G)$ and $\Lambda_R(G)$.}
	\label{Ge9}
\end{figure}

In the light of Theorem~\ref{H-def}, we have
\begin{equation*}
\label{eq:rel}\mathcal{H}(G)=A(\Lambda(G))=A(\Lambda_R(G))+\mathcal{D}(G),
\end{equation*}
where $\mathcal{D}(G)=\diag(\triangle(e)+2\mid e\in E(G))$. Then, $\phi_{\mathcal{H}}(G,\lambda)$ is just the characteristic polynomial of $A(\Lambda(G))$.

In  $\Lambda(G)$, the {\it weight} $w(v)$ of a vertex $v$ is the number of loops attached at $v$, the weight $w^+(e)$ (or $w^-(e)$) of an edge $e$ is the positive (or negative) sign of $e$, and the weight $w(C)$ of a cycle $C$ is the product of  weights of its edges. We also write $c^o_+$, $c^o_-$, $c^e_+$ and $c^e_-$ for the number of positive odd cycles, negative odd cycles, positive even  cycles and negative even cycles in $\Lambda(G)$, respectively.

We need more notation. A subgraph is called an {\it elementary graph} if it is an isolated vertex $K_1$, or an isolated edge $K_2$, or a cycle $C_n$ ($n \geq 3$). The subgraph $B$ of $\Lambda(G)$ containing $k$ vertices is a {\it basic subgraph} if all  its components are elementary graphs. Let $n(B)$, $m(B)$ and $c(B)$ be the number of isolated vertices, the number of isolated edges and the number of cycles in $B$, respectively. Let $p(B)$ be the number of non-trivial components, i.e., those distinct from $K_1$. Therefore, $$p(B)=m(B)+c(B) \quad \text{and} \quad c(B)=c^o_++c^o_-+c^e_++c^e_-.$$
In a basic subgraph $B$, let $\mathfrak{B}_v$, $\mathfrak{B}_e$ and $\mathfrak{B}_c$ be the sets of isolated vertices, isolated edges and cycles, respectively. The {\it vertex-weight}, {\it edge-weight} and {\it cycle-weight} of $B$ are  $$\varpi_v(B) = \prod\limits_{v \in \mathfrak{B}_v}w(v),\;\, \varpi_e(B) = (-1)^{m(B)} \quad \mbox{and} \quad \varpi_c(B) = \prod\limits_{C \in \mathfrak{B}_c}(-1)^{c^o_-(B)+c^e_+(B)}2^{c(B)}.$$ Set $\varpi_v(B) = \varpi_c(B) = 1$  if $\mathfrak{B}_v = \mathfrak{B}_c = \emptyset$. Then the resulting {\it total weight} of $B$ is defined as
$$\varpi(B) = \varpi_v(B)\varpi_e(B)\varpi_c(B) = (-1)^{m(B)+c^o_-(B)+c^e_+(B)}2^{c(B)}\varpi_v(B).$$

\begin{thm}\label{thm-2}
With the notation above, it holds	\[c_k=(-1)^k\sum_{B\in\mathcal{B}_k}\varpi(B)=(-1)^k\sum_{B\in\mathcal{B}_k}(-1)^{m(B)+c^o_-(B)+c^e_+(B)}2^{c(B)}\varpi_v(B),\]
	where $\mathcal{B}_k$ is the set of all basic subgraphs of $\Lambda(G)$ containing $k$ vertices.
\end{thm}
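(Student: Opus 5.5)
This is a Sachs-type coefficient theorem, and I would prove it by expanding principal minors. Since $\phi_{\mathcal{H}}(G,\lambda)=\det(\lambda I_m-A(\Lambda(G)))$, the standard expansion gives $c_k=(-1)^k\sum_{S}\det A(\Lambda(G))[S]$. Here $S$ ranges over all $k$-subsets of $E(G)$, i.e.\ $k$-vertex subsets of $\Lambda(G)$, and $A[S]$ denotes the principal submatrix. It therefore suffices to show that for each $S$,
\[\det A(\Lambda(G))[S]=\sum_{B}\varpi(B),\]
where $B$ runs over the basic subgraphs of $\Lambda(G)$ with vertex set exactly $S$. Summing over $S$ then gives the formula, since $\mathcal{B}_k$ is the disjoint union over $S$ of these families.

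To evaluate $\det A[S]$, I would use the Leibniz formula $\sum_{\pi}\sgn(\pi)\prod_{i\in S}a_{i\pi(i)}$ and read off the cycle decomposition of $\pi$. A nonzero term needs $a_{i\pi(i)}\neq 0$ for every $i$. Fixed points of $\pi$ contribute the diagonal entry $\triangle(e)+2=w(v)$, i.e.\ isolated vertices of $B$. Transpositions $(i\,j)$ need $ij\in E(\Lambda_R(G))$ and contribute $-a_{ij}^2=-1$, which gives $\varpi_e(B)=(-1)^{m(B)}$. Cycles of $\pi$ of length $\ell\ge 3$ correspond to cycles of $\Lambda_R(G)$. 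This is the point where $\Lambda(G)$ is loop-only on the diagonal and has no multiple edges, so off-diagonal entries lie in $\{0,\pm1\}$. Each such cycle contributes sign $(-1)^{\ell-1}$ times the product of its edge signs. Each undirected cycle $C$ arises from exactly two cyclic permutations, its two directions, which carry the same value. Grouping permutations by their underlying basic subgraph therefore contributes a factor $2$ per cycle, giving $2^{c(B)}$.

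The sign bookkeeping is the main place where care is needed. For a cycle $C$ of length $\ell$ with weight $w(C)=\pm1$, the contribution is $(-1)^{\ell-1}w(C)$. This equals $-1$ exactly when $C$ is odd and negative, or even and positive. It equals $+1$ when $C$ is odd and positive, or even and negative. Hence the product over cycles is $(-1)^{c^o_-(B)+c^e_+(B)}$, where $c^o_-(B)$ and $c^e_+(B)$ count the relevant cycles inside $B$. Multiplying the factors for vertices, edges and cycles gives exactly $\varpi(B)=(-1)^{m(B)+c^o_-(B)+c^e_+(B)}2^{c(B)}\varpi_v(B)$. I would also check the conventions: the empty product is $1$, and a $2$-cycle of $\pi$ is treated as an isolated edge (contributing $-1$), not as a cycle. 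The definition of elementary graphs requires $C_n$ with $n\ge 3$, so there is no double counting.

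I expect no deep obstacle; the proof is essentially Sachs' theorem adapted to a signed graph with loops. The step to write most carefully is the bijection between permutations with nonzero product and pairs consisting of a basic subgraph together with an orientation of each of its cycles. This bijection must use the fact that loops are encoded as diagonal weights $w(v)$ rather than as edges of basic subgraphs, so that the diagonal enters only through $\varpi_v(B)$.
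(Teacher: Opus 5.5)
Your proposal is correct and follows essentially the same route as the paper: write $c_k$ as $(-1)^k$ times the sum of the $k\times k$ principal minors of $A(\Lambda(G))$, expand each minor by the Leibniz formula, and match the nonzero terms with basic subgraphs, picking up a factor $2^{c(B)}$ from the two orientations of each cycle. Your sign bookkeeping via $(-1)^{\ell-1}w(C)$ per cycle and $-1$ per transposition is, if anything, tidier than the paper's, whose intermediate display carries a stray factor $(-1)^{m(B)}$ before the sign of the permutation is applied.
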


\begin{proof}
	Since $\phi_{\mathcal{H}}(G,\lambda)=\det(\lambda I_m -A(\Lambda(G)))$, we have that $(-1)^kc_k$ is just the sum of all $k\times k$ principal minors of $A(\Lambda(G))$. Take a $k\times k$ principal submatrix $M$ from $A(\Lambda(G))$, and assume that the rows and columns of $M$ are indexed by $\{v_1,v_2,\ldots,v_k\}$. Hence, we have
	\[\det (M)=\sum_{\tau}{\sgn}(\tau)M_{1,\tau(1)}M_{2,\tau(2)}\cdots M_{k,\tau(k)},\]
	where $\tau$ is a permutation of $\{1,2,\ldots,k\}$ and ${\sgn}(\tau)$ is the sign of $\tau$. Clearly, $\tau$ is a product of some disjoint cycles. Thus, for each transposition $(i,j)$ in $\tau$ the contribution of $M_{i,j}M_{ji}$ in the term $M_{1,\tau(1)}M_{2,\tau(2)}\cdots M_{k,\tau(k)}$ it is $1$ if $v_i\sim v_j$, and $0$ otherwise. Each cycle $(i_1,i_2,\ldots,i_l)$ contributes $M_{i_1,i_2}M_{i_2,i_3}\cdots M_{i_{l-1},i_l}M_{i_l,i_1}$ in the term $M_{1,\tau(1)}M_{2,\tau(2)}\cdots M_{k,\tau(k)}$ is $w(C)$ if $v_{i_1},v_{i_2},\ldots,v_{i_l}$ forms a cycle, and $0$ otherwise. Each fixed point $i$ contributes $M_{i,i}$, that is  $w(v_i)$. Thus, a term $M_{1,\tau(1)}M_{2,\tau(2)}\cdots M_{k,\tau(k)}$ is non-zero only if there is a basic subgraph, say $B$, corresponding to $\tau$. In this case, this term is exactly
	\[(-1)^{m(B)}\big(\prod_{C}w(C)\big)\big(\prod_{v_x}w(v_x)\big)=(-1)^{c^o_-+c^e_-}\varpi_v(B),\]
	where $C$ ranges over all cycles in $B$ and $v_x$ ranges over all isolated vertices in $B$. Note that a transposition contributes $-1$ in ${\sgn}(\tau)$, an odd cycle contributes $1$ in ${\sgn}(\tau)$ and an even cycle contributes $-1$ in ${\sgn}(\tau)$. Thereby, ${\sgn}(\tau)=(-1)^{m(B)}(-1)^{c^e_++c^e_-}=(-1)^{m(B)+c^e_++c^e_-}$, and thus
	\[{\sgn}(\tau)M_{1,\tau(1)}M_{2,\tau(2)}\cdots M_{k,\tau(k)}=(-1)^{m(B)+c^o_-(B)+c^e_+(B)}\varpi_v(B).\]
	On the other hand, each cycle in $B$ gives rise to two  permutations obtained by reversing this cycle, which do not change the value of the term ${\sgn}(\tau)M_{1,\tau(1)}M_{2,\tau(2)}\cdots M_{k,\tau(k)}$. Thus, in total, each $B$ provides $2^{c(B)}$  permutations which share the same value. Hence, we have
	\[\det(M)=\sum_{B}(-1)^{m(B)+c^o_-(B)+c^e_+(B)}2^{c(B)}\varpi_v(B),\]
	where $B$ ranges over all basic subgraphs with vertex set $\{v_1,v_2,\ldots,v_k\}$. Hence, by considering all $k\times k$ principal minors of $A(\Lambda(G))$, we obtain
	\[c_k=(-1)^k\sum_{B\in\mathcal{B}_k}(-1)^{m(B)+c^o_-(B)+c^e_+(B)}2^{c(B)}\varpi_v(B).\]
	This finishes the proof.
\end{proof}

\begin{re}\label{re-2}
	Observe that there are at least two loops attached at each vertex of $\Lambda(G)$, which significantly enlarges the number of
	basic subgraphs on $k$ vertices, because every set of $k$ isolated vertices forms a basic
	subgraph. If we delete two loops at every vertex, then the number of basic subgraphs falls down, and so let $\Lambda(G)'$ be obtained from $\Lambda(G)$ in this way. It is clear that $\lambda$ is an eigenvalue of $A(\Lambda(G))$ if and only if
	$\lambda-2$ is an eigenvalue of $A(\Lambda(G)')$. Consequently, it would be convenient to compute the
	eigenvalues $A(\Lambda(G))$ by computing the eigenvalues of $A(\Lambda(G)')$.
\end{re}

We proceed to compute the first three coefficients of the $\mathcal{H}$-polynomial. For a vertex $u$ of a graph $G$, the {\it triangle degree} of $u$, denoted by $\triangle_G(u)$, is the number of triangles containing  $u$.  The {\it neighbourhood} of an edge $e \in E(G)$, denoted by $N_G(e)$, is a set of edges adjacent to $e$.

\begin{cor}
	The first three coefficients of the $\mathcal{H}$-polynomial of a graph $G$:
	\begin{itemize}
		\item[{(i)}]
		$c_1=-\sum\limits_{e\in E(G)}\triangle_G(e)-2m(G)$;\\
		\item[{(ii)}]
		$c_2=\sum\limits_{e_i,e_j \in E(G)}(\triangle_G(e_i)+2)(\triangle_G(e_j)+2)-\sum\limits_{v \in V(G)}{d_G(v) \choose 2} + 3t_G(\triangle)$;\\
		\item[{(iii)}] $c_3 = c_{31}+c_{32}+c_{33}$, where
		$$
		\begin{aligned}
			c_{31}&=-\sum\limits_{e_u,e_v,e_z \in E(G)}(\triangle_G(e_u)+2)(\triangle_G(e_v)+2)(\triangle_G(e_z)+2);\\
			c_{32}&=\sum\limits_{e\in E(G)}\left(\sum\limits_{u \in V(G)}\binom{d_G(u)}{2}-3t_G(\triangle)-N_G(e)+2\triangle_G(e)\right)\left(\triangle_G(e)+2\right)\\
			c_{33}&=-2 \sum_{u \in V(G)}\left(\binom{d_G(u)}{3}-\triangle_G(u)(d_{G}(u)-2)\right).
		\end{aligned}
		$$
	\end{itemize}
\end{cor}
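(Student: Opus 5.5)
The plan is to read off $c_1,c_2,c_3$ from Theorem~\ref{thm-2}: by that theorem, $(-1)^kc_k$ is the sum of all $k\times k$ principal minors of $A(\Lambda(G))=\mathcal{H}(G)$. So for $k=1,2,3$ I would list the possible basic subgraphs on $k$ vertices of $\Lambda(G)$ and count them in terms of $G$. Throughout I would take the reading that the sums over $e_i,e_j$ (resp.\ $e_u,e_v,e_z$) run over unordered sets of distinct edges.

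\textbf{$k=1$.} The only basic subgraphs are single vertices $e$ with weight $\triangle(e)+2$, so $c_1=-\sum_e(\triangle(e)+2)$, which is (i).

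\textbf{$k=2$.} Two types occur. First, two isolated vertices, contributing $\sum_{\{e_i,e_j\}}(\triangle(e_i)+2)(\triangle(e_j)+2)$. Second, an edge of $\Lambda_R(G)$, contributing $(-1)^1$ times the squared weight, i.e.\ $-1$ each. So I need $|E(\Lambda_R(G))|$. Pairs of edges of $G$ sharing a vertex number $\sum_v\binom{d(v)}{2}$. Those lying in a common triangle are exactly not adjacent in $\Lambda_R$, and each triangle accounts for $3$ such pairs. Hence $|E(\Lambda_R)|=\sum_v\binom{d(v)}2-3t$, which gives (ii).

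\textbf{$k=3$.} There are three types.
\begin{itemize}
\item[(a)] Three isolated vertices, giving $c_{31}$ after the overall sign $(-1)^3$.
\item[(b)] An isolated vertex $e$ together with an edge $\{f,g\}$ of $\Lambda_R$ disjoint from $e$. This contributes $+(\triangle(e)+2)$ per such configuration after signs. For fixed $e$, the number of $\Lambda_R$-edges avoiding $e$ is $|E(\Lambda_R)|$ minus the $\Lambda_R$-degree of $e$. That degree is $|N_G(e)|-2\triangle(e)$, since each triangle through $e$ kills exactly two neighbours. This yields $c_{32}$.
\item[(c)] Triangles of $\Lambda_R(G)$, each contributing $2\cdot(\pm1)$ according to its sign.
\end{itemize}

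For (c), three mutually adjacent edges of $G$ not pairwise in common triangles must form a star $K_{1,3}$ at some vertex $u$; a triangle of $G$ would have all pairs in a common triangle. Among the $\binom{d(u)}{3}$ triples at $u$, a pair $f,g$ is killed exactly when the other ends of $f$ and $g$ are adjacent. So the triples to exclude are those containing a pair lying in a triangle through $u$, and each triangle at $u$ lies in $d(u)-2$ triples.

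The obstacle here is twofold. The first issue is overcounting: a triple at $u$ may contain two killed pairs. I expect the stated formula implicitly treats this via $\triangle(u)(d(u)-2)$, and I would check whether inclusion–exclusion corrections vanish or are absorbed; if not, the formula needs an extra term. The second issue is the sign. With all edges at $u$ oriented outward, every pair is $\overset{-}{\sim}$, so all three $\Lambda_R$-edges are positive and the cycle is a positive odd cycle. Its weight is then $(-1)^{0}2=2$, and after the $(-1)^3$ the contribution is $-2$ per star, matching $c_{33}$. Orientation-independence of the spectrum justifies this orientation choice locally for each $u$, since coefficients are orientation invariant. Summing (a), (b) and (c) gives (iii).
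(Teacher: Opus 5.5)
Your route is the paper's: you read $c_1,c_2,c_3$ off Theorem~\ref{thm-2} by listing the basic subgraphs of $\Lambda(G)$ on at most three vertices, and (i), (ii), $c_{31}$ and $c_{32}$ go through as you describe. The gap is the step you flagged in (c), and it cannot be closed, because the inclusion--exclusion corrections do not vanish. Fix $u$, let $T$ range over the $\binom{d(u)}{3}$ triples of edges at $u$, and let $k(T)\in\{0,1,2,3\}$ be the number of pairs $\{uv,uw\}\subseteq T$ with $vw\in E(G)$. Then $\triangle(u)(d(u)-2)=\sum_T k(T)$, whereas the number of $\Lambda_R$-triangles centred at $u$ is the number of $T$ with $k(T)=0$. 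That number is
\[\binom{d(u)}{3}-\sum_T k(T)+\sum_T\binom{k(T)}{2}-\sum_T\binom{k(T)}{3}=\binom{d(u)}{3}-\triangle(u)(d(u)-2)+\sum_{e\ni u}\binom{\triangle(e)}{2}-k_4(u),\]
where $k_4(u)$ is the number of $K_4$'s containing $u$. Two killed pairs in one triple share an edge $uv$ and have their other ends in $N(u)\cap N(v)$, a set of size $\triangle(uv)$. Three killed pairs form a triangle in $G[N(u)]$. The two extra terms have nonnegative sum, and that sum is zero for every $u$ if and only if no edge of $G$ lies in two triangles.

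So (iii) is false in general. For $K_4$, $\mathcal{H}(K_4)=4I_6$, so $c_3=-\binom{6}{3}4^3=-1280$, and $\Lambda_R(K_4)$ has no edges. Yet the stated $c_{33}$ is $-2\cdot 4\,(1-3)=16$, and the stated formula gives $-1264$. For the diamond $K_4-e$, $\phi_{\mathcal{H}}=(\lambda-4)^3(\lambda-2)^2$, so $c_3=-304$, while $c_{31}+c_{32}+c_{33}=-324+20+4=-300$. The per-vertex term at each degree-$3$ vertex is $\binom33-2=-1$, a negative \emph{count}. Your argument does prove (iii) under the extra hypothesis $\triangle(e)\le 1$ for all $e$; this covers the paper's worked example, which has a single triangle. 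In general it proves (iii) with $c_{33}$ replaced by $-2\big[\sum_u\big(\binom{d(u)}{3}-\triangle(u)(d(u)-2)\big)+2\sum_e\binom{\triangle(e)}{2}-4k_4(G)\big]$. The paper's own proof makes exactly this overcount, taking $\binom{\triangle(u)}{1}\binom{d(u)-2}{1}$ as the number of excluded triples.

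A smaller point concerns the sign. Invariance of the spectrum does not let you choose a different orientation at each vertex, since no single orientation makes every star outward. You do not need it, though. For edges $e,f$ through $u$ not in a common triangle, $h_{ef}=\mathcal{B}_{e,u}\mathcal{B}_{f,u}$. Hence every $\Lambda_R$-triangle at $u$ has sign $\mathcal{B}_{e,u}^2\mathcal{B}_{f,u}^2\mathcal{B}_{g,u}^2=1$ under any orientation.
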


\begin{proof}
Let $\Line(G)$ and $\Lambda_R(G)$ denote the line graph of $G$ and the reduction of $\Lambda(G)$ defined above. Obviously, a basic subgraph of order $1$ is a vertex of $\Lambda(G)$. In terms of Theorem \ref{thm-2}, $$c_1=-\sum\limits_{K_1 \in \mathcal{B}_1}\varpi(K_1) = -\sum\limits_{e\in E(G)}\triangle_G(e)-2m(G).$$
	
	For $c_2$,  basic subgraphs of $\Lambda(G)$ in $\mathcal{B}_2$ consist of two kinds of  elementary subgraphs. One of them is  $B\cong 2K_1$, and its  contribution to $c_2$ is
	$$\sum\limits_{B \in \mathcal{B}_2 }\varpi(2K_1) = \sum\limits_{u,v\in \Lambda(G)}\varpi(u)\varpi(v)=\sum\limits_{e_u,e_v \in E(G)}(\triangle_G(e_u)+2)(\triangle_G(e_v)+2).$$ The other one is an isolated edge $B\cong K_2$ whose contribution to $c_2$ is  $$\sum\limits_{B \in \mathcal{B}_2 }\varpi(K_2) = -m(\Lambda_R(G)) = -m(\Line(G))+3t_G(\triangle) = -\sum\limits_{v \in V(G)}{d_G(v) \choose 2} + 3t_G(\triangle).$$  Hence,
	$$c_2 = \sum\limits_{e_u,e_v \in E(G)}(\triangle_G(e_u)+2)(\triangle_G(e_v)+2)-\sum\limits_{v \in V(G)}{d_G(v) \choose 2} + 3t_G(\triangle).$$
	
	We next compute $c_3$. We orient the edges in such a way that for the any triangle $K_3 \subseteq \Lambda(G)$, we have $w(K_3) =1$. The basic subgraphs of $\Lambda(G)$ in $\mathcal{B}_3$ are composed of three kinds of elementary subgraphs. The first is $3K_1$ whose contribution to $c_3$ is
	\begin{align*}
		\sum\limits_{3K_1 \in \mathcal{B}_3}\varpi(3K_1) &=\sum\limits_{u,v,z\in \Lambda(G)}\varpi(u)\varpi(v)\varpi(z)\\
		& = \sum\limits_{e_u,e_v,e_z \in E(G)}(\triangle_G(e_u)+2)(\triangle_G(e_v)+2)(\triangle_G(e_z)+2).
	\end{align*}
	The second one is $K_1 \cup K_2$ whose contribution to $c_3$ is
	$$
	\begin{aligned}
		\sum\limits_{K_1\cup K_2\in\mathcal{B}_2} \varpi(K_{1} \cup K_{2})
		&=-\sum\limits_{\substack{e \in \Lambda(G)\\ u \notin e}}\varpi(u)\\
		&=-\sum\limits_{e\in E(G)}\left(m(\Line(G))-3t_G(\triangle)-(N_G(e)-2\triangle_G(e))\right)(\triangle_G(e)+2)\\
		&=-\sum\limits_{e\in E(G)}\left(\sum\limits_{u \in V(G)}\binom{d_G(u)}{2}-3t_G(\triangle)-N_G(e)+2\triangle_G(e)\right)\left(\triangle_G(e)+2\right).
	\end{aligned}
	$$
	The third one is $K_3$ whose contribution to $c_3$ is
	$$\begin{aligned}
		\sum_{K_{3} \in \mathcal{B}_{3}} \varpi(K_{3})
		&=\sum_{K_{3} \in \mathcal{B}_{3}} 2\\
		&=2 \sum_{u \in V(G)}\left(\binom{d_G(u)}{3}-\binom{\triangle_G(u)}{1}\binom{d_G(u)-2}{1}\right)\\
		&=2 \sum_{u \in V(G)}\left(\binom{d_G(u)}{3}-\triangle_G(u)(d_{G}(u)-2)\right).
	\end{aligned}
	$$
	Summing up the previous values, we arrive at the desired result.
\end{proof}

Here is an example.

\begin{examplex}
	The Helmholtzian  of the graph illustrated in Fig.~\ref{Ge9},  is
	$$
	\mathcal{H}(G)=
	\begin{pmatrix}
		2 & -1 & -1 & 0 & 0 & 0 & 0 & 0 & 0 \\
		-1 & 2 & 0 & 1 & 0 & 0 & 0 & 0 & 0 \\
		-1 & 0 & 2 & 1 & 1 & 1 & 1 & 0 & 1 \\
		0 & 1 & 1 & 2 & 1 & 1 & 1 & 0 & 1 \\
		0 & 0 & 1 & 1 & 2 & 1 & 1 & 0 & 1 \\
		0 & 0 & 1 & 1 & 1 & 2 & 1 & 0 & 1 \\
		0 & 0 & 1 & 1 & 1 & 1 & 3 & 0 & 0 \\
		0 & 0 & 0 & 0 & 0 & 0 & 0 & 3 & 0 \\
		0 & 0 & 1 & 1 & 1 & 1 & 0 & 0 & 3
	\end{pmatrix}.
	$$
	The corresponding characteristic polynomial is computed directly:
	$$\phi_{\mathcal{H}}(G,\lambda) = \lambda^9-21\lambda^8+178\lambda^7-802\lambda^6+2105\lambda^5-3293\lambda^4+
	2996\lambda^3-1452\lambda^2+288\lambda.$$
	On the other hand, we compute the first three coefficients by employing the previous corollary. Accordingly,
	$$
	\begin{aligned}
		c_1&=-\sum\limits_{e \in E(G)}\triangle_{G}(e)-2m(G)=-3-18=-21;\\
		c_{2}&=\sum\limits_{e_{i},e_{j}\in E(G)} (\triangle_{G}(e_{i})+2)(\triangle_{G}(e_{j})+2)-\sum_{v \in V(G)}\binom{d_G(v)}{2}+3 t_{G}(\triangle)\\
		&=195-20+3\\
		&=178.
	\end{aligned}
	$$
	For $c_3$, since
	$$
	\begin{aligned}
		c_{31}&=-\sum\limits_{e_u,e_v,e_z \in E(G)}(\triangle_G(e_u)+2)(\triangle_G(e_v)+2)(\triangle_G(e_z)+2)=-1051;\\
		c_{32}&=\sum\limits_{e\in E(G)}\left(\sum\limits_{u \in V(G)}\binom{d_G(u)}{2}-3t_G(\triangle)-N_G(e)+2\triangle_G(e)\right)\left(\triangle_G(e)+2\right)=281;\\
		c_{33}&=-2 \sum_{u \in V(G)}\left(\binom{d_G(u)}{3}-\triangle_G(u)(d_{G}(u)-2)\right)=-32,
	\end{aligned}
	$$
	we have $c_3=-1051+281-32=-802$.
\end{examplex}

In Theorem \ref{thm-2}, we provide a combinatorial expression for the coefficients of the $\mathcal{H}$-polynomial of a graph $G$, but this expression depends on the associated signed graph $\Lambda(G)$. Therefore, we look forward to a more direct expression.

\begin{problem}
Provide a combinatorial interpretation of the coefficients of the $\mathcal{H}$-polynomial of a graph $G$ that depends only on the subgraph structures of $G$.
\end{problem}

\subsection{Graph Products}\label{H49}

Products of graphs (or networks) play an important role in the design and analysis of networks. Their spectra represent a powerful tool revealing the global structural patterns underlying a large-scale complex network of interacting entities, see \cite{ham-imr-kla,mie}. In this section we first consider the join $G_1 \vee G_1$  of two graphs $G_1$ and $G_2$.

\begin{prop}\label{lem-gp-1}
	Let $G_1$ and $G_2$ be  graphs or order $n_1$ and $n_2$, respectively.  Then
	\[\mathcal{H}(G_1 \vee G_2) =
	\left(\begin{array}{ccc}\mathcal{H}(G_1)+n_2I&O&O\\ O&\mathcal{H}(G_2)+n_1I&O\\O&O&I\otimes A(\overline{G}_2)+A(\overline{G}_1)\otimes I+X\end{array}\right),\]
	where $A(\overline{G}_i)$ is the adjacency matrix of the complementary  graph $\overline{G}_i$, and $X$ is the diagonal matrix indexed by $V(G_1)\times V(G_2)$ such that the diagonal entry corresponding to $(u,v)$ is $d_{G_1}(u)+d_{G_2}(v)+2$.
\end{prop}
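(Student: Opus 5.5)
The plan is to read off every entry of $\mathcal{H}(G_1\vee G_2)$ from the combinatorial description in Theorem~\ref{H-def}, after choosing a convenient orientation. The matrix $\mathcal{H}$ itself depends on the orientation, though its spectrum does not \cite[Theorem 2.1]{lu-shi-sta-ww}. So the statement is to be understood as holding for a suitable orientation, or up to the diagonal $\pm1$ similarity induced by reorienting edges. Triangle orientations play no role, since Theorem~\ref{H-def} does not involve them.

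\textbf{Setup.} Write $G=G_1\vee G_2$ and split the edges as
\[
E(G)=E(G_1)\cup E(G_2)\cup E_{12},
\]
where $E_{12}=\{uv\mid u\in V(G_1),\ v\in V(G_2)\}$ is indexed by $V(G_1)\times V(G_2)$. Edges of $E(G_i)$ keep an arbitrary orientation (the one used for $\mathcal{H}(G_i)$). Every edge of $E_{12}$ is oriented $u\rightarrow v$ with $u\in V(G_1)$ and $v\in V(G_2)$.

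\textbf{Step 1: the $E(G_1)$ block.} For $e=xy\in E(G_1)$, the triangles of $G$ through $e$ are those of $G_1$ together with one triangle $xyz$ for each $z\in V(G_2)$. Hence $\triangle_G(e)=\triangle_{G_1}(e)+n_2$. For adjacent $e,e'\in E(G_1)$, the relations $e\leftrightarrow e'$ and $e\overset{\pm}{\sim}e'$ are unchanged. Moreover $e\vartriangle e'$ in $G$ if and only if it holds in $G_1$, because the unique triangle containing two adjacent edges is spanned by their three endpoints, all of which lie in $G_1$. So this block equals $\mathcal{H}(G_1)+n_2I$. The $E(G_2)$ block is symmetric and equals $\mathcal{H}(G_2)+n_1I$. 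The block between $E(G_1)$ and $E(G_2)$ is zero, since these edges are vertex-disjoint.

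\textbf{Step 2: the mixed blocks vanish.} Let $e=uw\in E(G_1)$ and $f=uv\in E_{12}$ share the vertex $u$. Then $w\sim v$ in the join, so $u,w,v$ span a triangle and $e\vartriangle f$. By Theorem~\ref{H-def} the entry is therefore $0$. The same argument applies to $E(G_2)$ versus $E_{12}$, and non-adjacent pairs give $0$ trivially.

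\textbf{Step 3: the $E_{12}$ block.} Take $f=(u,v)$ and $f'=(u',v')$ in $E_{12}$.
\begin{itemize}
\item Diagonal: the triangles through $uv$ are $uvz$ with $z\in N_{G_1}(u)\cup N_{G_2}(v)$, so the diagonal entry is $d_{G_1}(u)+d_{G_2}(v)+2$, which is $X$.
\item Same tail ($u=u'$, $v\neq v'$): both edges leave $u$, so $f\overset{-}{\sim}f'$. The entry is $1$ precisely when $f$ and $f'$ are not in a common triangle, i.e.\ when $vv'\notin E(G_2)$. This gives $I\otimes A(\overline{G}_2)$.
\item Same head ($v=v'$, $u\neq u'$): here $f\overset{+}{\sim}f'$, and the entry is $1$ iff $uu'\notin E(G_1)$. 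This gives $A(\overline{G}_1)\otimes I$.
\item Disjoint edges ($u\neq u'$, $v\neq v'$): the entry is $0$.
\end{itemize}
No pair in this block is of type $e\leftrightarrow e'$, which is exactly why the chosen orientation makes every off-diagonal entry nonnegative here.

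\textbf{Main obstacle.} The only delicate point is this orientation bookkeeping, so that the stated equality is exact rather than holding only up to signature similarity. The rest is a direct case check against Theorem~\ref{H-def}.
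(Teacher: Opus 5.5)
Your proposal is correct and follows essentially the same route as the paper: orient every cross edge from $V(G_1)$ to $V(G_2)$ and check each block entry by entry against Theorem~\ref{H-def}. You are slightly more careful than the paper in two places. You verify that the off-diagonal entries inside the $E(G_1)$ and $E(G_2)$ blocks are unchanged because the common-triangle relation is preserved. You also write the $E_{12}$ block with the complements $\overline{G}_i$, whereas the paper's closing line drops the bars.
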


\begin{proof}
	Set $V(G_1)=\{u_1,u_2,\ldots,u_{n_1}\}$ and $V(G_2)=\{v_1,v_2,\ldots,v_{n_2}\}$. Let $E_1=E(G_1)$, $E_2=E(G_2)$, and $E_{3}=\{u_iv_j\mid 1\le i\le n_1, 1\le j\le n_2\}$. Without loss of generality, we assume that $u_i\rightarrow v_j$ holds for every $u_i\in V(G_1)$ and $v_j\in V(G_2)$. Denote by $X_{ij}$ the block of $\mathcal{H}(G_1 \vee G_2)$ whose rows are indexed by $E_i$ and columns indexed by $E_j$ for $1\le i,j\le 3$.
	
	 For an edge $e\in E_1$, we have $\triangle_{G_1\vee G_2}(e)=\triangle_{G_1}(e)+n_2$. Therefore, we have $X_{11}=\mathcal{H}_1(G)+n_2I$, and similarly $X_{22}=\mathcal{H}_2(G)+n_1I$.
	
	For the edges $e_1\in E_1$ and $e_2\in E_2$, $X_{12}=X_{21}^{\intercal}=O$ holds since $V(e_1)\cap V(e_2)=\emptyset$.
	
	For the edges $e_1=u_iu_j\in E_1$ and $e_3=u_xv_y\in E_{3}$, if $x\not\in\{i,j\}$, then the $(e,e')$-entry of $\mathcal{H}(G_1 \vee G_2)$ is $0$; if $x=i$, then $\{u_i,u_j,v_y\}$ forms a triangle, and thus the $(e,e')$-entry of $\mathcal{H}(G_1 \vee G_2)$ is still $0$. Therefore, $X_{13}=X_{31}^{\intercal}=O$ and similarly $X_{23}=X_{32}^{\intercal}=O$.
	
	For the edges $e=u_iv_j,e'=u_xv_y\in E_3$, if $i=x$ and $j=y$, then it is easy to see that $\triangle_{G_1\nabla G_2}(e)=d_{G_1}(u_i)+d_{G_2}(v_j)$ and the $(e,e)$-entry of $\mathcal{H}(G_1 \vee G_2)$ is $d_{G_1}(u_i)+d_{G_2}(v_j)+2$. If $i\ne x$ and $j\ne y$, then the $(e,e')$-entry of $\mathcal{H}(G_1 \vee G_2)$ is $0$. If $i=x$ and $j\ne y$, then the $(e,e')$-entry is $1$ if and only if $\{u_i,v_j,v_y\}$ does not form a triangle, i.e., $v_j\not\sim v_y$ in $G_2$. If $i\ne x$ and $j=y$, then the $(e,e')$-entry is $1$ if and only if $\{u_i,u_x,v_j\}$ does not form a triangle, i.e., $u_i\not\sim u_x$. Thereby, if $e\ne e'$, then the $(e,e')$-entry of $\mathcal{H}(G_1 \vee G_2)$ is $1$ if and only if $u_i=u_x$ and $v_j\sim v_y$ in $\overline{G}_2$, or $v_j=v_y$ and $u_i\sim u_x$ in $\overline{G}_1$. This leads to $$X_{33}-X=I\otimes A(G_2)+A(G_1)\otimes I,$$ where $X$ is the diagonal matrix indexed by $V(G_1)\times V(G_2)$ with  $(u,v)$-diagonal entry equal to $d_{G_1}(u)+d_{G_2}(v)+2$.
	
	The proof is completed.
\end{proof}

We can say more in case of regular graphs.

\begin{thm}\label{the:join}
	Let $G_i$ be an $r_i$-regular graph of order $n_i$ and size $m_i$ ($1\le i\le 2$). If the $\mathcal{H}$-eigenvalues of $G_i$ are $\lambda_{i1} \ge\lambda_{i2}\ge \cdots \ge \lambda_{im_i}$ and the eigenvalues of the adjacency matrix of $G_i$ are $r_i=\mu_{i1} \ge \mu_{i2} \ge \cdots \ge \mu_{in_i}$, then
	\[{\Sp}_{\mathcal{H}}(G_1 \vee G_2) = \left\{\lambda_{1t_1}+n_2, \lambda_{2t_2}+n_1, n_1+n_2,n_1+r_2-\mu_{2k},n_2+r_1-\mu_{1j},r_1+r_2-\mu_{1j}-\mu_{2k}\right\},\]
	where $1\le t_1\le m_1$,  $1\le t_2\le m_2$, $2\le j\le n_1$, and $2\le k\le n_2$.
\end{thm}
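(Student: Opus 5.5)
The plan is to start from the block-diagonal form of $\mathcal{H}(G_1\vee G_2)$ in Proposition~\ref{lem-gp-1} and read off the spectrum block by block. The first two diagonal blocks $\mathcal{H}(G_1)+n_2I$ and $\mathcal{H}(G_2)+n_1I$ immediately contribute the eigenvalues $\lambda_{1t_1}+n_2$ ($1\le t_1\le m_1$) and $\lambda_{2t_2}+n_1$ ($1\le t_2\le m_2$). It therefore remains to diagonalize the third block
\[
M=I\otimes A(\overline{G}_2)+A(\overline{G}_1)\otimes I+X,
\]
which is indexed by $V(G_1)\times V(G_2)$ and has size $n_1n_2$.

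Regularity makes $X$ scalar: every diagonal entry is $r_1+r_2+2$, so $X=(r_1+r_2+2)I$. Since $G_i$ is $r_i$-regular, $A(\overline{G}_i)=J-I-A(G_i)$. The matrices $A(G_i)$ and $J$ commute for regular graphs. So I choose an orthonormal eigenbasis of $A(G_i)$ consisting of $\mathbf{1}/\sqrt{n_i}$ (eigenvalue $\mu_{i1}=r_i$) together with vectors $\mathbf{y}_{ij}\perp\mathbf{1}$ (eigenvalue $\mu_{ij}$, $j\ge 2$). Then $A(\overline{G}_i)$ has eigenvalue $n_i-1-r_i$ on $\mathbf{1}$ and $-1-\mu_{ij}$ on $\mathbf{y}_{ij}$.

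The Kronecker products of these basis vectors form an eigenbasis of $M$ of size $n_1n_2$. On a vector $\mathbf{a}\otimes\mathbf{b}$ with $A(\overline{G}_1)\mathbf{a}=\alpha\mathbf{a}$ and $A(\overline{G}_2)\mathbf{b}=\beta\mathbf{b}$, the eigenvalue is $\alpha+\beta+r_1+r_2+2$. This gives four families:
\begin{itemize}
\item $\mathbf{1}\otimes\mathbf{1}$: $(n_1-1-r_1)+(n_2-1-r_2)+r_1+r_2+2=n_1+n_2$;
\item $\mathbf{1}\otimes\mathbf{y}_{2k}$: $n_1-1-r_1-1-\mu_{2k}+r_1+r_2+2=n_1+r_2-\mu_{2k}$;
\item $\mathbf{y}_{1j}\otimes\mathbf{1}$: symmetrically, $n_2+r_1-\mu_{1j}$;
\item $\mathbf{y}_{1j}\otimes\mathbf{y}_{2k}$: $-2-\mu_{1j}-\mu_{2k}+r_1+r_2+2=r_1+r_2-\mu_{1j}-\mu_{2k}$.
\end{itemize}
The multiplicities add up correctly: $1+(n_2-1)+(n_1-1)+(n_1-1)(n_2-1)=n_1n_2$. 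Together with the first two blocks, this accounts for all $m_1+m_2+n_1n_2$ eigenvalues.

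The only real subtlety is confirming that $X$ really is scalar, i.e.\ that its diagonal entry $d_{G_1}(u)+d_{G_2}(v)+2$ is constant. This follows directly from regularity. I would also check that the $\pm$ sign conventions in Proposition~\ref{lem-gp-1} give $+A(\overline{G}_i)$ rather than $-A(\overline{G}_i)$. The orientation chosen in that proof makes every off-diagonal entry $+1$, so the signs are as written.
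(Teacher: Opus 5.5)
Your proposal is correct and follows the paper's proof essentially step for step. Both arguments take the block-diagonal form from Proposition~\ref{lem-gp-1}, use regularity to turn $X$ into $(r_1+r_2+2)I$, and read off the remaining spectrum from the four Kronecker-product families $\mathbf{1}\otimes\mathbf{1}$, $\mathbf{1}\otimes\mathbf{y}_{2k}$, $\mathbf{y}_{1j}\otimes\mathbf{1}$, $\mathbf{y}_{1j}\otimes\mathbf{y}_{2k}$, with the eigenvalues you computed. Two points the paper leaves implicit are made explicit in your version: choosing the $\mathbf{y}_{ij}$ orthogonal to $\mathbf{1}$, so that $J\mathbf{y}_{ij}=\mathbf{0}$, and checking that the multiplicities add up to $n_1n_2$.
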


\begin{proof}
	By Proposition \ref{lem-gp-1}, we have
	\begin{equation}\label{eq:joinn}\mathcal{H}(G_1\vee G_2)=\left(\begin{array}{ccc}
		\mathcal{H}(G_1)+n_2I&O&O\\
		O&\mathcal{H}(G_2)+n_1I&O\\
		O&O&I\otimes A(\overline{G}_2)+A(\overline{G}_1)\otimes I+(r_1+r_2+2)I
	\end{array}\right).\end{equation}
	For $1\le i\le 2$, observing that $A(G_i)\mathbf{1}=r_i\mathbf{1}$,  we deduce $A(\overline{G}_i)\mathbf{1}=(n_i-1-r_i)\mathbf{1}$. For $1\le i\le 2$ and $2\le j\le n_i$, assume that ${\bf x}_{ij}$ are orthogonal eigenvectors of $A(G_i)$ corresponding to $\mu_{ij}$, that is $A(G_i){\bf x}_{ij}=\mu_{ij}{\bf x}_{ij}$. Clearly,  $A(\overline{G}_i){\bf x}_{ij} = (-1-\mu{ij}){\bf x}_{ij}$. Hence,
	\[\left\{
	\begin{array}{l}
		(I\otimes A(\overline{G}_2)+A(\overline{G}_1)\otimes I)(\mathbf{1}\otimes \mathbf{1})=(n_1+n_2-2-r_1-r_2)(\mathbf{1}\otimes \mathbf{1}),\\[2mm]
		(I\otimes A(\overline{G}_2)+A(\overline{G}_1)\otimes I)(\mathbf{1}\otimes {\bf x}_{2k})=(n_1-2-r_1-\mu_{2j})(\mathbf{1}\otimes {\bf x}_{2k}),\\[2mm]
		(I\otimes A(\overline{G}_2)+A(\overline{G}_1)\otimes I)({\bf x}_{1j} \otimes \mathbf{1})=(n_2-2-r_2-\mu_{1j})({\bf x}_{1j}\otimes \mathbf{1}),\\[2mm]
		(I\otimes A(\overline{G}_2)+A(\overline{G}_1)\otimes I)({\bf x}_{1j}\otimes {\bf x}_{2k})=(-2-\mu_{1j}-\mu_{2k})({\bf x}_{1j}\otimes {\bf x}_{2k}),\\
	\end{array}
	\right.\]
	which shows that the $\mathcal{H}$-eigenvalues of $I\otimes A(\overline{G}_2)+A(\overline{G}_1)\otimes I+(r_1+r_2+2)I$ are
	\[\left\{n_1+n_2,n_1+r_2-\mu_{2k},n_2+r_1-\mu_{1j},r_1+r_2-\mu_{1j}-\mu_{2k}\right\}.\]
	Therefore, the $\mathcal{H}$-spectrum of $G_1 \vee G_2$ is
	as in the statement formulation.
\end{proof}

The previous proof was based on an explicit computation of the corresponding eigenvectors. In this context, we remark that the same result can be obtained by noting that the summands in the bottom-right matrix of \eqref{eq:joinn} mutually commute, and so they can be simultaneously diagonalized.

We finally consider a more general graph operation usually called the NEPS (an acronym of the {\it non-complete extended $p$-sum}) of graphs (see, for instance, \cite{cve-row-sim-book, cve-doob-sachs-book}). Let
$\mathscr{B} \subseteq  \Z_{2}^{n} \setminus (0,0,\ldots ,0)$. The NEPS of graphs $G_1,G_2,\ldots ,G_n$ with basis $\mathscr{B} $ is
the graph with vertex set $V(G_1)\times V(G_2)\times \cdots \times V(G_n)$, in which two vertices, say $(x_1, x_2, \ldots ,x_n)$ and $(y_1,y_2, \ldots ,y_n)$
are adjacent if and only if there exists an $n$-tuple $\beta =(\beta_1,\beta_2, \ldots ,\beta_n)\in  \mathscr{B}$ such that $x_i =y_i$
whenever $\beta_i =0$, and $x_i$ is adjacent to $y_i$ (in $G_i$) whenever $\beta_i =1$.

The NEPS construction generates many binary graph products (see \cite[Section 2.5]{cve-doob-sachs-book}). In particular, for $n=2$ we have
the following familiar operations:
\begin{itemize}
	\item[(i)]
	the {\it Cartesian product} $G_1 \Box G_2$, when $\mathscr{B} =\{(0,1),(1,0)\}$;
	\item[(ii)]
	the {\it direct product} $G_1 \times G_2$, when $\mathscr{B} =\{(1,1)\}$;
	\item[(iii)]
	the {\it strong product} $G_1 \boxtimes G_2$, when $\mathscr{B} =\{(0,1),(1,0),(1,1)\}$.
\end{itemize}

So, we propose the following problem for further study.

\begin{problem}
Determine	under which conditions the $\mathcal{H}$-eigenvalues of a NEPS of $G_1,G_2,\ldots,G_n$ with basis $\mathcal{B}$ can be obtained. In particular, compute the $\mathcal{H}$-eigenvalues of the products (i)--(iii) in therms of the $\mathcal{H}$-eigenvalues of $G_1$ and $G_2$.
\end{problem}

\subsection{$\mathcal{H}$-integral Graphs}\label{H41}

A graph $G$ is called {\it $\mathcal{H}$-integral} if all the $\mathcal{H}$-eigenvalues are integers. Of course, there is an analogous definition for any other matrix associated with graphs, and there is an intensive study of graph with integral spectrum of the adjacency matrix or the Laplacian matrix. To the best of our knowledge, this topic was opened by Harary and Schwenk in 1974~\cite{har-sch}.

Certain examples of $\mathcal{H}$-integral graphs are met before this subsection, see the complete graphs and generalized windmill graphs in Subsection  \ref{H48}. An other example is obtained by taking two regular graphs with integral spectrum and integral $\mathcal{H}$-spectrum, and then applying the join operation. The resulting graph is $\mathcal{H}$-integral by Theorem~\ref{the:join}. In this regard, we note that many results concerning integral regular graphs can be found in \cite[Chapter~4]{rgasa}.
In what follows we determine some infinite families $\mathcal{H}$-integral graphs.

A {\it complete $k$-partite graph} is a graph whose vertices are partitioned into $k$ sets in such a way that two vertices are adjacent if and only if they do not belong to the same set. This graph is denoted by $K_{n_1,n_2,\ldots,n_k}$, where $n_i$ is the size of the $i$th set.  Let's start with a complete $2$-partite graph $K_{n_1,n_2}$. For later purposes, we note that its Helmholtzian matrix is
	\begin{equation}\label{HKn1n2}
		\mathcal{H}(K_{n_1,n_2})
		=\begin{pmatrix}
			J+I & I & \cdots & I \\
			I & J+I & \cdots & I \\
			\vdots & \vdots & \ddots & \vdots \\
			I & I & \cdots & J+I
		\end{pmatrix}_{n_1n_2\times n_1n_2}.
\end{equation}
On one hand, its Laplacian spectrum is (see \cite{bro-hae-book})
$$
{\Sp}_L(K_{n_1,n_2}) =
\left\{\!\!\begin{array}{cccc}
	n_1+n_2 & n_1 & n_2 & 0 \\
	1 & n_2-1 & n_1-1 & 1
\end{array}
\!\!\right\}.
$$
On the other hand,  the Laplacian matrix  $L(G) = \mathcal{B}^{\intercal}\mathcal{B}$. When $G$ is triangle-free, from \eqref{H=B+C} we obtain $\mathcal{H}(G) = \mathcal{B}\mathcal{B}^{\intercal}$.  Consequently, the $\mathcal{H}$-eigenvalues of $G$ are formed from  its non-zero Laplacian eigenvalues along with $m-n+1$ zero eigenvalues. Thereby,
\begin{equation}\label{HKn1n2-sp}
	{\Sp}_{\mathcal{H}}(K_{n_1,n_2}) =
	\left\{\!\!\begin{array}{cccc}
		n_1+n_2 & n_1 & n_2 & 0 \\
		1 & n_2-1 & n_1-1 & n_1n_2-n_1-n_2+1
	\end{array}
	\!\!\right\}.
\end{equation}

Now, we turn on to the general case. In our approach, the union symbol between multisets refers to the multiset in which the multiplicity of an element is the sum of its multiplicities in the corresponding multisets.

\begin{prop}\label{H-Kn1nk}
	Let $G = K_{n_1,n_2,\ldots,n_k}$ be the complete $k$-partite graph of order $n = \sum_{i=1}^kn_i$. Then $G$ $\mathcal{H}$-integral. Moreover,
	{\footnotesize
		$$
		\begin{aligned}
			&{\Sp}_{\mathcal{H}}(G) =
			\left\{\!\!\begin{array}{cccc}
				n                 & n-n_1        & \cdots & n-n_k       \\
				\frac{k(k-1)}{2} & (k-1)(n_1-1) & \cdots &(k-1)(n_k-1)
			\end{array}
			\!\!\right\} \bigcup\\
			&{\footnotesize
				\left\{\!\!\begin{array}{cccccc}
					n-n_1-n_2        & \cdots & n-n_1-n_k        & n-n_2-n_3       & \cdots & n-n_{k-1}-n_k  \\
					n_1n_2-n_1-n_2+1 & \cdots & n_1n_k-n_1-n_k+1 & n_2n_3-n_2-n_3+1 & \cdots & n_{k-1}n_k-n_{k-1}-n_k
				\end{array}
				\!\!\right\}.}
		\end{aligned}
		$$}
\end{prop}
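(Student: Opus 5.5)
The plan is to show that $\mathcal{H}(K_{n_1,\ldots,n_k})$ is block diagonal, with one block for each pair of parts. Each block will turn out to be a shifted copy of $\mathcal{H}(K_{n_i,n_j})$, whose spectrum is already known from \eqref{HKn1n2-sp}.

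Let $V_1,\ldots,V_k$ be the parts. Since the $\mathcal{H}$-spectrum does not depend on the orientation, I orient every edge between $V_i$ and $V_j$ with $i<j$ from $V_i$ to $V_j$. Let $E_{ij}$ be the set of edges between $V_i$ and $V_j$, so $|E_{ij}|=n_in_j$. I then compute the entries of $\mathcal{H}(G)$ using Theorem~\ref{H-def}.

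\emph{Diagonal entries.} For an edge $uv$ with $u\in V_i$ and $v\in V_j$, the common neighbours of $u$ and $v$ are exactly the vertices outside $V_i\cup V_j$. Hence $\triangle(e)=n-n_i-n_j$, and the diagonal entry is $n-n_i-n_j+2$.

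\emph{Two distinct edges in the same $E_{ij}$ sharing a vertex.} The shared vertex is either the tail of both edges (in $V_i$) or the head of both (in $V_j$). So $e\overset{\pm}{\sim}e'$ holds. The two non-shared endpoints lie in the same part, so they are non-adjacent and $e\not\vartriangle e'$. The entry is therefore $1$.

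\emph{Edges in different classes.} Take $e\in E_{ij}$ and $e'\in E_{il}$ with $l\neq j$ (or any analogous configuration) sharing a vertex. The non-shared endpoints lie in distinct parts, so they are adjacent. Thus $e\vartriangle e'$ and the entry is $0$. Edges whose classes have no common index share no vertex, so their entry is also $0$.

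Consequently
$$\mathcal{H}(G)=\bigoplus_{1\le i<j\le k}\Big(\mathcal{H}(K_{n_i,n_j})+(n-n_i-n_j)I_{n_in_j}\Big).$$
Here I identify each block with the matrix \eqref{HKn1n2}: it has diagonal $2$ and entries $1$ exactly for pairs of edges sharing an endpoint.

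The spectrum then follows from \eqref{HKn1n2-sp} by shifting by $n-n_i-n_j$. Block $(i,j)$ contributes:
\begin{itemize}
\item $n$ with multiplicity $1$;
\item $n-n_j$ with multiplicity $n_j-1$;
\item $n-n_i$ with multiplicity $n_i-1$;
\item $n-n_i-n_j$ with multiplicity $n_in_j-n_i-n_j+1$.
\end{itemize}
Taking the multiset union over all pairs gives the stated spectrum. The eigenvalue $n$ collects multiplicity $\binom{k}{2}$. For a fixed $i$, the eigenvalue $n-n_i$ receives $n_i-1$ from each of the $k-1$ pairs containing $i$, for a total of $(k-1)(n_i-1)$. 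All eigenvalues are integers, so $G$ is $\mathcal{H}$-integral.

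The main point needing care is the orientation and sign check in the third case. It must cover every type of adjacency between different classes: a shared tail, a shared head, and a vertex that is the head of one edge and the tail of the other, as in $E_{ij}$ versus $E_{jl}$. In each case one has to confirm that the third vertex always closes a triangle, so the entry vanishes regardless of the sign of the vertex-incidence term. This is exactly where the multipartite structure is used: two vertices are adjacent precisely when they lie in distinct parts.
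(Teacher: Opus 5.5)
Your proposal is correct and follows essentially the same route as the paper. Both use the orientation from lower- to higher-indexed parts and the case analysis via Theorem~\ref{H-def}, which yields a block-diagonal matrix with blocks $\mathcal{H}(K_{n_i,n_j})+(n-n_i-n_j)I$, and both then shift the spectrum \eqref{HKn1n2-sp}. Your multiplicity bookkeeping also correctly gives $n_in_j-n_i-n_j+1$ for every eigenvalue $n-n_i-n_j$, including the last pair $(k-1,k)$ where the printed statement drops the $+1$.
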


\begin{proof}
	We set $\mathcal{H}(K_{n_1,n_2,\dots,n_k})=\big(h_{ee^{\prime}}\big)$ and denote the vertex set partition by $V=V_1\cup V_2\cup\cdots\cup V_k$ with $|V_i| = n_i$ ($1 \leq i \leq k$). For any two vertices $u \in V_i$ and $v \in V_j$ ($1 \leq i < j \leq k$), we give an orientation such that $u \rightarrow v$.
	If the three endpoints of two adjacent edges $e\sim e^{\prime}$ are located in only two vertex subsets, then $e$ and  $e^{\prime}$ are not in a same triangle which leads to $h_{ee^{\prime}}=1$ by Theorem \ref{H-def}; if  the endpoints are in three vertex subsets, then $e$ and  $e^{\prime}$ belong to a common triangle resulting in $h_{ee^{\prime}}=0$; if $\mathcal{V}(e)\bigcap \mathcal{V}(e^{\prime})=\emptyset$, then $h_{ee^{\prime}}=0$. An edge located between $u \in V_i$ and $v \in V_j$   is contained in $n-n_i-n_j$ triangles  indicating  $h_{ee}=n-n_i-n_j+2$. Therefore, we arrive at a block diagonal matrix
	\begin{equation*}
		\mathcal{H}(K_{n_1,n_2,\ldots,n_k})=
		{
			\begin{pmatrix}
				H_{12} &  &  &  &  &  \\
				& \ddots &  &  &  &  \\
				&  & H_{1k}&  &  &  \\
				&  &  & H_{23} &  &  \\
				&  &  &  & \ddots &  \\
				&  &  &  &  & H_{k-1,k}
			\end{pmatrix},
		}
	\end{equation*}
	where
	$${
		H_{ij}=\begin{pmatrix}
			N & I & \cdots & I \\
			I & N & \cdots & I \\
			\vdots & \vdots & \ddots & \vdots \\
			I & I & \cdots & N
	\end{pmatrix}}_{n_in_j}\!\!\!\!\!\!, \quad N=J+(n-n_i-n_j+1)I, \,\; \text{with}\; N\in\mathbb{R}^{n_j\times n_j}\;\; (1 \leq i < j \leq k).
	$$
	On the basis of \eqref{HKn1n2} and \eqref{HKn1n2-sp}, we obtain
	\begin{equation*}\label{HKn1n2-sp1}
		{\Sp}(H_{ij}) =
		\left\{\!\!\begin{array}{cccc}
			n & n-n_i & n-n_j & n-n_i-n_j \\
			1 & n_i-1 & n_j-1 & n_in_j-n_j-n_j+1
		\end{array}
		\!\!\right\}, \; \mbox{with}\; 1 \leq i < j \leq k.
	\end{equation*}
	This ends the proof.
\end{proof}

We proceed with threshold graphs. A \textit{threshold graph} is a $\{2K_2, P_4, C_4\}$-free graph, i.e., it  does not contain  any of listed graphs as an induced subgraph. According to \cite{mah-pel}, every threshold graph can be constructed from a single vertex
by repeated  the following operations: (1) addition of a single isolated
vertex to the graph; (2) addition of a single dominating vertex to the graph, i.e., a single
vertex that is joined by an edge to all other vertices. If the addition of an isolated
vertex is denoted by 0 and the addition of a dominating vertex by 1, then we say that
$G=G(\mathbf{b})$ is generated by a binary code $\mathbf{b}=(b_1, b_2, \ldots, b_n)$.

We give an iterative  formula for the $\mathcal{H}$-spectrum of a threshold graph.

\begin{prop}\label{prop:thresh}Let $G=G(b_1, b_2, \ldots, b_n)$ be a threshold graph. If $\lambda_1, \lambda_2, \ldots, \lambda_m$ and $\mu_1\geq\mu_2\geq \cdots \geq \mu_n=0$ are the $\mathcal{H}$-eigenvalues and the Laplacian eigenvalues of $G(b_1, b_2, \ldots, b_{n-1})$ respectively, then the following holds:
	\begin{itemize}
		\item[(i)] If $b_n=0$, the $\mathcal{H}$-spectrum of $G$ coincides with the $\mathcal{H}$-spectrum of  $G(b_1, b_2, \ldots, b_{n-1})$;
		
		\item[(ii)] If $b_n=1$, the $\mathcal{H}$-spectrum of $G$ is $\{\lambda_1+1, \lambda_2+1, \ldots, \lambda_m+1, \mu_1+1, \mu_2+1, \ldots, \mu_{n-2}+1, n\}$
	\end{itemize}
\end{prop}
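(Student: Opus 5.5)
Both parts reduce to earlier structural results, so the plan is to identify the right decomposition in each case rather than compute spectra from scratch.

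\emph{Part (i).} If $b_n=0$, then $G$ is $G(b_1,\ldots,b_{n-1})$ with one isolated vertex added. The edge set, the triangle set and every adjacency relation between edges are unchanged. By Theorem~\ref{H-def} the two Helmholtzian matrices are therefore literally equal, and so are their spectra.

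\emph{Part (ii).} If $b_n=1$, write $G' = G(b_1,\ldots,b_{n-1})$, so that $G\cong G'\vee K_1$. I would apply Proposition~\ref{lem-gp-1} with $G_1=G'$ (of order $n-1$) and $G_2=K_1$. Since $E(K_1)=\emptyset$, the middle block is empty and the matrix becomes block diagonal with two blocks:
\begin{itemize}
\item the block $\mathcal{H}(G')+I$, which contributes the eigenvalues $\lambda_1+1,\ldots,\lambda_m+1$;
\item a block indexed by $V(G')\times V(K_1)$, that is, by the $n-1$ new edges $uw$ with $w$ the dominating vertex.
\end{itemize}

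For the second block, the term $I\otimes A(\overline{K_1})$ vanishes. The remaining terms give
\[
A(\overline{G'}) + \diag\bigl(d_{G'}(u)+2\bigr).
\]
Writing $A(\overline{G'})=J-I-A(G')$, this block equals
\[
L(G') + I + J .
\]

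This is the step needing care. By Proposition~\ref{lem-gp-1}, the off-diagonal entry between $uw$ and $u'w$ is $1$ exactly when $uu'\notin E(G')$. Since $G'$ is not assumed regular, I cannot invoke Theorem~\ref{the:join}. Instead I would verify the simplification directly using that $L(G')$ commutes with $J$.
\begin{itemize}
\item Take an orthonormal eigenbasis of $L(G')$ consisting of $\mathbf{1}$ (eigenvalue $\mu_{n-1}=0$) together with vectors orthogonal to $\mathbf{1}$.
\item For each eigenvector orthogonal to $\mathbf{1}$, $J$ acts as zero, so $L(G')+I+J$ has eigenvalue $\mu_i+1$, for $i=1,\ldots,n-2$.
\item On $\mathbf{1}$, the eigenvalue is $0+1+(n-1)=n$.
\end{itemize}

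Combining the two blocks yields exactly the spectrum stated in (ii). The only genuine obstacle is an indexing subtlety: $G'$ has $n-1$ vertices, so its Laplacian eigenvalues are $\mu_1,\ldots,\mu_{n-1}$ with $\mu_{n-1}=0$. The statement's $\mu_n=0$ is then a harmless relabeling. The formula remains correct if $G'$ is disconnected: zero Laplacian eigenvalues other than the one on $\mathbf{1}$ simply contribute eigenvalue $1$, which the list already accounts for.
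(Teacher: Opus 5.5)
Your proof is correct and follows the paper's route: both apply Proposition~\ref{lem-gp-1} with one factor equal to $K_1$, read off the block $\mathcal{H}(G')+I$, and are left with the block $A(\overline{G'})+D(G')+2I$ indexed by the $n-1$ new edges. The only difference is the last step. The paper rewrites this block as $nI-L(\overline{G'})$ and invokes the relation between the Laplacian spectra of a graph and its complement, whereas you rewrite the same matrix as $L(G')+I+J$ and diagonalize it directly via $L(G')\mathbf{1}=\mathbf{0}$ and $J\mathbf{x}=\mathbf{0}$ for $\mathbf{x}\perp\mathbf{1}$. This is self-contained and keeps the indices straight: your reading $\mu_{n-1}=0$ is the right one, and the paper's closing ``$\overline{\mu}_{n-2}=0$'' should likewise be $\overline{\mu}_{n-1}=0$.
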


\begin{proof}
	Part (i) follows directly, as adding an isolated vertex does not affect the $\mathcal{H}$-spectrum.
	
	To consider (ii), we need to apply Proposition~\ref{lem-gp-1}. If $G_1$ of that proposition is an isolated vertex $K_1$ and $G_2$ is a threshold graph generated by  $(b_1, b_2, \ldots, b_{n-1})$, then the top-left block of $\mathcal{H}(G_1\vee G_2)$ reduces to an empty matrix, the middle block gives the $\mathcal{H}$-eigenvalues $\lambda_1+1, \lambda_2+1, \ldots, \lambda_m+1$, whereas the bottom-right block reduces to $A_{\overline{G}_2}+D_{G_2}+2I_{n-1}$, where $D_{G_2}$ is the diagonal matrix of vertex degrees of $G_2$. By computing $D_{G_2}=(n-2)I_{n-1}-D_{\overline{G}_2}$, and substituting $-L(\overline{G}_2)$ for $A_{\overline{G}_2}-D_{\overline{G}_2}$, we arrive at $nI_{n-1}-L(\overline{G}_2)$, which means that the remaining $\mathcal{H}$-eigenvalues are $n-\overline{\mu}_1, n-\overline{\mu}_2, \ldots, n-\overline{\mu}_{n-1}$, where $\overline{\mu}_i~(1\leq i\leq n-1)$ are the  non-increasingly indexed Laplacian eigenvalues of $\overline{G}_2$. However, since the Laplacian eigenvalues of any graph are related to the Laplacian eigenvalues of the complementary graph as $\overline{\mu}_i=n-1-\mu_{n-1-i}$, for $1\leq i\leq n-2$ \cite{mer-LAA}, we arrive the desired result, since $\overline{\mu}_{n-2}=0$.
\end{proof}

\begin{cor}
	Every threshold graph with at least one edge is $\mathcal{H}$-integral.
\end{cor}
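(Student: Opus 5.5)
We argue by induction on the length $n$ of the binary code $\mathbf{b}=(b_1,\ldots,b_n)$, using Proposition~\ref{prop:thresh} as the induction step. The extra ingredient is a classical fact: every threshold graph is Laplacian integral. This follows the same way, because the Laplacian spectrum of $G\cup K_1$ is that of $G$ with one more $0$, and the Laplacian spectrum of $K_1\vee G$ (with $G$ of order $n-1$) is $\{n,\mu_1+1,\ldots,\mu_{n-2}+1,0\}$. The latter is the standard join formula, which can also be derived from complementation as in the proof of Proposition~\ref{prop:thresh}. So we strengthen the claim: for every code $\mathbf{b}$, the graph $G(\mathbf{b})$ has an integral Laplacian spectrum, and, if it has at least one edge, an integral $\mathcal{H}$-spectrum.

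For the base case, $G(b_1)=K_1$ has Laplacian spectrum $\{0\}$ and no edges, so only the Laplacian part is needed. More generally, as long as every $b_i=0$, the graph is edgeless and its Laplacian spectrum consists of zeros.

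For the inductive step, let $G'=G(b_1,\ldots,b_{n-1})$.

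\emph{Case $b_n=0$.} By Proposition~\ref{prop:thresh}(i), the $\mathcal{H}$-spectrum is unchanged, and the Laplacian spectrum only gains a $0$. If $G$ has an edge, then so does $G'$, and the induction hypothesis applies.

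\emph{Case $b_n=1$.} By Proposition~\ref{prop:thresh}(ii), the $\mathcal{H}$-spectrum of $G$ consists of three parts:
\begin{itemize}
\item the values $\lambda_i+1$, where the $\lambda_i$ are the $\mathcal{H}$-eigenvalues of $G'$;
\item the values $\mu_j+1$ for $j\le n-2$, where the $\mu_j$ are the Laplacian eigenvalues of $G'$;
\item the value $n$.
\end{itemize}
The $\lambda_i$ are integers by induction; if $G'$ is edgeless, this list is simply empty. The $\mu_j$ are integers by the Laplacian part of the induction. Hence the whole $\mathcal{H}$-spectrum of $G$ is integral, and the Laplacian spectrum of $G$ is integral by the join formula above.

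The only point that needs care is the Laplacian integrality of $G'$, since Proposition~\ref{prop:thresh} alone does not carry it along. Once the Laplacian statement is folded into the induction as above, the argument closes. Alternatively, one can cite the known result of Merris that threshold graphs are Laplacian integral, with Laplacian spectrum equal to the conjugate of the degree sequence.
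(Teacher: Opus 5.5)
Your proof is correct and follows the paper's argument: iterate Proposition~\ref{prop:thresh}, with integrality of the new eigenvalues $\mu_j+1$ coming from Laplacian integrality of the smaller threshold graph. The only difference is that you prove that Laplacian integrality inside the same induction, via the Laplacian spectra of $G\cup K_1$ and $K_1\vee G$, rather than citing Merris as the paper does; this makes the argument self-contained.
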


\begin{proof}
	This result follows from the previous iterative computation, as  every threshold graph is Laplacian integral~\cite{Mer}.
\end{proof}

We provide an example.

\begin{examplex}
	Let $G$ be the threshold graph generated by $(0, 0, 1, 1, 0, 1)$, and denote by $G_i$ the graph generated by the first $i$ entries, for $3\leq i\leq 5$. Then, by Proposition~\ref{prop:thresh}, the $\mathcal{H}$-eigenvalues of $G_3$ are 3 and 1, as the unique Laplacian eigenvalue of $2K_1$ is 0 with multiplicity 2. Since the Laplacian eigenvalues of $G_3$ are 3, 1 and 0, we deduce that the $\mathcal{H}$-eigenvalues of $G_4$ are 4 with multiplicity 3 and 2  with multiplicity 2. Evidently, $G_5$ has the same $\mathcal{H}$-spectrum, whereas its Laplacian spectrum consists of 4 with multiplicity 2, along with 2 and 0 each with multiplicity~2. This gives
	\begin{equation*}
		{\Sp}(G) =
		\left\{\!\!\begin{array}{cccc}
			6 & 5 & 3 & 1 \\
			1 & 5 & 3 & 1
		\end{array}\right\}.
	\end{equation*}
\end{examplex}

Taking into account Propositions~\ref{H-gwg1}, \ref{H-Kn1nk} and~\ref{prop:thresh}, we obtain a contribution to Subsection~\ref{H47}.

\begin{cor}
The $\mathcal{H}$-nullity of a complete $k$-partite graph $K_{n_1, n_2, \ldots, n_k}$ is zero, unless $k=2$ and  $n_1=1$ or $n_2=1$. The $\mathcal{H}$-nullity of every generalized windmill graph and every threshold graph is zero.
\end{cor}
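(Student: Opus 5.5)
The plan is to read every case off the explicit $\mathcal{H}$-spectra already computed, checking only whether $0$ can occur with positive multiplicity. As a cross-check I will also use the formula $\eta_{\mathcal{H}}(G)=m(G)-n(G)-t_G(\triangle)+w(G)$ of Theorem~\ref{thm-1}.

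\emph{Complete multipartite graphs, $k\ge 3$.} By Proposition~\ref{H-Kn1nk}, the $\mathcal{H}$-eigenvalues of $K_{n_1,\ldots,n_k}$ are $n$, the values $n-n_i$, and the values $n-n_i-n_j$ for $i<j$. When $k\ge3$, each $n-n_i-n_j$ is at least $n_l\ge1$ for any third index $l$. The values $n$ and $n-n_i$ are larger still. So $0$ is not an eigenvalue and $\eta_{\mathcal{H}}=0$. The only step needing care is that Proposition~\ref{H-Kn1nk} lists eigenvalues with multiplicities that may vanish; that does no harm, since we only need every eigenvalue that does occur to be positive.

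\emph{Complete bipartite graphs, $k=2$.} Here \eqref{HKn1n2-sp} shows that the eigenvalue $n-n_1-n_2=0$ occurs with multiplicity $n_1n_2-n_1-n_2+1=(n_1-1)(n_2-1)$. Theorem~\ref{thm-1} gives the same number, since $t=0$. Hence for $k=2$ the nullity is determined exactly, and the exceptional configuration in the statement is the case $k=2$. I would state this explicitly: the nullity is $(n_1-1)(n_2-1)$. This value is zero precisely for stars. I expect this to be the main obstacle, because the exceptional condition must be matched against this formula. The computation shows the genuine exceptions are $k=2$ with $n_1,n_2\ge2$ (for instance $K_{2,2}=C_4$ has nullity $1$). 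So in the write-up I would phrase the exception for $k=2$ through the formula $(n_1-1)(n_2-1)$, rather than claim vanishing beyond what the computation gives.

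\emph{Generalized windmills.} By Proposition~\ref{H-gwg1}, the eigenvalues are $n$, $n_0$, and $n_0+n_i$. Assuming $n_0\ge1$, as the join requires, all of these are positive, so the nullity is zero.

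\emph{Threshold graphs.} I argue by induction along the code $\mathbf{b}$ using Proposition~\ref{prop:thresh}. A $0$-step leaves the $\mathcal{H}$-spectrum unchanged. A $1$-step produces the values $\lambda_i+1$, $\mu_i+1$ and $n$, all at least $1$, because $\lambda_i,\mu_i\ge0$. The first $1$-step applied to an edgeless graph produces a star, whose spectrum $\{n,1,\ldots,1\}$ is positive. Hence every threshold graph with an edge has only positive $\mathcal{H}$-eigenvalues, so its nullity is zero.
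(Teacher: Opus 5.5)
Your proposal is correct and is essentially the paper's own argument: the paper simply reads the nullity off the explicit spectra in Propositions~\ref{H-gwg1}, \ref{H-Kn1nk} and~\ref{prop:thresh}. You are also right that the printed exceptional clause is inverted: for $k=2$ the nullity is $(n_1-1)(n_2-1)$, so stars have nullity zero and the genuine exceptions are $k=2$ with $n_1,n_2\ge 2$ (e.g.\ $C_4$); and you were right to invoke Theorem~\ref{thm-1} only when $t_G(\triangle)=0$, because its proof assumes ${\rm rank}(\mathcal{C})=t_G(\triangle)$, which fails in general (for $K_4$ the formula would give nullity $-1$).
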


By a slight modification of the proof of Proposition~\ref{prop:thresh}, we arrive at the following iterative construction of $\mathcal{H}$-integral graphs.

\begin{prop}The following procedure generates $\mathcal{H}$-integral graphs:
	$$\left\{\begin{array}{ll}
		G_0, \text{a $\mathcal{H}$-integral graph}\\[2mm]
		G_{i+1}\cong K_s\vee (G_i\cup tK_1).
	\end{array}\right.$$
\end{prop}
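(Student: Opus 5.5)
The plan is to argue by induction on $i$, following the proof of Proposition~\ref{prop:thresh}. There $K_1$ was joined to the previous graph; here $K_s$ is joined to $G_i\cup tK_1$, and the Laplacian spectrum has to be carried along together with the $\mathcal{H}$-spectrum. Set $G_2:=G_i\cup tK_1$, of order $n_2=n(G_i)+t$, and apply Proposition~\ref{lem-gp-1} with $G_1=K_s$. This splits $\mathcal{H}(G_{i+1})$ into three diagonal blocks, to be handled one at a time.

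\emph{First block.} It is $\mathcal{H}(K_s)+n_2I$, whose only eigenvalue is $s+n_2$ (Proposition~\ref{cor-f-1} and the remark before it), so it is integral. \emph{Second block.} It is $\mathcal{H}(G_2)+sI$. Isolated vertices do not change the Helmholtzian, so $\mathcal{H}(G_2)=\mathcal{H}(G_i)$, and this block is integral by the induction hypothesis. \emph{Third block.} Since $\overline{K_s}$ is edgeless, $A(\overline{K_s})=O$, and the diagonal entry of $X$ at $(u,v)$ is $(s-1)+d_{G_2}(v)+2$. The block is therefore $I_s\otimes\bigl(A(\overline{G}_2)+D(G_2)+(s+1)I\bigr)$. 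Substituting $D(G_2)=(n_2-1)I-D(\overline{G}_2)$ turns this into $I_s\otimes\bigl((n_2+s)I-L(\overline{G}_2)\bigr)$, whose eigenvalues are $n_2+s-\overline{\mu}$, with $\overline{\mu}$ running over the Laplacian eigenvalues of $\overline{G}_2$, each with multiplicity $s$.

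The main obstacle is this third block. Its integrality needs $\overline{G}_2$ to be Laplacian integral, which by the complement relation $\overline{\mu}_j=n_2-\mu_{n_2-j}$ (together with $0$) is equivalent to $G_2$ being Laplacian integral. $\mathcal{H}$-integrality of $G_i$ alone does not obviously give this. My first approach is to strengthen the induction hypothesis to ``$G_i$ is both $\mathcal{H}$-integral and Laplacian integral'', taking $G_0$ to satisfy both, as in the threshold case, where the base graphs are trivially Laplacian integral. I would read the proposition with this implicit assumption on $G_0$, or else check whether $\mathcal{H}$-integrality forces Laplacian integrality for the graphs actually used as $G_0$.

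The strengthened hypothesis then propagates by the standard facts on the Laplacian spectrum. Adding $t$ isolated vertices only adds $t$ zeros. For a join, $\Sp_L(G\vee H)$ consists of $0$, $n(G)+n(H)$, $\mu_j(G)+n(H)$ for the nonzero-index eigenvalues of $G$, and $\mu_k(H)+n(G)$ likewise for $H$. With $G=K_s$ and $H=G_2$ all of these are integers. Hence $G_{i+1}$ is again both $\mathcal{H}$-integral and Laplacian integral, which completes the induction. As a sanity check, taking $s=1$, $t=0$ should recover Proposition~\ref{prop:thresh}(ii).
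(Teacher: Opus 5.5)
Your block computation is correct, and it takes a different route from the paper. The paper never joins $K_s$ in one step. It obtains $K_s\vee(G_i\cup tK_1)$ by adding $t$ isolated vertices and then $s$ dominating vertices one at a time. At each dominating-vertex step it uses Proposition~\ref{prop:thresh}(ii) for the $\mathcal{H}$-spectrum and the quoted Laplacian result of Merris, with the induction hypothesis ``$G_i$ is simultaneously $\mathcal{H}$-integral and Laplacian integral''.

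You instead apply Proposition~\ref{lem-gp-1} once with $G_1=K_s$ and reduce the third block to $I_s\otimes\bigl((n_2+s)I-L(\overline{G}_2)\bigr)$, which is right. The paper's route is shorter because it recycles Proposition~\ref{prop:thresh}. However, it applies that proposition, stated for threshold graphs, to $G_i\cup tK_1$, which need not be threshold; this works only because its proof is valid for any graph. Your route relies only on Proposition~\ref{lem-gp-1}, which is stated in full generality, and it gives the spectrum in closed form. $\Sp_{\mathcal{H}}(G_{i+1})$ consists of:
$\binom{s+1}{2}$ copies of $n_2+s$;
the values $\lambda+s$ for $\lambda\in\Sp_{\mathcal{H}}(G_i)$;
$s$ copies of $\mu+s$ for each of the $n_2-1$ largest Laplacian eigenvalues $\mu$ of $G_2$.

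The point you left open (whether $G_0$ must also be Laplacian integral) is left implicit in the paper's proof as well. No extra hypothesis is needed, because $\mathcal{H}$-integrality always implies Laplacian integrality. Directly from the definitions, $(\curl\grad f)(i,j,k)=(f(j)-f(i))+(f(k)-f(j))+(f(i)-f(k))=0$, so Lemma~\ref{lem-1} gives $\mathcal{C}\mathcal{B}=O$. Suppose $L(G)\mathbf{x}=\mathcal{B}^{\intercal}\mathcal{B}\mathbf{x}=\mu\mathbf{x}$ with $\mu\neq0$, and put $\mathbf{y}=\mathcal{B}\mathbf{x}$. Then $\mathbf{y}\neq\mathbf{0}$, since $\mathbf{y}^{\intercal}\mathbf{y}=\mu\,\mathbf{x}^{\intercal}\mathbf{x}$. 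Moreover $\mathcal{H}(G)\mathbf{y}=\mathcal{B}(\mathcal{B}^{\intercal}\mathcal{B}\mathbf{x})+\mathcal{C}^{\intercal}(\mathcal{C}\mathcal{B})\mathbf{x}=\mu\mathbf{y}$.

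Hence every nonzero Laplacian eigenvalue of any graph is an $\mathcal{H}$-eigenvalue, which extends the triangle-free remark in Subsection~\ref{H41}. Your third block is therefore integral as soon as $G_i$ is $\mathcal{H}$-integral, and the proposition holds exactly as stated. You can also drop both the strengthened induction hypothesis and the Laplacian spectrum of the join.
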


\begin{proof} If  $G_i$ is simultaneously $\mathcal{H}$-integral and Laplacian integral, so is $G_{i+1}$ since adding of isolated vertices does not affect neither the former nor the latter integrality, and the same holds for adding dominating vertices (for the Laplacian spectrum see \cite{Mer}, and for the $\mathcal{H}$-spectrum see Proposition~\ref{prop:thresh}).
\end{proof}

We leave the following problem for further research.
\begin{problem}
Give some necessary conditions for a graph to be $\mathcal{H}$-integral, or even classify $\mathcal{H}$-integral graphs.
\end{problem}

\section{Further Motivation and Applications}\label{sec:rem}

For a wider context for our research,it is necessary to give a brief overlook on the classical Hodge theory and related disciplines. The classical Hodge theory \cite{Hodge} on  Riemannian manifolds is referred to a `differentiable Hodge theory' that confirms the existence and uniqueness of a harmonic form in every cohomology class. This theory  has been rapidly developed in the domain of topology, functional analysis and  geometry. As pointed out in \cite{Hodge-lim},  Hodge theories on metric spaces \cite{Hodge-metric} and simplicial complexes \cite{Hodge-sim} are recognized as the `continuous Hodge theory' and the `discrete Hodge theory', respectively. Remarkably, Huh, a Fields Medalist, has developed an idea of the Hodge theory in combinatorics \cite{huh-ICM}. Together with several collaborators, he has introduced the Hodge theory of matroids \cite{adi-huh-kat-0} and resolved the Heron-Rota-Welsh Conjecture for matroids \cite{adi-huh-kat}. In particular,  Lim \cite{Hodge-lim} has introduced the notion of the Hodge $k$-Laplacian $\Delta_k$ on graphs, where $0 \leq k \leq \omega$ and $\omega$ is the clique number of graphs. We omit the definition of $\Delta_k$ here as it requires further background (see \cite[Section 4]{Hodge-lim} for details); nevertheless, it serves as the starting point of our paper. In effect,  the graph Laplacian $\Delta_0$ and graph Helholtzian $\Delta_1$ defined in Section 1 are respectively known as the Hodge 0-Laplacian and Hodge 1-Laplacian, the first two lowest-order cases of Hodge Laplacians on graphs. Moreover, it seems that for $k\geq 2$ the graph matrix representation of $\Delta_k$ is not easy to obtain.

Regarding more specific motivations, we know that  the graphs appear as a natural mathematical model of simple networks whose edges  describe pairwise interactions between vertices. While, the (normalized) Laplacian matrix of a graph has been used as a usual pivotal tool in the study of structural and dynamical properties of such networks \cite{chung-bool1,chung-book}. However, as pointed out in \cite{sch-Siam},  ordinary graphs have certain obstacles in modelling many real-world networks such as social networks, brain neural networks, communication networks, or similar.  All these network types are generalized to the so-called {\it simplicial networks} with higher-order structures and interactions \cite{sch-Siam,shi-chen,tah-jad}. Simultaneously, the graph Laplacian is extended to the so-called {\it higher-order combinatorial Laplacian} by Eckmann \cite{eck}. Many details can be found in \cite{duv-kli-mar,hor-jos,kook-rei-sta,sch-Siam,ste-kli}.
Another higher-order generalization of the graph Laplacian is just the aforementioned Hodge Laplacian. It is worth mentioning that, in this model, a simplex is represented by a graph clique (i.e., a set of mutually adjacent vertices).  A Hodge $k$-Laplacian $\Delta_k$ on a graph generates graph matrices indexed by cliques of order $k+1$, where $0 \leq k \leq \omega(G)-1$ and $\omega(G)$ is the clique number of $G$. The Hodge 0-Laplacian gives the famous Laplacian matrix. The Hodge $1$-Laplacian $\Delta_1$ produces the Helmholtzian matrix, the main object we are dealing with in this paper, and both have been applied in networks \cite{chung-appl,muh-ege,sch-Siam} and statistical ranking \cite{jiang-lim}. However, it has attracted only very sporadic attention with respect to the Helmholtzian spectra of graphs.

Note that small-world networks contain abundant triangles \cite{shi-chen,wat-str}, and because the Helmholtzian matrix captures these triangles, its spectrum may prove especially useful in their analysis. Different graph spectra serve distinct roles for different problems. To that end, this paper, together with two earlier ones \cite{li-lu-wang,lu-shi-sta-ww}, provides a framework for developing a spectral theory based on the Helmholtzian matrix. These papers are foundational, and further work is anticipated. We also formulate several research problems that arose during our investigation.

\section*{Declaration of Interest Statement}

The authors declare that they have no known competing financial interests or personal relationships that could
have appeared to influence the work reported in this paper.

\section*{Data availability statement}

There is no associated data.

\section*{Acknowledgements}

Jianfeng Wang  would like to thank Professor  L.-H. Lim for his helpful suggestions, and Shu Li as well as Zhen Chen for their discussions.

Lu Lu is supported by the National Natural Science Foundation of China (No.~12371362). Yongtang Shi is supported by the National Natural Science Foundation of China (No.~11922112). Zoran Stani\'c is supported by the Ministry
of Science, Technological Development, and Innovation of the Republic of Serbia (No.~451-03-136/2025-
03/200104). Jianfeng Wang is supported by the National Natural Science Foundation of China (No.~12371353).  Yi Wang is supported by the National Natural Science Foundation of China (No.~12171002 and 12331012).

%The authors would like to thank the anonymous referees, who made many constructive suggestions that led to numerous improvements on the presentation of this paper.

{}
\end{document}